\newtheoremstyle{mytheoremstyle} 
    {5pt}                    
    {5pt}                    
    {\itshape}                   
    {\parindent}                           
    {\bf}                   
    {.}                          
    {.5em}                       
    {}  
\theoremstyle{mytheoremstyle}
\newtheorem{theorem}{Theorem}[section]
\newtheorem{lemma}[theorem]{Lemma}
\newtheorem{corollary}[theorem]{Corollary}
\newtheoremstyle{mytdefintionstyle} 
    {5pt}                    
    {5pt}                    
    {\rm}                   
    {\parindent}                           
    {\bf}                   
    {.}                          
    {.5em}                       
    {}  
\theoremstyle{remark}
\newtheorem{remark}[theorem]{Remark}
\theoremstyle{mytdefintionstyle}
\newtheorem{definition}[theorem]{Definition}
\newtheorem{example}[theorem]{Example}
\newtheorem{construction}[theorem]{Construction}
\newtheorem*{notationnonumber}{Notation}
\newtheoremstyle{exmp_contd} 
{\topsep} {\topsep}%
{\upshape}
{}
{\bfseries}
{}
{ }
{\thmname{#1}\,\thmnumber{ #2}\thmnote{#3}\enspace(continued)}
\theoremstyle{exmp_contd}
\definecolor{ExQ}{HTML}{0000FF}
\definecolor{Dec}{HTML}{E07B00}
\newcommand{\CapPkg}{\textsc{Cap}\xspace}
\newcommand{\AC}{\mathbf{A}}
\newcommand{\BC}{\mathbf{B}}
\newcommand{\XC}{\mathbf{X}}
\newcommand{\CC}{\mathbf{C}}
\newcommand{\PC}{\mathbf{P}}
\newcommand{\TC}{\mathbf{T}}
\newcommand{\firstproj}{ \begin{bmatrix}1\\0\end{bmatrix} }
\newcommand{\secondproj}{ \begin{bmatrix}0\\1\end{bmatrix} }
\newcommand{\bmatrow}[2]{ \begin{bmatrix}{#1} & {#2} \end{bmatrix} }
\newcommand{\pmatrow}[2]{ \begin{pmatrix}{#1} & {#2} \end{pmatrix} }
\newcommand{\N}{\mathbb{N}}
\newcommand{\Z}{\mathbb{Z}}
\newcommand{\Fr}{\mathrm{Fr}}
\newcommand{\Ab}{\mathbf{Ab}}
\newcommand{\op}{\mathrm{op}}
\newcommand{\Obj}{\mathrm{Obj}}
\newcommand{\Freyd}{\mathcal{A}}
\newcommand{\Xcal}{\mathcal{X}}
\newcommand{\Rcal}{\mathcal{R}}
\newcommand{\Tr}{\mathrm{Tr}}
\newcommand{\id}{\mathrm{id}}
\DeclareMathOperator{\End}{\mathrm{End}}
\DeclareMathOperator{\Hom}{\mathrm{Hom}}
\DeclareMathOperator{\kernel}{\mathrm{ker}}
\DeclareMathOperator{\ProjRes}{\mathrm{ProjRes}}
\DeclareMathOperator{\cokernel}{\mathrm{coker}}
\DeclareMathOperator{\image}{\mathrm{im}}
\DeclareMathOperator{\Tor}{\mathrm{Tor}}
\DeclareMathOperator{\Ext}{\mathrm{Ext}}
\DeclareMathOperator{\B}{\mathbf{B}}
\newcommand{\fpmodl}{\text{-}\mathrm{fpmod}}
\newcommand{\fpmodr}{\mathrm{fpmod}\text{-}}
\newcommand{\fpgrmodl}{\text{-}\mathrm{fpgrmod}}
\newcommand{\fpgrmodr}{\mathrm{fpgrmod}\text{-}}
\newcommand{\Rows}{\mathrm{Rows}}
\newcommand{\Cols}{\mathrm{Cols}}
\newcommand{\grRows}{\mathrm{grRows}}
\newcommand{\grCols}{\mathrm{grCols}}
\newcommand{\fp}{\mathrm{fp}}
\newcommand{\grModl}{\text{-}\mathrm{grMod}}
\newcommand{\grModr}{\mathrm{grMod}\text{-}}
\newcommand{\Modl}{\text{-}\mathrm{Mod}}
\newcommand{\Modr}{\mathrm{Mod}\text{-}}
\newcommand{\GAP}{\texttt{GAP}\xspace}
\newcommand{\KernelEmbedding}{\mathrm{KernelEmbedding}}
\newcommand{\CokernelProjection}{\mathrm{CokernelProjection}}
\newcommand{\operation}{operation\xspace}
\newcommand{\operations}{operations\xspace}
\newcommand{\constructs}{constructs\xspace}
\newcommand{\constructing}{constructing\xspace}
\tikzset{round left paren/.style={ncbar=0.5cm,out=120,in=-120}}
\tikzset{round right paren/.style={ncbar=0.5cm,out=60,in=-60}}
\newcolumntype{C}[1]{>{\centering\arraybackslash$}p{#1}<{$}}
\newlength{\mycolwd}
\definecolor{lightgray}{gray}{0.8}
\newcolumntype{L}{>{\raggedleft}p{0.28\textwidth}}
\newcolumntype{R}{p{0.8\textwidth}}
\definecolor{ctcolor}{gray}{0.95}
\definecolor{ctucolor}{gray}{0.85}
\newcommand{\thickhline}{%
    \noalign {\ifnum 0=`}\fi \hrule height 1pt
    \futurelet \reserved@a \@xhline
}
\newcolumntype{"}{@{\hskip\tabcolsep\vrule width 1pt\hskip\tabcolsep}}
\newlist{theoremenumerate}{enumerate}{1}
\setlist[theoremenumerate]{label=(\arabic{theoremenumeratei}), ref=\thetheorem.(\arabic{theoremenumeratei}),noitemsep}
\author{Sebastian Posur}
\thanks{The author is supported by Deutsche Forschungsgemeinschaft (DFG) grant SFB-TRR 195: \emph{Symbolic Tools in Mathematics and their Application}}
\address{Department of mathematics, University of Siegen, 57068 Siegen, Germany}
\email{\href{mailto:Sebastian Posur <sebastian.posur@uni-siegen.de>}{sebastian.posur@uni-siegen.de}}
\begin{document}

\title[A constructive approach to Freyd categories]{A constructive approach to Freyd categories}

\begin{abstract}
In this paper
we give an algorithmic description of Freyd categories
that subsumes and enhances
the usual approach to finitely presented modules
in computer algebra.
The upshot is
a constructive approach to
finitely presented functors that only relies on a few basic algorithms.
\end{abstract}

\keywords{%
Freyd category,
finitely presented functor,
computable abelian category%
}
\subjclass[2010]{%
18E10, 
18E05, 
18A25, 
18E25, 
16S99
}
\maketitle

\tableofcontents

\section{Introduction}
With this paper we hope to convince the reader that
important parts of category theory such as the theory of Freyd categories
are \emph{inherently algorithmic}.
To an additive category $\PC$, Peter Freyd associated the so-called Freyd category $\mathcal{A}( \PC )$ \cite{FreydRep, BelFredCats}
that equips $\PC$ with cokernels in a universal way.
If we think
of objects and morphisms in Freyd categories as data types,
then theorems like the existence of kernels in $\Freyd( \PC )$
(assuming $\PC$ has weak kernels)
can actually be proven by
providing explicit constructions.
Such constructions
can in turn be directly implemented
in computer algebra projects like \CapPkg (Categories, Algorithms, Programming)
\cite{CAP-project, GutscheDoktor, PosurDoktor} for performing effective computations.
In this paper we provide various important constructions for Freyd categories.

Freyd categories have already played an important hidden role
in computer algebra systems.
A common data structure for finitely presented (left) modules
over a ring $R$ in computer algebra systems like
\textsc{Singular} \cite{singular410}, \textsc{Macaulay2} \cite{M2},
or in a software project like \texttt{homalg} \cite{homalg-project} 
written in \GAP \cite{GAP4}
is given by matrices over $R$,
where an $m \times n$-matrix $M$ is interpreted as the cokernel
of its induced map between free row modules $R^{1 \times m} \stackrel{M}{\longrightarrow} R^{1 \times n}$.
Performing operations like taking kernels
in terms of this data structure
can be seen as 
a special instance
of performing those operations
within a Freyd category,
namely $\Freyd( \Rows_R)$,
the Freyd category associated to the additive category of row modules.
We make this hidden role played by Freyd categories explicit, 
and push it forward to reach new applications for computer algebra like the following computations with finitely presented functors,
i.e., functors that are given as the cokernels of a natural transformation between representable functors:
\begin{itemize}
 \item Determination of sets of natural transformations
between two finitely presented functors like $\Tor_1(M,-)$ and $\Ext^i(A,-)$,
where $M$ and $A$ are suitable $R$-modules (see Subsection \ref{subsection:nat}).
\item Construction of injective resolutions of finitely presented functors (Subsection \ref{subsection:inj}).
\item Deciding whether a given finitely presented functor is left exact (Subsection \ref{subsection:left})
or right exact (Subsection \ref{subsection:right}).
\end{itemize}

This paper is organized as follows.
In Section \ref{section:computability} we explain our constructive approach to category theory.
In Section \ref{section:constructive_freyd_categories} 
we give a constructive proof of the main theorem by Freyd \cite{FreydRep}
in a way such that a direct computer implementation becomes possible:
\begin{center}
Given an additive category $\PC$,
then $\Freyd( \PC )$ is abelian if and only
if $\PC$ has weak kernels. 
\end{center}
To this end, we provide explicit constructions for sufficiently many operations in $\Freyd( \PC )$
like taking (co)kernels and computing lifts/colifts along monomorphisms/epimorphisms.

In Section \ref{section:applications_modules} 
we give several interpretations of Freyd categories.
The Freyd category $\Freyd( \Rows_R )$ provides a model for the category of finitely presented (left) modules over $R$ (see Example \ref{example:fpmod})
and a similar result holds for finitely presented graded modules (see Example \ref{example:fpgrmod}).
Moreover, we characterize so-called (left) computable rings (introduced by Barakat and Lange-Hegermann in \cite{BL})
as those rings $R$ for which $\Freyd( \Rows_R )$ is abelian with decidable equality for morphisms.

Propositions that 
classically can be taken for granted
have to be explicitly realized by
algorithms in our constructive setup.
In Section \ref{section:undecidable}
(and also in the end of Section \ref{section:lifts})
we provide examples of rings with a curious behavior from
a computational point of view:
\begin{itemize}
 \item A ring with decidable equality, but the existence of a particular solution
       of left- and right-sided linear systems are computationally undecidable
       (Subsection \ref{subsection:computable_lift_computable_colift}).
 \item A ring $P$ with decidable equality such that we can find particular solutions of left-sided linear systems,
       but the existence of a particular solution of right-sided linear systems is computationally undecidable
       (Subsection \ref{subsection:computable_lift_undecidable_colift}).
       Such a ring gives us an example of
       an iterated Freyd category $\Freyd(\Freyd( \Rows_P )^{\op})$ that is abelian
       but its equality for morphisms is computationally undecidable (see Subsection \ref{subsection:add_cat_dec_eq_kernels_undec_lifts}).
 \item A commutative ring $R$ that is not coherent (it has a finitely generated ideal that is not finitely presented),
       but we can find particular solutions of linear systems
       and the iterated Freyd category $\Freyd(\Freyd( \Rows_R )^{\op})$ is abelian
       with decidable equality for morphisms (see Theorem \ref{theorem:ring_non_coherent_but_iterated_freyd_computable}).
\end{itemize}

In Section \ref{section:lifts}
we study additive categories $\PC$
equipped with homomorphism structures,
where a homomorphism structure is a 
powerful tool that allows us to
solve linear systems in $\PC$ in the sense of Definition \ref{definition:linear_system}
(see Theorem \ref{theorem:linear_system_in_P} for our solving strategy).
This tool is also used in the determination of
sets of natural transformations
between finitely presented functors,
with which we finally deal in the last Section \ref{section:applications_fpfunctors}.

It is planned to implement
a constructor for the Freyd category
of an additive category
as part of the \CapPkg project.
\CapPkg is a software project written in \GAP and supports the programmer in the implementation of category theory based constructions.
In particular, \CapPkg will make it fairly easy to realize the constructive methods for
finitely presented functors described in Section \ref{section:applications_fpfunctors}
on the computer.
A first \CapPkg-based attempt of such an implementation
building on the ideas of this paper
was done by Bies in \cite{BiesCAPPresentationCategory}
and used in \cite{Bies2017fam} in the context of sheaf cohomology computations over toric varieties.

\begin{notationnonumber} 
We write morphisms between direct sums $A \oplus B \rightarrow C \oplus D$
in additive categories as matrices using the row convention
$\begin{pmatrix}
  \gamma_{AC} & \gamma_{AD} \\
  \gamma_{BC} & \gamma_{BD}
 \end{pmatrix}
$
for morphisms $\gamma_{AC}: A \rightarrow C$, $\gamma_{AD}: A \rightarrow D$,
$\gamma_{BC}: B \rightarrow C$, $\gamma_{BD}: B \rightarrow D$.
We prefer writing $\alpha \cdot \beta: A \rightarrow C$ to $\beta \circ \alpha$
for the composition of morphisms $\alpha: A \rightarrow B$ and $\beta: B \rightarrow C$,
since this matches the row convention in a way that composition of morphisms is simply given by matrix multiplication.
\end{notationnonumber}

\section{Constructive category theory}\label{section:computability}
To present our algorithmic approach to Freyd categories,
we chose the language of constructive
mathematics (see, e.g., \cite{MRRConstructiveAlgebra}).
We did that
for the following reasons:
the language of constructive mathematics
\begin{enumerate}
 \item reveals the algorithmic content of the theory of Freyd categories,
 \item is perfectly suited for describing generic algorithms,
 i.e., constructions not depending on particular choices of data structures,
 \item allows us to express our algorithmic ideas without choosing some particular
 model of computation (like Turing machines),
 \item encompasses classical mathematics,
 i.e., all results stated in constructive mathematics are also valid classically,
 \item does not differ very much from the classical language in our particular setup.
\end{enumerate}
In constructive mathematics
the notions of data types and algorithms (or operations)
are taken as primitives
and every property must have an algorithmic interpretation.
For example given an additive category $\AC$ 
we interpret the property 
\begin{center}
$\AC$ has kernels 
\end{center}
as follows: we have algorithms that compute for given
\begin{itemize}
  \item $A,B \in \Obj_{\AC}$, $\alpha \in \Hom_{\AC}(A,B)$ an object $\kernel( \alpha ) \in \Obj_{\AC}$
        and a morphism 
        \[\KernelEmbedding( \alpha ) \in \Hom_{\AC}(\kernel( \alpha ), A)\]
        for which $\KernelEmbedding( \alpha ) \cdot \alpha = 0$,
  \item $A,B,T \in \Obj_{\AC}$, $\alpha \in \Hom_{\CC}(A,B)$, $\tau \in \Hom_{\AC}(T,A)$
        such that $\tau \cdot \alpha = 0$
        a morphism $u \in \Hom_{\AC}(T,\kernel( \alpha ))$ 
        such that 
        \[ u \cdot \KernelEmbedding( \alpha ) = \tau,\]
        where $u$ is uniquely determined (up to $=$) by this property.
\end{itemize}
Another important example is given by \emph{decidable equality},
where we interpret the property that for all objects $A,B \in \AC$, we have
\[
 \forall \alpha, \beta \in \Hom_{\AC}(A,B): (\alpha = \beta) \vee (\alpha \neq \beta)
\]
as follows: we are given an algorithm
that decides or disproves equality of a given pair of morphisms.
In the appendix \ref{appendixa}
we enlisted a constructive interpretation
of various kinds of categories, e.g., additive or abelian categories
(cf.~the corresponding list in \cite[Appendix B]{BL_GabrielMorphisms}
and note the difference in our treatment of equalities for morphisms explained in Remark \ref{remark:treatment_of_equality}).

On the other hand, we allow ourselves to work classically
whenever we interpret Freyd categories
in terms of finitely presented functors
(this happens in Section \ref{section:applications_fpfunctors}).
The reason for this is pragmatic:
we want to demonstrate the usefulness of having Freyd categories
computationally available, and we believe that this can be done by 
interpreting Freyd categories in terms of other categories that classical mathematicians care about.

\section{Constructive Freyd categories}\label{section:constructive_freyd_categories}
We recall the definition of a Freyd category.

\begin{definition}
 Let $\PC$ be an additive category.
 The \textbf{Freyd category} $\mathcal{A}(\PC)$ is given 
 by the following data:
 \begin{enumerate}
  \item An object in $\mathcal{A}(\PC)$ is simply a 
  morphism in $\PC$.
  We will write such an object as $(A \stackrel{\rho_A}{\longleftarrow} R_A)$, even though $R_A$ and $\rho_A$
  do not formally depend\footnote{The symbols $R_A$ and $\rho_A$ stand for \emph{relations for $A$}. Thus,
  we think of a morphism $(A \stackrel{\rho_A}{\longleftarrow} R_A)$ as an additional datum for its range $A$.} on $A$.
  \item A morphism in $\mathcal{A}(\PC)$ from $(A \stackrel{\rho_A}{\longleftarrow} R_A)$ to $(B \stackrel{\rho_B}{\longleftarrow} R_B)$ 
        is given by a morphism 
        $A \stackrel{\alpha}{\longrightarrow} B$ in $\PC$
        such that there exists another morphism $R_A \stackrel{\rho_{\alpha}}{\longrightarrow} R_B$  
        rendering the diagram
        \begin{center}
         \begin{tikzpicture}[label/.style={postaction={
          decorate,
          decoration={markings, mark=at position .5 with \node #1;}},
          mylabel/.style={thick, draw=none, align=center, minimum width=0.5cm, minimum height=0.5cm,fill=white}}]
          \coordinate (r) at (2.5,0);
          \coordinate (u) at (0,2);
          \node (A) {$A$};
          \node (B) at ($(A)+(r)$) {$R_A$};
          \node (C) at ($(A) - (u)$) {$B$};
          \node (D) at ($(B) - (u)$) {$R_B$};
          \draw[->,thick] (B) --node[above]{$\rho_A$} (A);
          \draw[->,thick] (D) --node[above]{$\rho_B$} (C);
          \draw[->,thick] (A) --node[left]{$\alpha$} (C);
          \draw[->,thick,dashed] (B) --node[right]{$\rho_{\alpha}$} (D);
          \end{tikzpicture}
        \end{center}
        commutative. 
        We call $\alpha$ the \textbf{morphism datum} and any such $\rho_{\alpha}$ a \textbf{morphism witness}.
        We often write $\{\alpha, \rho_{\alpha}\}$ for a morphism in $\mathcal{A}(\PC)$ 
        to highlight a particular choice of a morphism witness $\rho_{\alpha}$ for a morphism datum $\alpha$.
        We define two morphisms
        $A \stackrel{\alpha}{\longrightarrow} B$, $A \stackrel{\alpha'}{\longrightarrow} B$ 
        from $(A \stackrel{\rho_A}{\longleftarrow} R_A)$ to $(B \stackrel{\rho_B}{\longleftarrow} R_B)$
        to be \textbf{equal (in $\Freyd(\PC)$)} if there exists a lift $\lambda$ of the following
        diagram:
        \begin{center}
        \begin{tikzpicture}[label/.style={postaction={
          decorate,
          decoration={markings, mark=at position .5 with \node #1;}},
          mylabel/.style={thick, draw=none, align=center, minimum width=0.5cm, minimum height=0.5cm,fill=white}}]
          \coordinate (r) at (3.5,0);
          \coordinate (u) at (0,2);
          \node (m) {$B$};
          \node (n) at ($(m)+(r)$) {$R_B$.};
          \node (r1) at ($(m) + (u)$) {$A$};
          \draw[->,thick] (n) --node[above]{$\rho_B$} (m);
          \draw[->,thick,dashed] (r1) --node[above]{$\lambda$} (n);
          \draw[->,thick] (r1) --node[left]{$\alpha - \alpha'$} (m);
    \end{tikzpicture}
  \end{center}
  We call any such $\lambda$ a \textbf{witness for $\alpha$ and $\alpha'$ being equal}.
  \item Composition and identities are directly inherited from $\PC$.
 \end{enumerate}
\end{definition}

\begin{remark}
Being equal in $\Freyd( \PC )$ clearly defines an equivalence relation (transitivity corresponds to addition of witnesses)
that is compatible with composition.
Furthermore, $\Freyd( \PC )$ inherits the structure of an additive category
from the category of arrows of $\PC$.
\end{remark}

\begin{remark}\label{remark:constructiveness}
 Since we are interested in a constructive approach to Freyd
 categories, we say a few words about witnesses.
 Classically, morphisms in the Freyd category
 are the equivalence classes of ``being equal in $\Freyd( \PC )$''.
 For us, however, it is more convenient to use the language
 of morphism data and witnesses
 since these are the actual entities for which
 we define our algorithms (see Constructions \ref{construction:cokernel}, \ref{construction:kernel}, \ref{construction:lift_along_monos}, \ref{construction:colift_along_epis}).
 In particular, we avoid unnecessary picking of representatives
 in the beginning of all these constructions.
\end{remark}

Stating the main theorem for $\mathcal{A}(\PC)$
requires another definition.

\begin{definition}\label{definition:weak_kernels}
 Let $\PC$ be an additive category. For a morphism $\alpha: A \rightarrow B$ in $\PC$,
 a \textbf{weak kernel} consists the following data:
 \begin{theoremenumerate}
  \item An object $K \in \PC$.
  \item A morphism $\kappa: K \rightarrow A$ such that $\kappa \cdot \alpha  = 0$.
  \item An \operation that \constructs for $T \in \PC$ and test morphism $\tau: T \rightarrow A$ such that $\tau \cdot \alpha = 0$
        a morphism $u(\tau): T \rightarrow K$ with $u(\tau) \cdot \kappa = \tau$.\label{definition:weak_kernels_3} 
        \begin{center}
   \begin{tikzpicture}[label/.style={postaction={
          decorate,
          decoration={markings, mark=at position .5 with \node #1;}},
          mylabel/.style={thick, draw=none, align=center, minimum width=0.5cm, minimum height=0.5cm,fill=white}}]
          \coordinate (r) at (2.5,0);
          \coordinate (u) at (0,1.5);
          \node (K) {$K$};
          \node (A) at ($(K)+(r) - 0.5*(u)$) {$A$};
          \node (T) at ($(K) - (u)$) {$T$};
          \draw[->,thick] (K) --node[above]{$\kappa$} (A);
          \draw[->,thick,dashed] (T) --node[left]{$u(\tau)$} (K);
          \draw[->,thick] (T) --node[below]{$\tau$} (A);
    \end{tikzpicture}
  \end{center}
 \end{theoremenumerate}
 Note that the morphism $u(\tau)$ \emph{does not have to be unique} with this property,
 so we really weaken the usual definition of a kernel.
 Furthermore, we say $\PC$ \textbf{has weak kernels}
 if it comes equipped with an \operation \constructing the triple $(K,\kappa,u)$
 for given $\alpha$.
\end{definition}

The following main theorem about $\Freyd( \PC )$ is due to Freyd \cite{FreydRep}.
\begin{theorem}\label{theorem:freyd}
 Let $\PC$ be an additive category.
 Then $\mathcal{A}(\PC)$ is an abelian category
 if and only if $\PC$ has weak kernels.
\end{theorem}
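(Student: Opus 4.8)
The plan is to prove both implications by explicit construction, which is exactly what the constructive setting demands. For the ``only if'' direction, suppose $\Freyd(\PC)$ is abelian. Abelian categories have kernels, so given any morphism $\alpha: A \rightarrow B$ in $\PC$, consider it as a morphism datum between the zero objects $(A \stackrel{0}{\longleftarrow} 0)$ and $(B \stackrel{0}{\longleftarrow} 0)$ of $\Freyd(\PC)$. Taking the kernel of $\{\alpha, \rho_\alpha\}$ in $\Freyd(\PC)$ yields an object $(K \stackrel{\rho_K}{\longleftarrow} R_K)$ together with an embedding whose datum $\kappa: K \rightarrow A$ satisfies $\kappa \cdot \alpha = 0$ (since composing with the zero relation, equality in $\Freyd(\PC)$ forces $\kappa\cdot\alpha$ to factor through $0$, hence is zero in $\PC$). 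Given a test morphism $\tau: T \rightarrow A$ in $\PC$ with $\tau \cdot \alpha = 0$, view $\tau$ as a morphism $(T \stackrel{0}{\longleftarrow} 0) \rightarrow (A \stackrel{0}{\longleftarrow} 0)$ in $\Freyd(\PC)$ whose composite with $\alpha$ vanishes; the universal property of the kernel in $\Freyd(\PC)$ produces $u: T \rightarrow K$ with $u \cdot \kappa = \tau$ in $\PC$ (one has to push the equality in $\Freyd(\PC)$, which a priori only says $u\cdot\kappa - \tau$ factors through $\rho_A = 0$, down to a genuine equality in $\PC$, which it does because the relations are zero). Since kernels in an abelian category are provided by an operation, we obtain the operation constructing $(K, \kappa, u)$, i.e.\ $\PC$ has weak kernels.

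For the ``if'' direction --- the substantial half --- assume $\PC$ has weak kernels and build all the data witnessing that $\Freyd(\PC)$ is abelian. The additive structure is already in place by the Remark following the definition, so what remains is: (i) a zero object, which is $(0 \stackrel{0}{\longleftarrow} 0)$; (ii) cokernels; (iii) kernels; (iv) the statement that every monomorphism is a kernel and every epimorphism is a cokernel, which in the constructive formulation of an abelian category (Appendix \ref{appendixa}) amounts to operations computing lifts along monomorphisms and colifts along epimorphisms. The cokernel of $\{\alpha, \rho_\alpha\}: (A \stackrel{\rho_A}{\longleftarrow} R_A) \rightarrow (B \stackrel{\rho_B}{\longleftarrow} R_B)$ is simply $(B \stackrel{[\rho_B\ \alpha]}{\longleftarrow} R_B \oplus A)$ with cokernel projection given by the datum $\id_B$; this needs no weak kernels at all, and its universal property is a direct diagram chase using the witness language. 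The kernel is where weak kernels enter: the kernel object should be $(K \stackrel{\rho_K}{\longleftarrow} R_K)$ where $\kappa: K \rightarrow A$ is a weak kernel of the composite $A \stackrel{\alpha}{\longrightarrow} B \stackrel{\rho_B\text{-colift datum}}{\longrightarrow} \cdots$; more precisely one takes a weak kernel of the morphism $A \oplus R_B \xrightarrow{\begin{pmatrix}\alpha\\-\rho_B\end{pmatrix}} B$ (or the analogous combination encoding ``$\alpha$ modulo the relations of $B$''), extracts the component landing in $A$ as $\kappa$, and then takes $\rho_K$ to be a weak kernel of $\kappa \cdot \rho_A^{?}$ pulled back appropriately --- i.e.\ $R_K$ is constructed by applying the weak-kernel operation once more to witness that $\kappa$ is a morphism datum into $(A \stackrel{\rho_A}{\longleftarrow} R_A)$ and that relations are respected. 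The kernel embedding has datum $\kappa$, and its universal property (existence and uniqueness up to $=$ in $\Freyd(\PC)$ of the factoring morphism) is verified by repeatedly invoking the weak-kernel lifting operation to produce the required morphism and the required equality witness. Finally, lifts along monomorphisms and colifts along epimorphisms are obtained by combining the kernel and cokernel constructions with one more application of the weak-kernel operation, exactly as in the forthcoming Constructions \ref{construction:lift_along_monos} and \ref{construction:colift_along_epis}.

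I expect the main obstacle to be the kernel construction together with the verification of its universal property in the constructive, witness-carrying formulation: one must not merely exhibit the kernel object and embedding, but also supply the \emph{operation} that, given a test morphism into $A$ whose composite with $\{\alpha,\rho_\alpha\}$ is zero \emph{in $\Freyd(\PC)$} (i.e.\ comes with an equality witness $\lambda$), produces the factoring morphism into the kernel object \emph{together with} a morphism witness for it, and then show this factoring morphism is unique up to equality in $\Freyd(\PC)$ by producing the corresponding equality witness. Each of these steps consumes the weak-kernel lifting operation, and the bookkeeping of which morphism is fed to which weak kernel --- and of assembling the resulting non-unique lifts into the data of a morphism in $\Freyd(\PC)$ --- is the delicate part; the non-uniqueness of weak-kernel lifts is precisely why the construction must be done at the level of data and witnesses rather than equivalence classes, as anticipated in Remark \ref{remark:constructiveness}. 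Everything else (zero object, cokernel, additive compatibility, the easy ``only if'' direction) is routine diagram chasing in the arrow category modulo the equality relation.
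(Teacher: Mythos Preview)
Your proposal follows essentially the same approach as the paper: the ``only if'' direction via embedding $\PC$ as objects $(A\longleftarrow 0)$ and reading off weak kernels from kernels in $\Freyd(\PC)$ is exactly the paper's argument, and for the ``if'' direction the paper likewise assembles Constructions \ref{construction:cokernel}, \ref{construction:kernel}, \ref{construction:lift_along_monos}, \ref{construction:colift_along_epis}. The one place where your sketch is imprecise is the kernel object: rather than a weak kernel of some composite ``$\kappa\cdot\rho_A^{?}$'', the paper takes the iterated weak pullback $\big(R_B\times_B A \stackrel{\firstproj}{\longleftarrow} (R_B\times_B A)\times_A R_A\big)$, i.e.\ after forming $K = R_B\times_B A$ (your weak kernel of $\pmatcol{\alpha}{-\rho_B}$) one pulls back the projection $K\to A$ against $\rho_A$ to obtain $R_K$ --- so your instinct that two weak-kernel applications are needed is right, and only the target of the second one needed sharpening.
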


Freyd gives two proofs of his theorem:
a non-elementary one using functor categories
\cite[Proposition 1.4]{FreydRep}
and a more elementary one using an auxiliary theorem
for recognizing abelian categories via the existence
of factorizations of morphisms into cokernel projections
and kernel embeddings \cite[Section 3]{FreydRep}.
Our goal is to render Theorem \ref{theorem:freyd} constructive
in a way such that a direct computer implementation becomes possible.
To this end, we will state and prove the explicit construction steps
for (co)kernels (and their universal properties) in $\mathcal{A}(\PC)$, as well as
for lifts along monomorphisms and colifts along epimorphisms.
Our explicitness reveals what is otherwise
hidden in propositions, namely the important role of witnesses
for the construction of morphism data.

\subsection{Cokernels in Freyd categories}
We start with the construction of cokernels.

\begin{construction}[Cokernels]\label{construction:cokernel}
 Given a morphism
 \[
  \{\alpha, \rho_{\alpha}\}: (A \stackrel{\rho_A}{\longleftarrow} R_A) \longrightarrow (B \stackrel{\rho_B}{\longleftarrow} R_B)
 \]
 in $\mathcal{A}(\PC)$, the following diagram
 shows us how to construct its cokernel projection along with the universal property:
 \begin{center}
    \begin{tikzpicture}[label/.style={postaction={
          decorate,
          decoration={markings, mark=at position .5 with \node #1;}},
          mylabel/.style={thick, draw=none, align=center, minimum width=0.5cm, minimum height=0.5cm,fill=white}}]
          \coordinate (r) at (6,0);
          \coordinate (u) at (0,2);
          \node (A) {$(A \stackrel{\rho_A}{\longleftarrow} R_A)$};
          \node (B) at ($(A)+0.8*(r)$) {$(B \stackrel{\rho_B}{\longleftarrow} R_B)$};
          \node (C) at ($(B) + (r) + (u)$) {$(B \stackrel{\begin{pmatrix}\rho_B \\ \alpha\end{pmatrix}}{\longleftarrow} R_B \oplus A)$};
          \node (T) at ($(B) + (r) - (u)$) {$(T \stackrel{\rho_T}{\longleftarrow} R_T)$.};
          \draw[->,thick] (A) --node[above]{$\{\alpha, \rho_{\alpha}\}$} (B);
          \draw[->,thick] (B) --node[below,yshift=-0.1em,xshift=-1em]{$\{\tau, \rho_{\tau}\}$} (T);
          \draw[->,thick] (B) --node[above,xshift=-1.5em,yshift=0.5em]{$\{\id_B, \begin{pmatrix}\id_{R_B} & 0\end{pmatrix}\}$} (C);
          \draw[->,thick, dashed] (C) --node[right]{$\{\tau, \begin{pmatrix}\rho_{\tau} \\ \sigma \end{pmatrix}\}$} (T);
          \node (Ae) at ($(A)+(-0.75,-0.15)$) {};
          \node (Te) at ($(T)-(-0.4,0.15)$) {};
          \draw[bend right,->,thick,dotted,label={[above]{$\sigma$}},out=360-25,in=360-155] (Ae) to (Te);
    \end{tikzpicture}
 \end{center}
 How to read this diagram: the solid arrow pointing up right is the cokernel projection,
 the solid arrow pointing down right is a test morphism for the universal property of the cokernel,
 and the dashed arrow pointing straight down is the morphism induced by the universal property.
 The dotted arrow labeled with $\sigma$ is a witness for the composition $\{\alpha, \rho_\alpha\} \cdot \{\tau,\rho_{\tau}\}$ being zero,
 i.e., it denotes a morphism $\sigma: A \rightarrow R_T$ such that $\sigma \cdot \rho_T = \alpha \cdot \tau$.
 We see that $\sigma$ is used in the construction of the morphism witness for the induced morphism,
 but not in any morphism data.
 We also see that no morphism witness is used for the construction of any morphism data.
\end{construction}
\begin{proof}[Correctness of the construction]
 It is an easy calculation that all morphisms are well-defined.
 A witness for the composition $\{\alpha, \rho_\alpha\} \cdot \{\id_B, \begin{pmatrix}\id_{R_B} & 0\end{pmatrix}\}$ being zero
 is given by the natural inclusion $A \hookrightarrow R_B \oplus A$.
 The commutativity of the triangle even holds strictly in the category of arrows of $\PC$.
 For the uniqueness of the induced morphism, it suffices to see that our construction of
 the cokernel projection is an epimorphism, which follows from the next Lemma \ref{lemma:epis}.
\end{proof}

\begin{lemma}\label{lemma:epis}
 Every morphism in $\mathcal{A}(\PC)$ of the form
 \[
  \{\id_A, \rho\}: (A \stackrel{\rho_1}{\longleftarrow} R_1) \longrightarrow (A \stackrel{\rho_2}{\longleftarrow} R_2)
 \]
 is an epimorphism.
\end{lemma}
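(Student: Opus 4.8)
The plan is to show directly that a morphism $\{\id_A, \rho\}: (A \xleftarrow{\rho_1} R_1) \to (A \xleftarrow{\rho_2} R_2)$ is right-cancellable. So suppose we are given two morphisms $\{\beta, \rho_\beta\}, \{\beta', \rho_{\beta'}\}: (A \xleftarrow{\rho_2} R_2) \to (C \xleftarrow{\rho_C} R_C)$ in $\Freyd(\PC)$ with
\[
 \{\id_A, \rho\} \cdot \{\beta, \rho_\beta\} = \{\id_A, \rho\} \cdot \{\beta', \rho_{\beta'}\}.
\]
Since composition in $\Freyd(\PC)$ is inherited from $\PC$, the left-hand side has morphism datum $\id_A \cdot \beta = \beta$ and the right-hand side has morphism datum $\beta'$. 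By the definition of equality in $\Freyd(\PC)$, this composed equality means there is a witness $\lambda: A \to R_C$ with $\lambda \cdot \rho_C = \beta - \beta'$ — but here the source of $\lambda$ is the range $A$ of the \emph{source object} $(A \xleftarrow{\rho_1} R_1)$ of the composite. The key observation is that this is \emph{exactly} the condition required to witness $\{\beta, \rho_\beta\} = \{\beta', \rho_{\beta'}\}$ as morphisms out of $(A \xleftarrow{\rho_2} R_2)$, since the equality condition for those two morphisms asks for a lift $\lambda': A \to R_C$ with $\lambda' \cdot \rho_C = \beta - \beta'$ — the same equation, because both source objects have the same range $A$. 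So the very same $\lambda$ serves as the witness, and $\{\beta, \rho_\beta\} = \{\beta', \rho_{\beta'}\}$; hence $\{\id_A, \rho\}$ is an epimorphism.

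Put differently: the condition ``$\{\beta,\rho_\beta\} = \{\beta',\rho_{\beta'}\}$'' depends only on the range $A$ of the common source object (and on $\rho_C$), not on the relations $\rho_1$ or $\rho_2$; precomposing with a morphism whose morphism datum is $\id_A$ does not change the morphism datum and does not change the range, so it cannot turn a non-equality into an equality. I would phrase the proof in this constructive style, matching Remark \ref{remark:constructiveness}: given the witness $\lambda$ for the equality of the two composites, I would simply point to $\lambda$ itself as the witness for the equality of the two factors — no new construction is needed.

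There is essentially no hard part here; the only thing to be careful about is bookkeeping of which object's relation module is relevant in the definition of equality. The point worth stressing — and the reason this lemma is invoked in Construction \ref{construction:cokernel} — is that the source object $(A \xleftarrow{\rho_1} R_1)$ could have fewer or more relations than $(A \xleftarrow{\rho_2} R_2)$, yet this is irrelevant: equality of morphisms in $\Freyd(\PC)$ is detected on the \emph{target side} only, so enlarging or shrinking relations on the source (which is precisely what the cokernel projection $\{\id_B, \pmatrow{\id_{R_B}}{0}\}$ does, adding the relations coming from $\alpha$) never obstructs cancellation. I would not separate out any auxiliary claim; the whole argument is two or three lines once the definition of equality is unwound.
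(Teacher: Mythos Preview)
Your proof is correct and follows essentially the same approach as the paper: the key observation in both is that the witness $\lambda: A \to R_C$ for the equality (or vanishing) of the composite is \emph{literally} the witness needed for the equality (or vanishing) of the factor, because the definition of equality in $\Freyd(\PC)$ depends only on the range $A$ of the source object and not on its relation morphism. The only cosmetic difference is that the paper uses the additive shortcut of testing against a single morphism $\{\beta,\rho_\beta\}$ with $\{\id_A,\rho\}\cdot\{\beta,\rho_\beta\}=0$, whereas you phrase it as cancellation of two arbitrary morphisms; after subtraction these are the same argument.
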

\begin{proof}
 See also \cite[first part of the proof of Lemma 3.2.1]{FreydRep}.
 Given a test morphism for $\{\id_A, \rho\}$ being an epimorphism, i.e., a morphism
 \[
 \{\beta, \rho_{\beta}\}: (A \stackrel{\rho_2}{\longleftarrow} R_2) \longrightarrow (B \stackrel{\rho_B}{\longleftarrow} R_B)
 \]
 such that $\{\id_A, \rho\} \cdot \{\beta, \rho_{\beta}\} = 0$,
 we can take any witness $\sigma: A \rightarrow R_B$ for this composition being zero
 as a witness for $\{\beta, \rho_{\beta}\}$ being zero.
\end{proof}

\subsection{Kernels in Freyd categories}

For the construction of kernels in $\mathcal{A}(\PC)$,
we first introduce weak pullbacks in $\PC$
(mainly for introducing our notation).

\begin{definition}\label{definition:weak_pullbacks}
 Let $\PC$ be an additive category. For a given cospan $A \stackrel{\alpha}{\longrightarrow} B \stackrel{\gamma}{\longleftarrow} C$ in $\PC$,
 a \textbf{weak pullback} consists the following data:
 \begin{enumerate}
  \item An object $A \times_B C \in \PC$.
  \item Morphisms $\firstproj: A \times_B C \rightarrow A$ and $\secondproj: A \times_B C \rightarrow C$
        such that $\firstproj \cdot \alpha = \secondproj \cdot \gamma$.
  \item An \operation that \constructs for $T \in \PC$ and morphisms $p: T \rightarrow A$, $q: T \rightarrow C$ such that $p \cdot \alpha = q \cdot \gamma$
        a morphism $\bmatrow{p}{q}: T \rightarrow A \times_B C$ satisfying
        \begin{center}
        \begin{tabular}{ccccc}
         $p = \bmatrow{p}{q} \cdot \firstproj$
         & & and & &
         $q = \bmatrow{p}{q} \cdot \secondproj$.
        \end{tabular} 
        \end{center}
 \end{enumerate}
 Note that the morphism $\bmatrow{p}{q}$ \emph{does not have to be unique} with this property,
 so we really weaken the usual definition of a pullback.
 Also note that we use \emph{angular} matrices for weak pullback morphisms (in contrast to round matrices for direct sum morphisms).
\end{definition}

\begin{remark}
 If $\PC$ has weak kernels, then it also has weak pullbacks, since we can compute weak pullbacks
 from weak kernels and direct sums
 in the same way as we can compute pullbacks from kernels and direct sums.
\end{remark}

\begin{construction}[Kernels]\label{construction:kernel}
 Given a morphism
 \[
  \{\alpha, \rho_{\alpha}\}: (A \stackrel{\rho_A}{\longleftarrow} R_A) \longrightarrow (B \stackrel{\rho_B}{\longleftarrow} R_B)
 \]
 in $\mathcal{A}(\PC)$, the following diagram
 shows us how to construct its kernel embedding along with the universal property:
 \begin{center}
    \begin{tikzpicture}[label/.style={postaction={
          decorate,
          decoration={markings, mark=at position .5 with \node #1;}},
          mylabel/.style={thick, draw=black, align=center, minimum width=0.5cm, minimum height=0.5cm,fill=white}}]
          \coordinate (r) at (5.5,0);
          \coordinate (u) at (0,2);
          \node (A) {$(A \stackrel{\rho_A}{\longleftarrow} R_A)$};
          \node (B) at ($(A)+0.8*(r)$) {$(B \stackrel{\rho_B}{\longleftarrow} R_B)$.};
          \node (K) at ($(A) - 1.3*(r) + (u)$) {$\big(R_B \times_B A \stackrel{\firstproj}{\longleftarrow} (R_B \times_B A) \times_A R_A\big)$};
          \node (T) at ($(A) - 1.3*(r) - (u)$) {$(T \stackrel{\rho_T}{\longleftarrow} R_T)$};
          \draw[->,thick] (A) --node[above]{$\{\alpha, \rho_{\alpha}\}$} (B);
          \draw[->,thick] (T) --node[below,yshift=-0.2em]{$\{\tau, \rho_{\tau}\}$} (A);
          \draw[->,thick] (K) --node[above,xshift=0em,yshift=0.3em]{$\{\secondproj, \secondproj\}$} (A);
          \draw[->,thick,dashed,label={[mylabel]{$\big\{\bmatrow{\sigma}{\tau},\bmatrow{\rho_T \cdot \bmatrow{\sigma}{\tau}}{\rho_{\tau}}\big\}$}}] (T) -- (K);
          
          \node (Be) at ($(B)-(-0.4,0.15)$) {};
          \node (Te) at ($(T)+(-0.75,-0.15)$) {};
          \draw[bend right,->,thick,dotted,label={[above]{$\sigma$}},out=360-25,in=360-155] (Te) to (Be);
    \end{tikzpicture}
 \end{center}
 How to read this diagram: 
 the occurring weak pullbacks are defined by
 \begin{center}
 \begin{tabular}{ccccc}
     \begin{tikzpicture}[label/.style={postaction={
          decorate,
          decoration={markings, mark=at position .5 with \node #1;}},
          mylabel/.style={thick, draw=none, align=center, minimum width=0.5cm, minimum height=0.5cm,fill=white}},
          baseline = (base)]
          \coordinate (r) at (2.5,0);
          \coordinate (u) at (0,2);
          \node (A) {$R_B \times_B A$};
          \node (B) at ($(A)+(r)$) {$A$};
          \node (C) at ($(A) - (u)$) {$R_B$};
          \node (D) at ($(B) - (u)$) {$B$};
          \node (base) at ($0.5*(A) + 0.5*(C)$) {};
          \draw[->,thick] (A) -- (B);
          \draw[->,thick] (C) --node[above]{$\rho_B$} (D);
          \draw[->,thick] (A) -- (C);
          \draw[->,thick] (B) --node[right]{${\alpha}$} (D);
          \end{tikzpicture}
  
        & & and & &
  
         \begin{tikzpicture}[label/.style={postaction={
          decorate,
          decoration={markings, mark=at position .5 with \node #1;}},
          mylabel/.style={thick, draw=none, align=center, minimum width=0.5cm, minimum height=0.5cm,fill=white}},
          baseline = (base)]
          \coordinate (r) at (3.5,0);
          \coordinate (u) at (0,2);
          \node (A) {$(R_B \times_B A) \times_A R_A$};
          \node (B) at ($(A)+(r)$) {$R_A$};
          \node (C) at ($(A) - (u)$) {$R_B \times_B A$};
          \node (D) at ($(B) - (u)$) {$A$.};
          \node (base) at ($0.5*(A) + 0.5*(C)$) {};
          \draw[->,thick] (A) -- (B);
          \draw[->,thick] (C) --node[above]{$\secondproj$} (D);
          \draw[->,thick] (A) -- (C);
          \draw[->,thick] (B) --node[right]{${\rho_A}$} (D);
          \end{tikzpicture}
  \end{tabular}
 \end{center}
 The solid arrow pointing down right is the kernel embedding,
 the solid arrow pointing up right is a test morphism for the universal property of the kernel,
 and the dashed arrow pointing straight up is the morphism induced by the universal property.
 The dotted arrow depicts $\sigma: T \rightarrow R_B$, a witness for the composition
 $\{\tau,\rho_{\tau}\} \cdot \{\alpha, \rho_\alpha\}$ being zero.
 We see that no morphism witness is used for the construction of any morphism data.
 But now, in contrast to the cokernel construction,
 the witness $\sigma$ \emph{is} used in the construction of the morphism datum of the induced morphism.
\end{construction}
\begin{proof}[Correctness of the construction]
 It is easy to see that all morphisms are well-defined.
 A witness for the composition $\{\secondproj, \secondproj\}\cdot \{\alpha, \rho_\alpha\}$ being zero
 is given by the projection $\firstproj: R_B\times_B A \rightarrow R_B$.
 The commutativity of the triangle even holds strictly in the category of arrows of $\PC$.
 For the uniqueness of the induced morphism, it suffices to see that our construction of
 the kernel embedding is a monomorphism, which follows from the next Lemma \ref{lemma:monos}.
\end{proof}

\begin{lemma}\label{lemma:monos}
 Given a cospan $A \stackrel{\alpha}{\longrightarrow} B \stackrel{\gamma}{\longleftarrow} C$ in $\PC$,
 the morphism in $\mathcal{A}(\PC)$ defined by
 \[
  \{\alpha, \secondproj\}: (A \stackrel{\firstproj}{\longleftarrow} A\times_B C) \longrightarrow (B \stackrel{\gamma}{\longleftarrow} C)
 \]
 is a monomorphism.
\end{lemma}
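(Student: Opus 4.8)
The plan is to argue exactly as in the proof of Lemma \ref{lemma:epis}, but now exploiting the universal property of the weak pullback $A \times_B C$ in place of witnesses for compositions being zero. Since $\Freyd(\PC)$ is additive, it suffices to show that every morphism
\[
 \{\beta, \rho_{\beta}\}: (T \stackrel{\rho_T}{\longleftarrow} R_T) \longrightarrow (A \stackrel{\firstproj}{\longleftarrow} A \times_B C)
\]
with $\{\beta, \rho_{\beta}\} \cdot \{\alpha, \secondproj\} = 0$ in $\Freyd(\PC)$ is already zero in $\Freyd(\PC)$.

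First I would unwind the hypothesis. The composite $\{\beta, \rho_\beta\} \cdot \{\alpha, \secondproj\}$ has morphism datum $\beta \cdot \alpha$, and its vanishing in $\Freyd(\PC)$ supplies a witness, that is, a morphism $\lambda: T \rightarrow C$ with $\lambda \cdot \gamma = \beta \cdot \alpha$. As in Lemma \ref{lemma:epis}, the chosen morphism witness $\rho_\beta$ plays no role whatsoever.

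Next, note that $(\beta, \lambda)$ is a test datum for the weak pullback $A \times_B C$: we have $\beta: T \rightarrow A$, $\lambda: T \rightarrow C$, and $\beta \cdot \alpha = \lambda \cdot \gamma$. The weak pullback \operation then produces a morphism $\bmatrow{\beta}{\lambda}: T \rightarrow A \times_B C$ with $\bmatrow{\beta}{\lambda} \cdot \firstproj = \beta$. Since the codomain of $\{\beta, \rho_\beta\}$ is the object $(A \stackrel{\firstproj}{\longleftarrow} A \times_B C)$, whose relation morphism is precisely $\firstproj$, this $\bmatrow{\beta}{\lambda}$ is a witness for $\{\beta, \rho_\beta\}$ being equal to $0$ (indeed $\beta - 0 = \beta$). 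Hence $\{\beta, \rho_\beta\} = 0$, and $\{\alpha, \secondproj\}$ is a monomorphism.

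I do not expect any genuine obstacle; the argument is as short as that of Lemma \ref{lemma:epis}. The only subtlety worth flagging is the bookkeeping that the relation object of the codomain of $\{\beta, \rho_\beta\}$ is $A \times_B C$ with structure map $\firstproj$ — this is exactly what makes the weak-pullback lift $\bmatrow{\beta}{\lambda}$ land in the right place and satisfy the right equation to count as a zero-witness, and it is the reason the lemma is stated for the specific presentation $(A \stackrel{\firstproj}{\longleftarrow} A \times_B C)$ rather than an arbitrary presentation of $A$.
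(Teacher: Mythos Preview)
Your proof is correct and matches the paper's own argument essentially line for line: take a witness $\lambda$ (the paper calls it $\sigma$) for the composite being zero, observe that $(\beta,\lambda)$ is a test pair for the weak pullback, and use the resulting $\bmatrow{\beta}{\lambda}$ as a zero-witness for $\{\beta,\rho_\beta\}$ via $\bmatrow{\beta}{\lambda}\cdot\firstproj = \beta$. The only differences are notational and the extra explanatory commentary you provide.
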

\begin{proof}
 See also \cite[first part of proof of Lemma 3.2.2]{FreydRep}.
 Given a test morphism for $\{\alpha, \secondproj\}$ being a monomorphism, i.e., a morphism
 \[
 \{\delta, \rho_{\delta}\}: (D \stackrel{\rho_D}{\longleftarrow} R_D) \longrightarrow (A \stackrel{\firstproj}{\longleftarrow} A\times_B C)
 \]
 such that $\{\delta, \rho_{\delta}\} \cdot \{\alpha, \secondproj\} = 0$,
 take a witness $\sigma: D \rightarrow C$ for this composition being zero.
 Then $\bmatrow{\delta}{\sigma}: D \rightarrow A \times_B C$ is a witness for $\{\delta, \rho_{\delta}\}$ being zero.
\end{proof}

\subsection{Lift along monomorphisms in Freyd categories}

One axiom of abelian categories states that
every monomorphism is the kernel of its cokernel.
When we rephrase this axiom constructively, 
we see that we have to be able to construct lifts along
monomorphisms.

\begin{definition}\label{definition:computable_lifts}
 A category $\AC$ \textbf{has decidable lifts}
 if we have an algorithm
 that creates for a given 
 cospan $A \stackrel{\alpha}{\longrightarrow} B \stackrel{\gamma}{\longleftarrow} C$ in $\AC$
 a morphism $\lambda: A \rightarrow C$ satisfying $\lambda \cdot \gamma = \alpha$ (called a \textbf{lift} of $\alpha$ along $\gamma$)
 or disproves its existence.
 If $\gamma$ is a monomorphism, we also call $\lambda$ a \textbf{lift along a monomorphism}.
 
 Dually, we say $\AC$ \textbf{has decidable colifts}
 if we have an algorithm
 that creates for a given
 span $A \stackrel{\alpha}{\longleftarrow} B \stackrel{\gamma}{\longrightarrow} C$ in $\AC$
 a morphism $\lambda: C \rightarrow A$ satisfying $\gamma \cdot \lambda = \alpha$ (called a \textbf{colift} of $\alpha$ along $\gamma$)
 or disproves its existence.
 If $\gamma$ is an epimorphism, we also call $\lambda$ a \textbf{colift along an epimorphism}.
\end{definition}

\begin{construction}[Lifts along monomorphisms]\label{construction:lift_along_monos}
 The kernel embedding of a given monomorphism
 \[
  \{\alpha, \rho_{\alpha}\}: (A \stackrel{\rho_A}{\longleftarrow} R_A) \longrightarrow (B \stackrel{\rho_B}{\longleftarrow} R_B)
 \]
 in $\mathcal{A}(\PC)$ is zero.
 A witness of this fact 
 is given by a morphism $\sigma: R_B \times_B A \rightarrow R_A$ such that
 \begin{equation}\label{equation:witness}
  \secondproj = \sigma \cdot \rho_A
 \end{equation}
 (see Construction \ref{construction:kernel}).
 Now, the following diagram shows us how to construct a lift along $\{\alpha, \rho_{\alpha}\}$
 for a given test morphism:
 \begin{center}
    \begin{tikzpicture}[label/.style={postaction={
          decorate,
          decoration={markings, mark=at position .5 with \node #1;}},
          mylabel/.style={thick, draw=black, align=center, minimum width=0.5cm, minimum height=0.5cm,fill=white}}]
          \coordinate (r) at (5.5,0);
          \coordinate (u) at (0,2);
          \node (A) {$(B \stackrel{\rho_B}{\longleftarrow} R_B)$};
          \node (B) at ($(A)+1.09*(r)$) {$(B \smash{\stackrel{\begin{pmatrix}\rho_B \\ \alpha\end{pmatrix}}{\longleftarrow}} R_B \oplus A)$.};
          \node (K) at ($(A) - 1*(r) + (u)$) {$(A \stackrel{\rho_A}{\longleftarrow} R_A)$};
          \node (T) at ($(A) - 1*(r) - (u)$) {$(T \stackrel{\rho_T}{\longleftarrow} R_T)$};
          \draw[->,thick] (A) --node[above]{$\{\id_B, \begin{pmatrix}\id_{R_B} & 0\end{pmatrix}\}$} (B);
          \draw[->,thick] (T) --node[below,yshift=-0.2em]{$\{\tau, \rho_{\tau}\}$} (A);
          \draw[->,thick] (K) --node[above,xshift=0em,yshift=0.3em]{$\{\alpha, \rho_{\alpha}\}$} (A);
          \draw[->,thick,dashed,label={[mylabel]{$\big\{\tau_A, \bmatrow{\rho_{\tau}-\rho_T \cdot \tau_{R_B}}{\rho_T \cdot \tau_A}\cdot \sigma \big\}$}}] (T) -- (K);
          
          \node (Be) at ($(B)-(-0.4,0.15)$) {};
          \node (Te) at ($(T)+(-0.75,-0.15)$) {};
          \draw[bend right,->,thick,dotted,label={[mylabel]{$\pmatrow{\tau_{R_B}}{\tau_A}$}},out=360-25,in=360-155] (Te) to (Be);
    \end{tikzpicture}
 \end{center}
 How to read this diagram: the solid horizontal arrow is the cokernel projection of 
 our monomorphism $\{\alpha, \rho_{\alpha}\}$ (see Construction \ref{construction:cokernel}).
 The dotted arrow is a witness for the composition of the test morphism $\{\tau, \rho_{\tau}\}$ with the cokernel projection being zero.
 The upwards pointing dashed arrow is the desired lift,
 whose morphism witness involves the weak pullback induced morphism given by the diagram
 \begin{center}
  \begin{tikzpicture}[label/.style={postaction={
          decorate,
          decoration={markings, mark=at position .5 with \node #1;}},
          mylabel/.style={thick, draw=none, align=center, minimum width=0.5cm, minimum height=0.5cm,fill=white}},
          baseline = (base)]
          \coordinate (r) at (2.5,0);
          \coordinate (u) at (0,2);
          \node (A) {$R_B \times_B A$};
          \node (B) at ($(A)+(r)$) {$A$};
          \node (C) at ($(A) - (u)$) {$R_B$};
          \node (D) at ($(B) - (u)$) {$B$.};
          \node (T) at ($(A) - (r) + 0.5*(u)$) {$R_T$};
          \node (base) at ($0.5*(A) + 0.5*(C)$) {};
          \draw[->,thick] (A) -- (B);
          \draw[->,thick] (C) --node[above]{$\rho_B$} (D);
          \draw[->,thick] (A) -- (C);
          \draw[->,thick] (B) --node[right]{${\alpha}$} (D);
          \draw[bend right,->,thick,label={[mylabel]{$\rho_{\tau}-\rho_T\cdot \tau_{R_B}$}},out=360-25,in=360-155] (T) to (C);
          \draw[bend left,->,thick,label={[mylabel]{$\rho_T \cdot \tau_A$}},out=25,in=155] (T) to (B);
          \draw[->,thick,dashed] (T) -- (A);
  \end{tikzpicture}
 \end{center}
\end{construction}
\begin{proof}[Correctness of the construction]
 $\pmatrow{\tau_{R_B}}{\tau_A}$ being a witness gives the equation
 \begin{equation}\label{equation:witness2}
  \tau_{R_B} \cdot \rho_B + \tau_A \cdot \alpha = \tau
 \end{equation}
 from which we can already see that if the constructed lift is well-defined as a morphism in $\mathcal{A}(\PC)$,
 then it really is a lift along our monomorphism.
 So we have to check that the morphism witness of our lift is correct.
 Multiplying $\eqref{equation:witness2}$ with $\rho_T$ from the left yields
 \begin{equation}
  \rho_T \cdot \tau_{R_B} \cdot \rho_B + \rho_T \cdot \tau_A \cdot \alpha = \rho_T \cdot \tau = \rho_{\tau} \cdot \rho_B
 \end{equation}
 and thus
 \begin{equation}
   (\rho_{\tau} - \rho_T \cdot \tau_{R_B}) \cdot \rho_B = (\rho_T \cdot \tau_A) \cdot \alpha
 \end{equation}
 which proves that the weak pullback induced morphism $\bmatrow{\rho_{\tau}-\rho_T \cdot \tau_{R_B}}{\rho_T \cdot \tau_A}$ is well-defined.
 Last, we compute
 \begin{align*}
  \rho_T \cdot \tau_A &= \bmatrow{\rho_{\tau}-\rho_T \cdot \tau_{R_B}}{\rho_T \cdot \tau_A} \cdot \secondproj \\
  &\stackrel{\eqref{equation:witness}}{=} \bmatrow{\rho_{\tau}-\rho_T \cdot \tau_{R_B}}{\rho_T \cdot \tau_A} \cdot \sigma \cdot \rho_A
 \end{align*}
 which shows that the morphism witness is correct.
\end{proof}

\subsection{Colifts along epimorphisms in Freyd categories}

Dually, we have to be able to construct colifts along
epimorphisms in $\mathcal{A}(\PC)$.

\begin{construction}[Colifts along epimorphisms]\label{construction:colift_along_epis}
 The cokernel projection of a given epimorphism
 \[
  \{\alpha, \rho_{\alpha}\}: (A \stackrel{\rho_A}{\longleftarrow} R_A) \longrightarrow (B \stackrel{\rho_B}{\longleftarrow} R_B)
 \]
 in $\mathcal{A}(\PC)$ is zero. 
 A witness of this fact 
 is given by a morphism
 $\pmatrow{\sigma_{R_B}}{\sigma_A}: B \rightarrow R_B \oplus A$ such that
 \begin{equation}\label{equation:w1}
  \sigma_{R_B} \cdot \rho_B + \sigma_A \cdot \alpha = \id_B
 \end{equation}
 (see Construction \ref{construction:cokernel}).
 Now, the following diagram shows us how to construct a colift along $\{\alpha, \rho_{\alpha}\}$
 for a given test morphism:
 \begin{center}
    \begin{tikzpicture}[label/.style={postaction={
          decorate,
          decoration={markings, mark=at position .5 with \node #1;}},
          mylabel/.style={thick, draw=black, align=center, minimum width=0.5cm, minimum height=0.5cm,fill=white}}]
          \coordinate (r) at (6,0);
          \coordinate (u) at (0,2);
          \node (A) {$\big(R_B \times_B A \longleftarrow R_K\big)$};
          \node (B) at ($(A)+0.8*(r)$) {$(A \stackrel{\rho_A}{\longleftarrow} R_A)$};
          \node (C) at ($(B) + 0.9*(r) + (u)$) {$(B \stackrel{\rho_B}{\longleftarrow} R_B)$};
          \node (T) at ($(B) + 0.9*(r) - (u)$) {$(T \stackrel{\rho_T}{\longleftarrow} R_T)$.};
          \draw[->,thick] (A) --node[above]{$\{\secondproj, \secondproj\}$} (B);
          \draw[->,thick] (B) --node[below,yshift=-0.1em,xshift=-1em]{$\{\tau, \rho_{\tau}\}$} (T);
          \draw[->,thick] (B) --node[above,xshift=-1.5em,yshift=0.5em]{$\{\alpha, \rho_{\alpha}\}$} (C);
          \draw[->,thick,dashed,label={[mylabel]{$\{\sigma_A \cdot \tau, \bmatrow{\id_{R_B} - \rho_B\cdot \sigma_{R_B}}{\rho_B \cdot \sigma_A} \cdot \sigma\}$}}] (C) -- (T);
          
          \node (Ae) at ($(A)+(-0.75,-0.15)$) {};
          \node (Te) at ($(T)-(-0.4,0.15)$) {};
          \draw[bend right,->,thick,dotted,label={[above]{$\sigma$}},out=360-25,in=360-155] (Ae) to (Te);
    \end{tikzpicture}
 \end{center}
 How to read this diagram: the solid horizontal arrow is the kernel embedding of 
 our epimorphism $\{\alpha, \rho_{\alpha}\}$ (see Construction \ref{construction:kernel}),
 where we set $R_K := (R_B \times_B A) \times_A R_A$.
 The dotted arrow is a witness for the composition of the kernel embedding with the test morphism $\{\tau, \rho_{\tau}\}$ being zero.
 The downwards pointing dashed arrow is the desired colift,
 whose morphism witness involves the weak pullback induced morphism given by the diagram
 \begin{center}
  \begin{tikzpicture}[label/.style={postaction={
          decorate,
          decoration={markings, mark=at position .5 with \node #1;}},
          mylabel/.style={thick, draw=none, align=center, minimum width=0.5cm, minimum height=0.5cm,fill=white}},
          baseline = (base)]
          \coordinate (r) at (2.5,0);
          \coordinate (u) at (0,2);
          \node (A) {$R_B \times_B A$};
          \node (B) at ($(A)+(r)$) {$A$};
          \node (C) at ($(A) - (u)$) {$R_B$};
          \node (D) at ($(B) - (u)$) {$B$.};
          \node (T) at ($(A) - (r) + 0.5*(u)$) {$R_B$};
          \node (base) at ($0.5*(A) + 0.5*(C)$) {};
          \draw[->,thick] (A) -- (B);
          \draw[->,thick] (C) --node[above]{$\rho_B$} (D);
          \draw[->,thick] (A) -- (C);
          \draw[->,thick] (B) --node[right]{${\alpha}$} (D);
          \draw[bend right,->,thick,label={[mylabel]{$\id_{R_B} - \rho_B \cdot \sigma_{R_B}$}},out=360-25,in=360-155] (T) to (C);
          \draw[bend left,->,thick,label={[mylabel]{$\rho_B \cdot \sigma_A$}},out=25,in=155] (T) to (B);
          \draw[->,thick,dashed] (T) -- (A);
  \end{tikzpicture}
 \end{center}
 
\end{construction}
\begin{proof}[Correctness of the construction]
 Multiplying \eqref{equation:w1} with $\rho_B$ from the left yields
 \begin{equation}
  \rho_B \cdot \sigma_{R_B} \cdot \rho_B + \rho_B \cdot \sigma_A \cdot \alpha = \rho_B
 \end{equation}
 and thus
 \begin{equation}
  (\rho_B \cdot \sigma_A) \cdot \alpha = ( \id_{R_B} - \rho_B \cdot \sigma_{R_B} ) \cdot \rho_B
 \end{equation}
 which shows that $\bmatrow{\id_{R_B} - \rho_B\cdot \sigma_{R_B}}{\rho_B \cdot \sigma_A}$ is well-defined.
 Well-definedness of the constructed colift as a morphism in $\mathcal{A}(\PC)$
 follows from the commutativity of the two inner squares in the diagram
 \begin{center}
  \begin{tikzpicture}[label/.style={postaction={
          decorate,
          decoration={markings, mark=at position .5 with \node #1;}},
          mylabel/.style={thick, draw=none, align=center, minimum width=0.5cm, minimum height=0.5cm,fill=white}},
          baseline = (base)]
          \coordinate (r) at (2.5,0);
          \coordinate (u) at (0,2);
          \node (A) {$R_B$};
          \node (B) at ($(A)+2.5*(r)$) {$R_B \times_B A$};
          \node (C) at ($(A) - (u)$) {$B$};
          \node (D) at ($(B) - (u)$) {$A$};
          \node (E) at ($(B) + (r)$) {$R_T$};
          \node (F) at ($(D) + (r)$) {$T$.};
          \node (base) at ($0.5*(A) + 0.5*(C)$) {};
          \draw[->,thick,dashed] (A) --node[above]{$\bmatrow{\id_{R_B} - \rho_B\cdot \sigma_{R_B}}{\rho_B \cdot \sigma_A}$} (B);
          \draw[->,thick] (A) --node[left]{$\rho_B$} (C);
          \draw[->,thick] (C) --node[above]{$\sigma_A$} (D);
          \draw[->,thick] (B) --node[left]{$\secondproj$} (D);
          \draw[->,thick] (B) --node[above]{$\sigma$} (E);
          \draw[->,thick] (D) --node[above]{$\tau$} (F);
          \draw[->,thick] (E) --node[right]{$\rho_T$} (F);
  \end{tikzpicture}
 \end{center}
It remains to show that the constructed colift really yields a colift for the given test morphism.
To this end, we multiply \eqref{equation:w1} with $\alpha$ from the left
\begin{equation}
 \alpha \cdot \sigma_{R_B} \cdot \rho_B + \alpha \cdot \sigma_A \cdot \alpha = \alpha
 \end{equation}
 and obtain
 \begin{equation}
 (\alpha \cdot \sigma_{R_B}) \cdot \rho_B = ( \id_A - \alpha \cdot \sigma_A) \cdot \alpha 
 \end{equation}
 which shows that $\bmatrow{\alpha \cdot \sigma_{R_B}}{\id_A - \alpha \cdot \sigma_A}: A \rightarrow R_B \times_B A$ is well-defined.
 Last, we compute
 \begin{align*}
 \bmatrow{\alpha \cdot \sigma_{R_B}}{\id_A - \alpha \cdot \sigma_A} \cdot \sigma \cdot \rho_{T}
 &=
  \bmatrow{\alpha \cdot \sigma_{R_B}}{\id_A - \alpha \cdot \sigma_A} \cdot \secondproj \cdot \tau \\   
  &=(\id_A - \alpha \cdot \sigma_A)\cdot \tau \\
  &={\tau - \alpha \cdot \sigma_A}\cdot \tau \\
 \end{align*}
 and see that we have found a witness for $\tau$ and $\alpha \cdot (\sigma_A\cdot \tau)$ being equal in $\mathcal{A}(\PC)$.
\end{proof}

\subsection{A constructive proof of the main theorem}

\begin{proof}[Proof of Theorem \ref{theorem:freyd}]
 Constructions \ref{construction:cokernel} shows that $\Freyd( \PC )$ always has cokernels.
 Moreover, if $\PC$ has weak kernels, then
 Constructions \ref{construction:kernel}, \ref{construction:lift_along_monos}, \ref{construction:colift_along_epis}
 show that $\mathcal{A}(\PC)$ is abelian.
 For the other direction, we construct weak kernels in $\PC$ using
 the operation for kernels in $\mathcal{A}(\PC)$:
 given a morphism $A \stackrel{\alpha}{\longrightarrow} B \in \PC$,
 we compute a kernel embedding
 \[\{\kappa, \rho_{\kappa}\}: (K \stackrel{\rho_K}{\longleftarrow} R_K) \longrightarrow (A \longleftarrow 0)\]
 of $\{\alpha,0\}:(A \longleftarrow 0) \longrightarrow (B \longleftarrow 0)$.
 Then $\kappa \cdot \alpha = 0 \in \PC$ since any witness for $\{\kappa, \rho_{\kappa}\}\cdot\{\alpha,0\}$
 being zero in $\mathcal{A}(\PC)$ factors over $0$.
 Thus, $\kappa: K \rightarrow A$ is a candidate for a weak kernel embedding of $\alpha$ in $\PC$.
 Let $T \stackrel{\tau}{\longrightarrow} A \in \PC$ such that $\tau \cdot \alpha = 0 \in \PC$.
 We use the universal property of the kernel in $\mathcal{A}(\PC)$
 to compute the dashed arrow in the following commutative diagram:
 \begin{center}
    \begin{tikzpicture}[label/.style={postaction={
          decorate,
          decoration={markings, mark=at position .5 with \node #1;}},
          mylabel/.style={thick, draw=black, align=center, minimum width=0.5cm, minimum height=0.5cm,fill=white}}]
          \coordinate (r) at (4.5,0);
          \coordinate (u) at (0,1);
          \node (A) {$(A \longleftarrow 0)$};
          \node (B) at ($(A)+0.8*(r)$) {$(B \longleftarrow 0)$.};
          \node (K) at ($(A) - 1.3*(r) + (u)$) {$(K \stackrel{\rho_K}{\longleftarrow}R_K)$};
          \node (T) at ($(A) - 1.3*(r) - (u)$) {$(T \longleftarrow 0)$};
          \draw[->,thick] (A) --node[above]{$\{\alpha,0\}$} (B);
          \draw[->,thick] (T) --node[below,yshift=-0.2em]{$\{\tau,0\}$} (A);
          \draw[->,thick] (K) --node[above,xshift=0em,yshift=0.3em]{$\{\kappa, \rho_{\kappa}\}$} (A);
          \draw[->,thick,dashed] (T) --node[left]{$\big\{u, \rho_u\big\}$} (K);
          
          \node (Be) at ($(B)-(-0.4,0.15)$) {};
          \node (Te) at ($(T)+(-0.75,-0.15)$) {};
          
    \end{tikzpicture}
 \end{center}
 Again, from $\{u, \rho_u\} \cdot \{\kappa, \rho_{\kappa}\} = \{\tau,0\} \in \mathcal{A}(\PC)$ we can deduce $u \cdot \kappa = \tau \in \PC$, thus, we really have constructed a weak kernel of $\alpha$.
\end{proof}

\begin{remark}
 For an actual computer implementation of an abelian category $\AC$,
 it is a useful feature to have \textbf{decidable equality} of morphisms (see, e.g., Remark \ref{remark:isomorphism}
 for an application in the context of Freyd categories).
 We call categories (in constructive contexts) with decidable equality of morphisms \textbf{computable}.
 In the case of Freyd categories,
 $\Freyd( \PC )$ is computable if and only if
 $\PC$ \textbf{has decidable lifts},
 which means (by our constructive interpretation)
 to have an algorithm that actually create lifts
 or to disproves\footnote{
 A good example for such a disproval is coming across the equation $0 = 1$ while performing the Gaussian elimination algorithm
 for solving a linear system over a field.
 } their existence).
\end{remark}

\begin{corollary}\label{corollary:freyd}
 Let $\PC$ be an additive category.
 Then $\mathcal{A}(\PC)$ is a computable abelian category
 if and only if $\PC$ has weak kernels and decidable lifts.
\end{corollary}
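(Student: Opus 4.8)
The plan is to observe that Corollary \ref{corollary:freyd} is the conjunction of two \emph{independent} equivalences, of which Theorem \ref{theorem:freyd} already supplies the first: ``$\Freyd(\PC)$ is abelian $\iff$ $\PC$ has weak kernels''. The remaining content is the equivalence ``$\Freyd(\PC)$ has decidable equality of morphisms $\iff$ $\PC$ has decidable lifts'', and the two equivalences may be combined freely, because building the category $\Freyd(\PC)$ and testing equality of its morphisms never appeals to weak kernels. So ``$\Freyd(\PC)$ computable abelian'' decomposes as ``abelian'' together with ``decidable equality'', matching ``weak kernels'' together with ``decidable lifts'', and the corollary follows once the second equivalence is established in both directions.

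First I would show that decidable lifts in $\PC$ give decidable equality in $\Freyd(\PC)$. This is a direct unwinding of the definition of ``equal in $\Freyd(\PC)$'': two morphism data $\alpha$ and $\alpha'$ from $(A \stackrel{\rho_A}{\longleftarrow} R_A)$ to $(B \stackrel{\rho_B}{\longleftarrow} R_B)$ are equal exactly when $\alpha - \alpha' \colon A \to B$ admits a lift along $\rho_B \colon R_B \to B$. Hence applying the decidable-lifts algorithm of $\PC$ to the cospan $A \stackrel{\alpha - \alpha'}{\longrightarrow} B \stackrel{\rho_B}{\longleftarrow} R_B$ either returns a morphism $\lambda$ with $\lambda \cdot \rho_B = \alpha - \alpha'$, which is a witness for $\alpha = \alpha'$, or disproves its existence, which witnesses $\alpha \neq \alpha'$.

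Conversely, I would show that decidable equality in $\Freyd(\PC)$ gives decidable lifts in $\PC$. Given a cospan $A \stackrel{\alpha}{\longrightarrow} B \stackrel{\gamma}{\longleftarrow} C$ in $\PC$, the morphism datum $\alpha$ defines a morphism $\{\alpha, 0\} \colon (A \longleftarrow 0) \to (B \stackrel{\gamma}{\longleftarrow} C)$ in $\Freyd(\PC)$, the defining square commuting trivially because it has the zero object in its upper-right corner. By the definition of equality, $\{\alpha, 0\}$ equals the zero morphism $(A \longleftarrow 0) \to (B \stackrel{\gamma}{\longleftarrow} C)$ if and only if there exists $\lambda \colon A \to C$ with $\lambda \cdot \gamma = \alpha$, i.e.\ if and only if $\alpha$ lifts along $\gamma$; moreover, in the affirmative case an equality witness \emph{is} such a lift. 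So running the decidable-equality algorithm of $\Freyd(\PC)$ on the pair consisting of $\{\alpha,0\}$ and the zero morphism either outputs the desired lift or disproves its existence. Combined with Theorem \ref{theorem:freyd}, this proves the corollary.

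The hard part is not a calculation but the constructive bookkeeping. One must read ``decidable equality of morphisms'' in this paper's sense as an algorithm that, in the affirmative case, actually \emph{produces} an equality witness $\lambda$ rather than a bare truth value, so that the reduction above genuinely outputs a lift; dually ``decidable lifts'' must output the lift morphism itself. Once these conventions — precisely the treatment of equality fixed earlier in the paper — are in place, both implications are immediate consequences of the definition of equality in $\Freyd(\PC)$, and no further argument is required.
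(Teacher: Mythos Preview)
Your proposal is correct and follows essentially the same approach as the paper: the paper states the equivalence ``$\Freyd(\PC)$ computable $\iff$ $\PC$ has decidable lifts'' in the remark immediately preceding the corollary and then regards the corollary as an immediate combination of that remark with Theorem~\ref{theorem:freyd}, which is exactly your decomposition. You are simply more explicit than the paper in spelling out both directions of the second equivalence and in flagging that, under the paper's constructive reading of equality, a decision procedure for equality in $\Freyd(\PC)$ must in the affirmative case output an actual witness $\lambda$, which is what makes the converse direction go through.
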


\begin{remark}
Note that if $\PC$ has weak kernels,
then (by our constructive interpretation) we already have an algorithm for lifting
\emph{some} cospans, namely those representing
a test situation for the weak kernel (see Definition \ref{definition:weak_kernels_3}).
But it is impossible to derive
from such an algorithm one for general lifts:
we will see an example of a computable additive category $\PC$
with weak kernels
but with a computationally undecidable lifting problem
in Subsection \ref{subsection:add_cat_dec_eq_kernels_undec_lifts}.
\end{remark}

\subsection{The induced functor}
In this subsection we single out
the most important constructive aspect
of the idea that the Freyd category
is a universal way to add cokernels to $\PC$.

\begin{construction}
Let $\PC$ be an additive category.
Given the data:
\begin{theoremenumerate}
 \item An additive category $\TC$.
 \item A functor $F: \PC \rightarrow \TC$.
 \item An \operation that \constructs for given $A \stackrel{\alpha}{\longrightarrow} B$ in $\PC$
       a cokernel object $\cokernel( F( \alpha ) )$
       (along with its cokernel projection and universal property) of $F(A) \stackrel{F(\alpha)}{\longrightarrow} F(B)$.\label{construction:induced_functor_3}
\end{theoremenumerate}
Then we can construct an \textbf{induced functor} $U: \Freyd( \PC ) \rightarrow \TC$ as follows.
\begin{itemize}
 \item An object $(A \stackrel{\rho_A}{\longleftarrow} R_A)$ in $\mathcal{A}(\PC)$ is mapped to $\cokernel( F( \rho_A ) )$.
 \item Given a morphism
 \[
  \{\alpha, \rho_{\alpha}\}: (A \stackrel{\rho_A}{\longleftarrow} R_A) \longrightarrow (B \stackrel{\rho_B}{\longleftarrow} R_B) \in \Freyd( \PC ),
 \]
 we map it to the morphism induced by the universal property of the cokernels:
 \begin{center}
         \begin{tikzpicture}[label/.style={postaction={
          decorate,
          decoration={markings, mark=at position .5 with \node #1;}},
          mylabel/.style={thick, draw=none, align=center, minimum width=0.5cm, minimum height=0.5cm,fill=white}}]
          \coordinate (r) at (3.5,0);
          \coordinate (u) at (0,2);
          \node (A) {$F(A)$};
          \node (B) at ($(A)+(r)$) {$F(R_A)$};
          \node (C) at ($(A) - (u)$) {$F(B)$};
          \node (D) at ($(B) - (u)$) {$F(R_B)$.};
          \node (CrhoA) at ($(A) - (r)$) {$\cokernel( F( \rho_A ) )$};
          \node (CrhoB) at ($(C) - (r)$) {$\cokernel( F( \rho_B ) )$};
          \draw[->,thick] (B) --node[above]{$F(\rho_A)$} (A);
          \draw[->,thick] (D) --node[above]{$F(\rho_B)$} (C);
          \draw[->,thick] (A) --node[left]{$F(\alpha)$} (C);
          \draw[->,thick] (A) -- (CrhoA);
          \draw[->,thick] (C) -- (CrhoB);
          \draw[->,thick] (B) --node[right]{$F(\rho_{\alpha})$} (D);
          \draw[->,thick,dashed] (CrhoA) -- (CrhoB);
          \end{tikzpicture}
        \end{center}
\end{itemize}
\end{construction}
\begin{proof}[Correctness of the construction]
 The morphism induced by the universal property of the cokernel is independent
 of the morphism witness. 
 Now, correctness follows from the functoriality of the cokernel.
\end{proof}

\begin{remark}
 If we have two \operations $\cokernel$, $\cokernel'$
 as instances of Construction \ref{construction:induced_functor_3},
 then the corresponding induced functors $U$, $U'$
 are naturally isomorphic.
\end{remark}

%

\section{Interpretations of Freyd categories}\label{section:applications_modules}

In this subsection we want to give several interpretations of $\Freyd( \PC )$ for specific inputs $\PC$
that are all of the same spirit: the objects in $\Freyd( \PC )$
represent finitely presented objects in some abelian category $\AC$.
The first theorem in this section
is the main tool for giving these interpretations.

\begin{theorem}\label{theorem:classical_equivalence}
 Let $\PC$ be an additive category,
 $\AC$ an abelian category, and $F: \PC \rightarrow \AC$ a full and faithful functor 
 such that $F(P)$ is a projective object for all $P \in \PC$.
 Then the induced functor
 \[
  U: \Freyd( \PC ) \rightarrow \AC
 \]
 is an equivalence 
 between
 $\Freyd( \PC )$
 and the full subcategory $\BC$ of $\AC$
 generated by those objects $A$
 for which there exist $P_A,Q_A \in \PC$
 and
 an exact sequence
 \[
  0 \longleftarrow A \stackrel{\epsilon_A}{\longleftarrow} F(P_A) \stackrel{\delta_A}{\longleftarrow} F(Q_A).
 \]
\end{theorem}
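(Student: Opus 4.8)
Before the author's proof, here is how I would approach Theorem \ref{theorem:classical_equivalence}. The statement is the standard fact that $\Freyd(\PC)$ is the \emph{cokernel completion} of $\PC$ along $F$, so the plan is: unwind the induced functor $U$, check that it is essentially surjective onto $\BC$, then show it is fully faithful. Since $\AC$ is abelian it has all cokernels, so $F$ together with the cokernel operation of $\AC$ is an instance of the data required by the Construction preceding the theorem; hence $U$ is defined, it sends $(A \stackrel{\rho_A}{\longleftarrow} R_A)$ to $\cokernel(F(\rho_A))$, and it sends $\{\alpha,\rho_\alpha\}$ to the morphism $\bar\alpha$ of cokernels determined by $\epsilon_A \cdot \bar\alpha = F(\alpha)\cdot\epsilon_B$, where $\epsilon_A\colon F(A)\to\cokernel(F(\rho_A))$ is the cokernel projection. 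Since $\kernel(\epsilon_A)=\image(F(\rho_A))$, the object $\cokernel(F(\rho_A))$ already sits in an exact sequence of the shape demanded by $\BC$, so $U$ takes values in $\BC$. Conversely, if $A\in\BC$ via $0 \leftarrow A \leftarrow F(P_A) \stackrel{\delta_A}{\longleftarrow} F(Q_A)$, then fullness of $F$ gives $\rho\colon Q_A\to P_A$ with $F(\rho)=\delta_A$, hence $A\cong\cokernel(F(\rho))=U(P_A\stackrel{\rho}{\longleftarrow}Q_A)$; so $U$ is essentially surjective onto $\BC$, which is thus exactly its essential image.

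For faithfulness, suppose $U\{\alpha,\rho_\alpha\}=U\{\alpha',\rho_{\alpha'}\}$ for two morphisms $(A\stackrel{\rho_A}{\longleftarrow}R_A)\to(B\stackrel{\rho_B}{\longleftarrow}R_B)$. Subtracting the two defining relations $\epsilon_A\cdot U\{\alpha,\rho_\alpha\}=F(\alpha)\cdot\epsilon_B$ and $\epsilon_A\cdot U\{\alpha',\rho_{\alpha'}\}=F(\alpha')\cdot\epsilon_B$ gives $F(\alpha-\alpha')\cdot\epsilon_B=0$, so $F(\alpha-\alpha')$ factors through $\kernel(\epsilon_B)=\image(F(\rho_B))$. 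Projectivity of $F(A)$ lets the resulting factor be lifted along the canonical epimorphism $F(R_B)\twoheadrightarrow\image(F(\rho_B))$ to a morphism $h\colon F(A)\to F(R_B)$ with $h\cdot F(\rho_B)=F(\alpha-\alpha')$; fullness of $F$ makes $h=F(\lambda)$ for some $\lambda\colon A\to R_B$, and faithfulness of $F$ then gives $\lambda\cdot\rho_B=\alpha-\alpha'$ in $\PC$, which is exactly a witness that $\alpha$ and $\alpha'$ are equal in $\Freyd(\PC)$.

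For fullness, let $\phi\colon\cokernel(F(\rho_A))\to\cokernel(F(\rho_B))$ be a morphism in $\AC$. Using projectivity of $F(A)$, lift $\epsilon_A\cdot\phi$ along the epimorphism $\epsilon_B$ to $\psi\colon F(A)\to F(B)$ with $\psi\cdot\epsilon_B=\epsilon_A\cdot\phi$, and write $\psi=F(\alpha)$ by fullness of $F$. Then $F(\rho_A)\cdot F(\alpha)\cdot\epsilon_B=F(\rho_A)\cdot\epsilon_A\cdot\phi=0$, so $F(\rho_A\cdot\alpha)$ factors through $\kernel(\epsilon_B)=\image(F(\rho_B))$; lifting along $F(R_B)\twoheadrightarrow\image(F(\rho_B))$ via projectivity of $F(R_A)$ and using fullness and faithfulness of $F$ produces a witness $\rho_\alpha\colon R_A\to R_B$ with $\rho_A\cdot\alpha=\rho_\alpha\cdot\rho_B$. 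Hence $\{\alpha,\rho_\alpha\}$ is a morphism of $\Freyd(\PC)$, and from $\epsilon_A\cdot U\{\alpha,\rho_\alpha\}=F(\alpha)\cdot\epsilon_B=\epsilon_A\cdot\phi$ and $\epsilon_A$ being an epimorphism we get $U\{\alpha,\rho_\alpha\}=\phi$. Together with essential surjectivity, this makes $U$ an equivalence onto $\BC$.

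I expect the fully faithfulness part to be the real obstacle: in both directions one is handed a factorization statement that lives in $\AC$ — some $F(\beta)$ kills $\epsilon_B$, or factors through $\kernel(\epsilon_B)$ — and must convert it into a genuine statement about morphism data and witnesses in $\PC$. This conversion goes through only because the $F(P)$ are projective, so the relevant maps can be lifted along the epimorphisms $F(R_B)\twoheadrightarrow\image(F(\rho_B))$ and $\epsilon_B$, and because $F$ is full and faithful, which lets one realize those lifts by morphisms of $\PC$ and transport their defining equations back. The conceptual core is the identification of equality of $\alpha,\alpha'$ in $\Freyd(\PC)$ with the vanishing of $F(\alpha-\alpha')$ after composition with $\epsilon_B$.
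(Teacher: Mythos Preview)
Your argument is correct. The only difference from the paper is organizational: you verify that $U$ is essentially surjective and fully faithful, whereas the paper constructs an explicit quasi-inverse $V\colon\BC\to\Freyd(\PC)$ by choosing for each $A\in\BC$ a presentation and for each morphism $\alpha\colon A\to B$ a lift $p\colon P_A\to P_B$. The substance is the same: the paper's construction of $V$ on morphisms is your fullness step, and its check that $V$ is well-defined (independence of the choice of $p$) is your faithfulness step, both relying on exactly the same uses of projectivity of the $F(P)$ and of $F$ being full and faithful. Your packaging has the minor advantage of not needing to invoke any choice principle to assemble $V$ as a functor, since you never build the inverse explicitly; the paper's packaging has the advantage of exhibiting the inverse concretely, which fits its constructive emphasis.
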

\begin{proof}
 We define an inverse functor $V: \BC \rightarrow \Freyd( \PC )$ as follows.
 We have an operation on objects sending $A \in \BC$ to $(P_A \stackrel{d_A}{\longleftarrow} Q_A) \in \Freyd( \AC )$,
 where $d_A$ is a preimage of $\delta_A$.
 
 Let $B \in \BC$ with $P_B, Q_B \in \PC$
 and $0 \longleftarrow B \stackrel{\epsilon_B}{\longleftarrow} F(P_B) \stackrel{\delta_B}{\longleftarrow} F(Q_B)$ exact.
 Using all of our assumptions on $F$, we can conclude:
 for $\alpha: A \rightarrow B \in \BC$,
 there exists a morphism $p:P_A \rightarrow P_B$ such that
 \begin{center}
         \begin{tikzpicture}[label/.style={postaction={
          decorate,
          decoration={markings, mark=at position .5 with \node #1;}},
          mylabel/.style={thick, draw=none, align=center, minimum width=0.5cm, minimum height=0.5cm,fill=white}}]
          \coordinate (r) at (3.5,0);
          \coordinate (u) at (0,2);
          \node (A) {$F(P_A)$};
          \node (B) at ($(A)+(r)$) {$F(Q_A)$};
          \node (C) at ($(A) - (u)$) {$F(P_B)$};
          \node (D) at ($(B) - (u)$) {$F(Q_B)$};
          \node (CrhoA) at ($(A) - (r)$) {$A$};
          \node (CrhoB) at ($(C) - (r)$) {$B$};
          \draw[->,thick] (B) --node[above]{$F(d_A)$} (A);
          \draw[->,thick] (D) --node[above]{$F(d_B)$} (C);
          \draw[->,thick] (A) --node[left]{$F(p)$} (C);
          \draw[->,thick] (A) --node[above]{$\epsilon_A$} (CrhoA);
          \draw[->,thick] (C) --node[above]{$\epsilon_B$} (CrhoB);
          \draw[->,thick] (B) --node[right]{$F(q)$} (D);
          \draw[->,thick] (CrhoA) --node[left]{$\alpha$} (CrhoB);
          \end{tikzpicture}
        \end{center}
 commutes for some morphism $q: Q_A \rightarrow Q_B$.
 Given another morphism $p': P_A \rightarrow P_B$ with this property,
 we have $(F(p) - F(p')) \cdot \epsilon_B = 0$.
 Again using all of our assumptions on $F$,
 we can conclude the existence of $\lambda: P_A \rightarrow Q_B$
 such that $\lambda \cdot d_B = p - p'$, which proves
 that $p$ and $p'$ are equal as morphisms
 from $(P_A \stackrel{d_A}{\longleftarrow} Q_A)$
 to $(P_B \stackrel{d_B}{\longleftarrow} Q_B)$ in $\Freyd( \PC )$.
 Thus, using the axiom of unique choice, $\alpha \mapsto p$ defines a well-defined action of $V$ on morphisms.
 Furthermore, $U$ and $V$ are readily seen to be mutual inverses.
\end{proof}

\subsection{Finitely presented modules}

\begin{example}[Interpretation as finitely presented modules]\label{example:fpmod}
 Let $R$ be a ring. We denote the abelian category of left $R$-modules
 by $R\Modl$. We define $\Rows_R$ to be the full subcategory of $R\Modl$ generated by 
 all row modules $R^{1 \times n}$ for $n \in \N_0$ considered as free left modules.
 Morphisms $\Hom_{\Rows_R}( R^{1 \times m}, R^{1 \times n} )$ can be naturally identified with matrices $R^{m \times n}$ for $m,n \in \N_0$.
 Then $\Rows_R \subseteq R\Modl$
 is a full and faithful embedding of projective objects.
 From Theorem \ref{theorem:classical_equivalence}
 we conclude
 \[
  \Freyd( \Rows_R ) \simeq R\fpmodl,
 \]
 where $R\fpmodl$ is the category of finitely presented left $R$-modules.
 
 The whole example also works for right modules by considering the full subcategory $\Cols_R \subseteq \Modr R$ of right column modules
 instead.
 Note that if $M \in R^{n \times m}$ represents a morphism from $R^{m \times  1}$ to $R^{n \times 1}$ in $\Cols_R$, 
 reinterpreting it as a morphism from $R^{1 \times n}$ to $R^{1 \times m}$ in $\Rows_R$
 yields an equivalence $\Cols_R \simeq \Rows_R^{\op}$. Furthermore,  $\Rows_R^{\op} \simeq \Rows_{R^{\op}}$. We conclude:
 \[
  \mathcal{A}( \Cols_R ) \simeq \mathcal{A}( \Rows_R^{\op} ) \simeq \mathcal{A}( \Rows_{R^{\op}} ) \simeq \fpmodr R,
 \]
 where $\fpmodr R$ denotes the category of finitely presented right $R$-modules.
\end{example}

In \cite{BL} computable rings are introduced. We give a definition
and a characterization of such rings using Freyd categories.

\begin{definition}
 A ring $R$ that
 \begin{theoremenumerate}
  \item is \textbf{left coherent}, i.e., 
        for a given matrix $A$ with coefficients in $R$
        we can compute a matrix $L$ such that $LA = 0$ and
        for all matrices $T$ such that $TA = 0$,
        there exists a matrix $U$ such that $UL = T$ (that means $L$ generates the row syzygies),\label{definition:computable_1}
  \item \textbf{has decidable lifts}, i.e., there is an algorithm to decide solvability and to construct a particular solution
        of a linear systems $XA = B$
        for given matrices $A$, $B$ with coefficients in $R$,
 \end{theoremenumerate}
 is called \textbf{left computable}. A ring $R$ is \textbf{right computable} if $R^{\op}$
 is left computable.
 If $R$ is left and right computable we simply call it \textbf{computable}.
\end{definition}

\begin{remark}
 Be aware of the existential quantifiers
 in Definition \ref{definition:computable_1}.
 By our constructive interpretation we regard left coherent rings
 as being equipped with an algorithm for 
 computing $L$ (for given $A$) \emph{as well as} $U$ (for given $A$ and $T$).
\end{remark}

\begin{remark}
 A left computable ring $R$ has decidable equality,
 since $a = b$ if and only if $x\cdot 0 = (a-b)$ is solvable
 for $a,b \in R$ (and similar for right computable rings).
\end{remark}

\begin{theorem}\label{theorem:rings_and_freyd}
 Let $R$ be a ring.
 \begin{theoremenumerate}
  \item $R$ is left coherent if and only if $\Freyd( \Rows_R )$ is abelian.\label{theorem:rings_and_freyd_1}
  \item $R$ is left computable if and only if $\Freyd( \Rows_R )$ is computable abelian.\label{theorem:rings_and_freyd_2}
 \end{theoremenumerate}
\end{theorem}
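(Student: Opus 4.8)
The plan is to prove each equivalence by combining the characterization of $\Freyd(\Rows_R)$ as $R\fpmodl$ (Example~\ref{example:fpmod}) with the constructive analysis of Freyd categories carried out in Section~\ref{section:constructive_freyd_categories}, unwinding the categorical operations into matrix operations.

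For part~\eqref{theorem:rings_and_freyd_1}: by Theorem~\ref{theorem:freyd}, $\Freyd(\Rows_R)$ is abelian if and only if $\Rows_R$ has weak kernels, so it suffices to show that $\Rows_R$ has weak kernels if and only if $R$ is left coherent. A morphism $R^{1\times m}\to R^{1\times n}$ in $\Rows_R$ is exactly a matrix $A\in R^{m\times n}$; a weak kernel of $A$ in $\Rows_R$ consists of an object $R^{1\times \ell}$, a morphism $L\in R^{\ell\times m}$ with $LA=0$, together with an operation producing for each test $T\in R^{t\times m}$ with $TA=0$ some $U\in R^{t\times \ell}$ with $UL=T$. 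This is literally the data in Definition~\ref{definition:computable_1}~\eqref{definition:computable_1}: $L$ generates the row syzygies of $A$. So the equivalence is just a dictionary translation (and one checks the constructive quantifiers match up, using the Remark after Definition~\ref{definition:computable_1} which reads left coherence as being equipped with the two algorithms for $L$ and $U$).

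For part~\eqref{theorem:rings_and_freyd_2}: by Corollary~\ref{corollary:freyd}, $\Freyd(\Rows_R)$ is computable abelian if and only if $\Rows_R$ has weak kernels and decidable lifts. By part~\eqref{theorem:rings_and_freyd_1} the weak-kernel condition is left coherence, so I must show $\Rows_R$ has decidable lifts if and only if $R$ has decidable lifts in the sense of Definition~\ref{definition:computable_1}~\eqref{definition:computable_1}. A cospan $A\stackrel{\alpha}{\to}B\stackrel{\gamma}{\leftarrow}C$ in $\Rows_R$ is a pair of matrices $\alpha, \gamma$ with matching shapes, and a lift $\lambda$ with $\lambda\cdot\gamma=\alpha$ is exactly a particular solution $X$ of the linear system $X\gamma=\alpha$ (note the row convention makes composition matrix multiplication on the correct side). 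Hence deciding solvability and constructing a particular solution of $X\gamma=\alpha$ over $R$ is the same as having decidable lifts in $\Rows_R$; this is verbatim Definition~\ref{definition:computable_1}~\eqref{definition:computable_1}. Combining, $\Freyd(\Rows_R)$ computable abelian $\iff$ $\Rows_R$ has weak kernels and decidable lifts $\iff$ $R$ left coherent and has decidable lifts $\iff$ $R$ left computable.

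The proof is almost entirely bookkeeping; the one place demanding a little care is making sure the constructive reading of the existential quantifiers lines up on both sides. On the ring side, "left coherent" and "has decidable lifts" carry specified algorithms (as the Remark after Definition~\ref{definition:computable_1} insists), and on the category side, "has weak kernels" and "has decidable lifts" likewise mean "comes equipped with the relevant operation"; the translation must respect this, so I will phrase it as: from a weak-kernel operation on $\Rows_R$ one reads off the syzygy algorithms and conversely, and similarly for lifts. I expect this quantifier/algorithm matching to be the only subtle point; the underlying linear-algebra identifications are immediate from the identification $\Hom_{\Rows_R}(R^{1\times m},R^{1\times n})=R^{m\times n}$ with composition given by matrix multiplication.
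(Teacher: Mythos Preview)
Your proposal is correct and follows essentially the same approach as the paper's proof: reduce part~(1) to Theorem~\ref{theorem:freyd} via the dictionary ``$R$ left coherent $\Leftrightarrow$ $\Rows_R$ has weak kernels,'' and reduce part~(2) to Corollary~\ref{corollary:freyd} via ``$R$ has decidable lifts $\Leftrightarrow$ $\Rows_R$ has decidable lifts.'' The paper states these two equivalences without further comment, whereas you spell out the matrix translation and the matching of constructive quantifiers---this extra care is appropriate but not a different strategy.
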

\begin{proof}
 $R$ being left coherent is equivalent to $\Rows_R$ having weak kernels.
 So, the first claim follows from Theorem \ref{theorem:freyd}.
 $R$ having decidable lifts is equivalent to $\Rows_R$ having decidable lifts,
 and the second claim follows from Corollary \ref{corollary:freyd}
\end{proof}

\subsection{Finitely presented graded modules}

\begin{example}[Interpretation as finitely presented graded modules]\label{example:fpgrmod}
 Let $G$ be a group and $S$
 a $G$-graded ring, i.e., a ring with
 a direct sum decomposition  $S = \bigoplus_{d \in G} S_d$ into 
 abelian groups such that $S_{d} \cdot S_{e} \subseteq S_{d\cdot e}$ for all $d,e \in G$.
 A graded left (resp. right) module is given by a left (resp. right) $S$-module
 $M$ equipped with a direct sum decomposition $M = \bigoplus_{d \in G} M_d$
 into abelian groups such that $S_d \cdot M_e \subseteq M_{d \cdot e}$ (resp. $M_e \cdot S_d \subseteq M_{e \cdot d})$
 for all $d,e \in G$. Graded left (resp. right) $S$-module homomorphisms are 
 given by left (resp. right) $S$-module homomorphisms respecting the grading.
 We denote the corresponding abelian category by $S\grModl$ (resp. $\grModr S$).
 
 For a graded left (resp. right) module $M$ and given $e \in G$, we denote by $M(e)$ the $e$-th shift of $M$,
 i.e., the graded left module with $M(e)_d := M_{d\cdot e}$ (resp. the graded right module with $M(e)_d := M_{e\cdot d}$).
 The full subcategory generated by graded left (resp. right) modules of the form $S(e_1) \oplus \dots \oplus S(e_r)$
 for $e_1, \dots, e_r \in G$ is denoted by $\grRows_S$ (resp. $\grCols_S$).
 Morphisms in $\grRows_S$ (resp. $\grCols_S$) from $S(d_1) \oplus \dots \oplus S(d_r)$ to $S(e_1) \oplus \dots \oplus S(e_s)$
 for $d_1, \dots, d_r, e_1, \dots, e_s \in G$ can be naturally identified with
 matrices $H \in S^{r \times s}$ (resp. $H \in S^{s \times r}$) with homogeneous entries 
 $H_{ij} \in S_{d_i^{-1} \cdot e_j}$ 
 (resp. $H_{ji} \in S_{e_j \cdot d_i^{-1}}$)
 for $i=1, \dots,r, j = 1,\dots s$.
 Since $\grRows_S \subseteq S\grModl$ (resp. $\grCols_S \subseteq \grModr S$)
 are full and faithful embeddings of projective objects,
 we conclude (using Theorem \ref{theorem:classical_equivalence})
 \[
  \Freyd(\grRows_S) \simeq S\fpgrmodl \text{~(resp.~$ \Freyd(\grCols_S) \simeq \fpgrmodr S$)}
 \]
 where $S\fpgrmodl$ (resp. $\fpgrmodr$ S) is the category of finitely presented graded left (resp. right) $S$-modules.
\end{example}

For an implementation of $\Freyd(\grRows_S)$
as a computable abelian category,
we need $\grRows_S$ to have weak kernels and decidable lifts (Corollary \ref{corollary:freyd}).
These requirements for $\grRows_S$ concisely encode the following specifications needed in an actual implementation:
 we need
 data structures for elements in $G$, a constructor for the neutral element $e_G \in G$,
 algorithms for multiplication, inversion, and equality in $G$.
 Furthermore, we need
 data structures for elements in $S_d$ ($d \in G$),
 constructors for $1 \in S_{e_G}$ and $0 \in S_d$ ($d \in G$),
 algorithms $S_d \times S_{d'} \rightarrow S_{dd'}$ for multiplication ($d,d' \in G$), 
 algorithms $S_d \times S_{d} \rightarrow S_{d}$ for addition and subtraction ($d \in G$),
 an algorithm for equality in $S_d$ $(d \in G)$.
 Furthermore, for weak kernels and decidable lifts in $\grRows_S$,
 we need an algorithm for computing \emph{homogeneous} row syzygies of a matrix with \emph{homogeneous} entries,
 and an algorithm for deciding the existence and in the affirmative case
 computing a solution of a linear system $X A = B$,
 where $A,B$ and the solution $X$ 
 are matrices with \emph{homogeneous} entries.

\subsection{Finitely presented functors}

We give a general interpretation of the Freyd category
in terms of finitely presented functors (cf.~\cite[Corollary 3.9]{BelFredCats}).

\begin{example}[Classical interpretation as finitely presented functors]\label{example:fpfunctors}
 Given an additive category $\PC$,
 let
 \[
  Y: \PC \longrightarrow \Hom( \PC^{\op}, \Ab ): P \mapsto (-,P)
 \]
 be the Yoneda embedding.
 Here, $\Hom( \PC^{\op}, \Ab )$
 denotes the abelian category of contravariant
 functors from $\PC$ to the category of abelian groups $\Ab$,
 and $(-,P)$ denotes the contravariant
 $\Hom$ functor.
 By Yoneda's Lemma,
 $Y$ is full and faithful.
 Again following from Yoneda's Lemma, $(-,P)$
 is a projective object.
 We conclude (using Theorem \ref{theorem:classical_equivalence})
 \[
  \Freyd(\PC) \simeq \fp( \PC^{\op}, \Ab ),
 \]
 where $\fp( \PC^{\op}, \Ab )$ is the category of \textbf{contravariant finitely presented functors},
 i.e., the objects are functors $F: \PC^{\op} \rightarrow \Ab$ for
 which there exists $A,B \in \PC$ and an exact sequence of functors
 \begin{center}
    \begin{tikzpicture}[label/.style={postaction={
    decorate,
    decoration={markings, mark=at position .5 with \node #1;}},
    mylabel/.style={thick, draw=none, align=center, minimum width=0.5cm, minimum height=0.5cm,fill=white}}]
    \coordinate (r) at (2.5,0);
    \coordinate (u) at (0,2);
    \node (D) {$0$};
    \node (C) at ($(D) + 0.75*(r)$) {$F$};
    \node (B) at ($(C)+(r)$) {$(-,A)$};
    \node (A) at ($(B)+(r)$) {$(-,B)$,};
    
    \draw[->,thick] (A) -- (B);
    \draw[->,thick] (B) -- (C);
    \draw[->,thick] (C) -- (D);
    \end{tikzpicture}
\end{center}
and morphisms are given by natural transformations.
 Similarly, we get
 \[
  \Freyd(\PC^{\op}) \simeq \fp( \PC, \Ab ),
 \]
 where $\fp( \PC, \Ab )$ is the category of \textbf{covariant finitely presented functors},
 i.e., functors $F: \PC \rightarrow \Ab$ for
 which there exists $A,B \in \PC$ and an exact sequence of functors
 \begin{center}
    \begin{tikzpicture}[label/.style={postaction={
    decorate,
    decoration={markings, mark=at position .5 with \node #1;}},
    mylabel/.style={thick, draw=none, align=center, minimum width=0.5cm, minimum height=0.5cm,fill=white}}]
    \coordinate (r) at (2.5,0);
    \coordinate (u) at (0,2);
    \node (D) {$0$};
    \node (C) at ($(D) + 0.75*(r)$) {$F$};
    \node (B) at ($(C)+(r)$) {$(A,-)$};
    \node (A) at ($(B)+(r)$) {$(B,-)$.};
    
    \draw[->,thick] (A) -- (B);
    \draw[->,thick] (B) -- (C);
    \draw[->,thick] (C) -- (D);
    \end{tikzpicture}
\end{center}

\end{example}

We will study
finitely presented functors
in the case $\PC$ abelian
in Section \ref{section:applications_fpfunctors}.

\section{Computationally undecidable lifting and colifting problems}\label{section:undecidable}

In this section we provide several examples
concerning computationally undecidable problems.
\begin{enumerate}
 \item We give an example of
a ring $R$
with decidable equality\footnote{such rings are also called \textbf{discrete} in \cite{MRRConstructiveAlgebra}}
whose lifting and colifting problems are computationally undecidable.
\item From such an $R$,
we build a ring $P$ with decidable equality whose colifting problem is still computationally undecidable,
but now, $P$ has decidable lifts.
\item We use $P$ for the construction of an
additive category $\PC$ with decidable equality, having weak kernels,
but whose lifting problem is computationally undecidable.
\end{enumerate}

But first, we will explain what we mean by computationally undecidable problems.

\begin{definition}
 Let $\PC$ be an additive category.
 We say the \textbf{lifting problem for $\PC$ is computationally undecidable}
 if $\PC$ having decidable lifts would imply the decidability 
 of a problem that is known to be undecidable
 by Turing machines.
 We proceed analogously for colifts.
\end{definition}

\begin{definition}
 Let $R$ be a ring.
 We say the \textbf{lifting problem for $R$ is computationally undecidable}
 if the lifting problem for $\Rows_R$ is computationally undecidable.
 We can rephrase this condition in terms of equations for $R$:
 having an algorithm for deciding and finding particular solutions of left-sided equations $XA = B$
 would imply the decidability 
 of a problem that is known to be undecidable
 by Turing machines.
 We proceed analogously for colifts.
\end{definition}

\subsection{A ring with decidable equality and computationally undecidable lifting and colifting problem}\label{subsection:computable_lift_computable_colift}
We describe the famous word problem for finitely presented groups:
given a finite set $\Xcal$, let $\Fr(\Xcal)$ denote the free group over $\Xcal$.
Let furthermore $\Rcal \subseteq \Fr(\Xcal)$ be a finite subset and
$N$ the normal subgroup generated by $\Rcal$.
The word problem for $H := \Fr(\Xcal)/N$ (more precisely for $\Xcal$ and $\Rcal$) is the algorithmic problem of deciding
whether a given $w \in \Fr(\Xcal)$ represents the identity element in $H$,
i.e., whether there is an algorithm rendering the (classically trivial\footnote{by the law of excluded middle}) proposition
\[
 \forall w \in \Fr(\Xcal): (w \in N) \vee (w \not\in N)
\]
constructive. 
There are known concrete instances for $\Xcal$ and $\Rcal$
for which the word problem is undecidable \cite{BooneWord}
when we use Turing machines as a model of computation.
Let $\Xcal,\Rcal$ be such an instance.

Let $G := \Fr(\Xcal) \times \Fr(\Xcal)$ and let
$M \subseteq G$
denote the equivalence relation induced on $\Fr(\Xcal)$ by $N$,
i.e., the set of all pairs $(w_1, w_2)$ such that $w_1$ and $w_2$ represent the same element in $H$.
Then $M$ is the so-called Mihailova subgroup of $G$ (introduced in \cite{Miha58}),
and deciding whether $w \in \Fr(\Xcal)$ represents the identity in $H$
is equivalent to deciding $(w,1) \in M$.
It is easy to see that $M$ is finitely generated as a subgroup by the finite set
\[
 M' := \{ (x,x) \mid x \in \Xcal \} \cup \{ (1,r) \mid r \in \Rcal \}.
\]
We set $R := k[G]$, i.e., $R$ is the group ring of $G$ with coefficients in a field with decidable equality\footnote{
The Gaussian algorithm turns
such a field
into a
left and right computable ring.
} $k$.
Then $R$ is a $k$-algebra with decidable equality since the word problem in $G$ is decidable.
Furthermore, we claim that the lifting and colifting problems for $R$ are computationally undecidable.
This follows from the following lemma.

\begin{lemma}\label{lemma:kgideal}
 Let $k$ be a field, $G$ be a group, and $M \subseteq G$ be a subgroup.
 We define the right ideal
 \[
  I := \langle 1 - m \mid m \in M \rangle_{k[G]}
 \]
 of the group ring $k[G]$. Given $g \in G$, we have
 \[
  g \in M ~\Longleftrightarrow~ (1 - g) \in I.
 \]
 Furthermore, if $M$ is generated by the subset $M' \subseteq M$,
 then for the right ideal $I_{M'} := \langle 1 - m' \mid m' \in M' \rangle_{k[G]}$, we have
 \[
  I = I_{M'}.
 \]
\end{lemma}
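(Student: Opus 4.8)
The plan is to split the statement into its two parts and to observe that the inclusion $I_{M'}\subseteq I$ and the implication "$g\in M\Rightarrow 1-g\in I$" are immediate from the definitions, so only two things require work: the reverse inclusion $I\subseteq I_{M'}$, and the implication "$1-g\in I\Rightarrow g\in M$". The first of these is purely formal manipulation inside $k[G]$, the second uses a permutation-module argument.

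For $I\subseteq I_{M'}$, I would first record the two identities in $k[G]$
\[
 1-ab = (1-b) + (1-a)b,\qquad 1-a^{-1} = (1-a)\cdot(-a^{-1}),
\]
valid for all $a,b\in G$. Since $I_{M'}$ is a \emph{right} ideal, the first identity shows that the set $\{\,h\in G \mid 1-h\in I_{M'}\,\}$ is closed under multiplication, and the second shows it is closed under inversion; it contains $M'$ by definition of $I_{M'}$. Because $M=\langle M'\rangle$, every $m\in M$ is a word in $M'\cup (M')^{-1}$, so induction on word length gives $1-m\in I_{M'}$ for all $m\in M$, hence $I\subseteq I_{M'}$ and therefore $I=I_{M'}$.

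For the nontrivial direction of the first equivalence I would introduce the permutation module: let $k[M\backslash G]$ be the free $k$-vector space on the set of right cosets $\{Mg\mid g\in G\}$, equipped with its natural right $k[G]$-action $(Mh)\cdot x$, and let $\phi\colon k[G]\to k[M\backslash G]$ be the $k$-linear map sending a group element $h$ to its coset $Mh$. Then $\phi$ is a homomorphism of right $k[G]$-modules, and $\phi(1-m)=M-Mm=0$ for every $m\in M$; by right-linearity this forces $\phi(I)=0$, i.e.\ $I\subseteq\kernel\phi$. Now if $1-g\in I$ then $0=\phi(1-g)=M-Mg$ in $k[M\backslash G]$, and since distinct cosets are a $k$-basis this means $Mg=M$, i.e.\ $g\in M$. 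Together with the trivial converse this finishes the proof.

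The only point where care is needed — and the step I would watch most closely — is the bookkeeping of sides: $I$ and $I_{M'}$ are right ideals, so one must use right cosets $M\backslash G$ and the expansion $1-ab=(1-b)+(1-a)b$ (rather than $(1-a)+a(1-b)$, which would need closure under left multiplication), and likewise $1-a^{-1}=(1-a)(-a^{-1})$ rather than $-a^{-1}(1-a)$. Everything else is a routine verification.
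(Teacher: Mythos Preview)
Your proof is correct and follows essentially the same approach as the paper: the paper also uses the right $k[G]$-module map $k[G]\to k[M\backslash G]$ induced by $h\mapsto Mh$ to establish the implication $1-g\in I\Rightarrow g\in M$, and for $I=I_{M'}$ it uses exactly your two identities $(1-m_1)(-m_1^{-1})=1-m_1^{-1}$ and $(1-m_1)m_2+(1-m_2)=1-m_1m_2$. Your framing of the latter as showing that $\{h\in G\mid 1-h\in I_{M'}\}$ is a subgroup containing $M'$ is a clean way to organize the induction, and your explicit remark about keeping track of sides is a worthwhile addition, but the underlying argument is the same.
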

\begin{proof}
 Let $\epsilon: G \rightarrow M\backslash G$ be the canonical $G$-equivariant map from $G$ into the set of right cosets of $M$ in $G$.
 Then $\epsilon$ induces a map of right $k[G]$-modules
 \[
  k[\epsilon]: k[G] \rightarrow k[M\backslash G]
 \]
 with $I \subseteq \kernel(k[\epsilon])$. 
 If $(1-g) \in I$, then $k[\epsilon](1-g) = 0 = M - Mg$,
 which gives $M = Mg$ and thus $g \in M$.
 
 For the second claim, let $m_1,m_2 \in M$ such that $(1-m_1), (1-m_2) \in I_{M'}$.
 Then 
 \[
  (1 - m_1)(-m_1^{-1}) = 1 - m_1^{-1}\in I_{M'}
 \]
 and 
 \[
 (1-m_1)m_2 + (1-m_2) = 1 - m_1 m_2 \in I_{M'}.
 \]
\end{proof}

\begin{remark}
 The choice of formulating Lemma \ref{lemma:kgideal} 
 in terms of right ideals was arbitrary.
 It is also valid (with an analogous proof)
 in terms of left ideals.
\end{remark}

\begin{corollary}\label{corollary:R_undecidable_colifts}
 The lifting and colifting problems are computationally undecidable for $k[\Fr(\Xcal) \times \Fr(\Xcal)]$.
\end{corollary}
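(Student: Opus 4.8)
The plan is to reduce the word problem for $H = \Fr(\Xcal)/N$ --- which is undecidable for the chosen instance $\Xcal, \Rcal$ --- to the lifting, resp.\ colifting, problem for $R := k[\Fr(\Xcal)\times\Fr(\Xcal)]$, using Lemma \ref{lemma:kgideal} as the bridge. Recall from the construction preceding the corollary that $G := \Fr(\Xcal)\times\Fr(\Xcal)$, that the Mihailova subgroup $M \subseteq G$ is generated by the explicit finite set $M' = \{(x,x) \mid x\in\Xcal\} \cup \{(1,r) \mid r\in\Rcal\}$, say $M' = \{m'_1, \dots, m'_s\}$, and that deciding whether $w \in \Fr(\Xcal)$ represents the identity of $H$ is the same as deciding $(w,1) \in M$.

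For the colifting problem I would argue as follows. By Lemma \ref{lemma:kgideal}, for every $g \in G$ one has $g \in M$ if and only if $1-g$ lies in the right ideal $I_{M'} = \langle 1 - m'_1, \dots, 1 - m'_s\rangle_{R}$, i.e.\ if and only if there exist $r_1, \dots, r_s \in R$ with $\sum_{j=1}^{s} (1 - m'_j)\, r_j = 1 - g$. Packaging the $1 - m'_j$ into the $1\times s$ matrix $A$ and setting $B := (1-g) \in R^{1\times 1}$, this says precisely that the right-sided system $A X = B$ is solvable. Hence any algorithm deciding solvability of $AX = B$ over $R$ --- equivalently, any algorithm witnessing that $\Rows_R$ has decidable colifts --- would decide $g \in M$ for arbitrary $g$, and in particular, applied to $g = (w,1)$, would solve the word problem for $H$. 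Since $R$ has decidable equality (the word problem in $G$ being decidable), the undecidability here is located purely in the colifting, which is exactly what is claimed.

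For the lifting problem I would run the mirror-image argument, invoking the left-ideal variant of Lemma \ref{lemma:kgideal} noted in the remark following it: $g \in M$ iff $1-g$ lies in the left ideal $\langle 1-m'_1, \dots, 1-m'_s\rangle_R$, iff $\sum_{j} r_j\,(1 - m'_j) = 1 - g$ for some $r_j \in R$, which is the solvability of the left-sided system $X A = B$ with $A$ the $s\times 1$ matrix of the $1-m'_j$ and $B = (1-g)$. Decidability of lifts in $\Rows_R$ would again decide $(w,1)\in M$, hence the word problem.

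I do not anticipate a genuine obstacle; the only points needing care are the bookkeeping of sides --- matching right-ideal membership with the right-sided system $AX = B$ (hence colifts) and left-ideal membership with $XA = B$ (hence lifts) --- together with the observation, already implicit in the definitions, that solving such a matrix equation over $R$ is literally the problem of lifting, resp.\ colifting, the associated (co)span in $\Rows_R$.
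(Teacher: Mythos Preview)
Your proposal is correct and follows essentially the same approach as the paper: both reduce Mihailova-subgroup membership to membership in the finitely generated right (resp.\ left) ideal $I_{M'}$ via Lemma~\ref{lemma:kgideal}, then observe that this ideal-membership question is precisely the solvability of the corresponding right-sided (resp.\ left-sided) linear system, i.e., a colifting (resp.\ lifting) problem in $\Rows_R$. Your write-up is simply more explicit about the matrix bookkeeping than the paper's version.
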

\begin{proof}
 For $g \in \Fr(\Xcal) \times \Fr(\Xcal)$
 and $M'$ the finite generating set of the Mihailova subgroup,
 deciding whether $1-g$ lies in the finitely generated right ideal $I_{M'}$
is equivalent to deciding whether there exists a colift
of the diagram
\begin{center}
        \begin{tikzpicture}[label/.style={postaction={
          decorate,
          decoration={markings, mark=at position .5 with \node #1;}},
          mylabel/.style={thick, draw=none, align=center, minimum width=0.5cm, minimum height=0.5cm,fill=white}}]
          \coordinate (r) at (3.5,0);
          \coordinate (u) at (0,2);
          \node (m) {$R^{1 \times 1}$};
          \node (n) at ($(m)+(r)$) {$R^{1 \times |M'|}$.};
          \node (r1) at ($(m) + (u)$) {$R^{1 \times 1}$};
          \draw[->,thick] (m) --node[below]{$\left(1-m'\right)_{m' \in M'}$} (n);
          \draw[->,thick,dashed] (n) -- (r1);
          \draw[->,thick] (m) --node[left]{$(1 - g)$} (r1);
    \end{tikzpicture}
  \end{center}
Analogously, checking whether an element lies in a finitely generated left ideal 
can be formulated in terms of a lifting problem.
\end{proof}

\subsection{A ring with decidable equality, decidable lifts, and a computationally undecidable colifting problem}\label{subsection:computable_lift_undecidable_colift}

For simplifying our exposition, 
we axiomatize those properties of $R$
that are needed for our construction in this subsection:
let $R$ be a $k$-algebra with decidable equality
such that
\begin{enumerate}
 \item $R$ has an enumerable $k$-basis $v_1, v_2, v_3, \dots$,
 \item the colifting problem for $R$ is computationally undecidable.
\end{enumerate}
The group ring $k[\Fr(\Xcal) \times \Fr(\Xcal)]$
satisfies these requirements since $(1)$ the elements of $\Fr(\Xcal) \times \Fr(\Xcal)$
form a $k$-basis that can be enumerated
and $(2)$
the colifting problem for $k[\Fr(\Xcal) \times \Fr(\Xcal)]$ is computationally undecidable
due to Corollary \ref{corollary:R_undecidable_colifts}.

\textbf{Goal of this subsection:} the creation of a ring $P$ with decidable equality,
decidable lifts, and a computationally undecidable colifting problem.

\textbf{The main idea:}
we create $P$ from $R$ by ``adding'' operators that
help solving left-sided equations 
\begin{equation}\label{equation:xac}
X \cdot A = B 
\end{equation}
for matrices $A,B$ over $R$, but that are of no use for right-sided equations $A \cdot X = B$.

\textbf{Helpful operators:}
as a $k$-vector space, we have
$R = R^i \oplus \overline{R^i}$, where we set for $i \in \N$
\begin{itemize}
 \itemsep0.5em 
 \item $R^i := \langle v_1, \dots, v_i \rangle_k$,
 \item $\overline{R^i} := \langle v_j \mid j > i \rangle_k$.
\end{itemize}
Given matrices $A,B$ over $R$, there exists a $d \in \N$ such 
that all entries of $A, B$ lie in $R^d$.
If $(x_{mn})_{mn}$ is a solution over $R$ for the left-sided equation \eqref{equation:xac},
then each entry $x_{mn}$ could be replaced 
by a $k$-linear operator $\omega_{mn}$ in 
\[
\Omega^d := \End_k( R^d ) \simeq k^{d \times d},
\]
where $\omega_{mn}$ acts like $x_{mn}$ on $R^d$ and projects the result back into $R^d$.
The crucial observation is the following:
replacing the matrix $(x_{mn})_{mn}$ by $( \omega_{mn} )_{mn}$ still gives a solution for the left-sided equation \eqref{equation:xac}.

Now, finding a solution of \eqref{equation:xac} with entries in $\Omega^d$ (where $A$ and $B$ are still defined over $R$)
can be done by making an ansatz and using linear algebra.
If no solution with entries in $\Omega^d$ exists, then there is also no solution with entries in $R$.
This motivates to ``add'' the operator spaces $\Omega^i$ for all $i \in \N$
to $R$, but in an ``unbalanced way'', i.e.,
the $\Omega^i$ must not be helpful for solving right-sided equations.

\textbf{The details:}
we extend the action of an operator $\omega \in \Omega^i$
from $R^i$ to $R$ by setting $\omega( \overline{R^i} ) = 0$.

\begin{construction}
 We construct a category $\CC$ enriched over $k$ as follows:
\begin{enumerate}
 \item $\CC$ consists of three objects that we denote by $z_1, z_2, z_3$.
 \item The homomorphism $k$-vector spaces in $\CC$ are determined by the 
 following diagram:
 \begin{center}
    \begin{tikzpicture}[label/.style={postaction={
          decorate,
          decoration={markings, mark=at position .5 with \node #1;}},
          mylabel/.style={thick, draw=black, align=center, minimum width=0.5cm, minimum height=0.5cm,fill=white}}]
          \coordinate (r) at (4.5,0);
          \coordinate (u) at (0,1);
          \node (A) {$z_1$};
          \node (B) at ($(A) + (r)$) {$z_2$};
          \node (C) at ($(B) + (r)$) {$z_3$};
          \draw[->,thick] (A) --node[above]{$R \oplus \left(\bigoplus_{i \in \N} \Omega^i\right)$} (B);
          \draw[->,thick] (B) --node[above]{$R$} (C);
    \end{tikzpicture}
 \end{center}
 This means, $\Hom_{\CC}(z_1, z_2) = R \oplus \left(\bigoplus_{i \in \N} \Omega^i\right)$,
 $\Hom_{\CC}(z_2, z_3) = R$, $\Hom(z_j, z_j) = k$ for $j = 1,2,3$,
 and $\Hom_{\CC}(z_1,z_3) = R$. The other $\Hom$-sets are given by $0$.
 Composition of two consecutive arrows $z_1 \rightarrow z_2$ and $z_2 \rightarrow z_3$
 is induced by the operations of $R$ and $\Omega^i$ on $R$.
\end{enumerate}
\end{construction}

Next, let $P$ denote the (path) algebra of $\CC$, i.e., its underlying vector space is
\begin{equation}\label{equation:definition_KCC}
P = \bigoplus_{i=1}^3\Hom_{\CC}(z_i,z_i) \oplus \Hom_{\CC}(z_1,z_2) \oplus \Hom_{\CC}(z_2,z_3) \oplus \Hom_{\CC}(z_1,z_3)  
\end{equation}
with multiplication given by composition in $\CC$ and
setting the multiplication of two non-composable morphisms to $0$.
Then $P$ has decidable equality.
%
%
Given an element $x \in P$,
we will write 
\[
 x = x^{(1,1)} + x^{(2,2)} + x^{(3,3)} + x^{(1,2)} + x^{(2,3)} + x^{(1,3)}
\]
for its decomposition w.r.t.\ the direct sum decomposition \eqref{equation:definition_KCC}.

\begin{lemma}\label{lemma:kCcolifts}
 The colifting problem is computationally undecidable for $P$.
\end{lemma}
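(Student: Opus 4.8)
The plan is to reduce the colifting problem for $R$, which is computationally undecidable by Corollary~\ref{corollary:R_undecidable_colifts}, to the colifting problem for $P$. Exactly as for $R$, the colifting problem for $P$ rephrases as the demand for an algorithm that, given matrices $A, B$ over $P$, decides whether the right-sided equation $A \cdot X = B$ is solvable and, in the affirmative case, produces a particular solution $X$. So it is enough to encode an arbitrary right-sided equation $A' \cdot X' = B'$ over $R$ (with $A' \in R^{\ell \times m}$, $B' \in R^{\ell \times n}$) as such an equation over $P$, in a way that lets one pass computably between solutions.

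First I would build the encoding: let $A \in P^{\ell \times m}$ have as entries the $a'_{ij}$, each viewed as an element of the $R$-summand of $\Hom_{\CC}(z_1, z_2)$, and let $B \in P^{\ell \times n}$ have as entries the $b'_{ij}$, each viewed as an element of $\Hom_{\CC}(z_1, z_3) = R$. I would then show that $A \cdot X = B$ is solvable over $P$ if and only if $A' \cdot X' = B'$ is solvable over $R$: from any $P$-solution $X$ one recovers an $R$-solution by keeping, for each entry $X_{ij} = X_{ij}^{(1,1)} + X_{ij}^{(2,2)} + X_{ij}^{(3,3)} + X_{ij}^{(1,2)} + X_{ij}^{(2,3)} + X_{ij}^{(1,3)}$, only the component $X_{ij}^{(2,3)} \in \Hom_{\CC}(z_2,z_3) = R$ (a computable operation on $P$), while conversely any $R$-solution, inserted entrywise into $\Hom_{\CC}(z_2,z_3)$, is a $P$-solution.

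The core computation is that of $A \cdot X$ for an arbitrary matrix $X$ over $P$. Writing $e_j = \id_{z_j}$ and using $A_{ik} = e_1 A_{ik} e_2$, only the parts of $X_{kj}$ ``starting at $z_2$'' contribute, namely $X_{kj}^{(2,2)} \in \Hom_{\CC}(z_2,z_2) = k$ and $X_{kj}^{(2,3)} \in \Hom_{\CC}(z_2,z_3) = R$, so that
\[
 (A \cdot X)_{ij} = \Big( \sum_k X_{kj}^{(2,2)} \cdot A_{ik} \Big) \; + \; \Big( \sum_k A_{ik} \cdot X_{kj}^{(2,3)} \Big),
\]
the first summand lying in $\Hom_{\CC}(z_1,z_2)$ and the second in $\Hom_{\CC}(z_1,z_3) = R$; moreover the second summand equals $(A' \cdot X'')_{ij}$ since composing an $R$-arrow $z_1 \to z_2$ with an $R$-arrow $z_2 \to z_3$ is multiplication in $R$. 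As $B_{ij}$ lies purely in $\Hom_{\CC}(z_1,z_3)$, the equation $A \cdot X = B$ holds if and only if $\sum_k X_{kj}^{(2,2)} \cdot A_{ik} = 0$ for all $i,j$ and $A' \cdot X'' = B'$, which gives both directions of the equivalence at once.

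I expect the only delicate point to be this last computation --- concretely, checking that the operator summands $\Omega^i$ and the scalar endomorphisms $\Hom_{\CC}(z_j,z_j) = k$ cannot be exploited to solve $A \cdot X = B$ over $P$ when $A' \cdot X' = B'$ is unsolvable over $R$. This is exactly the asymmetry engineered into $\CC$: the operators $\Omega^i$ sit in $\Hom_{\CC}(z_1,z_2)$, hence can occur in an entry of $X$ only as the component of a morphism $z_1 \to z_2$, and left-multiplying such a component by the $z_1 \to z_2$-arrows that make up $A$ forces a composite of two arrows both emanating from $z_1$, which is $0$ --- the operators would only help for the left-sided equations relevant to lifts. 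Once the reduction is in place, an algorithm for the colifting problem of $P$ would decide and solve every right-sided equation over $R$, hence by Corollary~\ref{corollary:R_undecidable_colifts} the word problem for $\Xcal, \Rcal$, which is undecidable by Turing machines; so the colifting problem for $P$ is computationally undecidable.
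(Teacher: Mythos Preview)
Your proposal is correct and follows essentially the same approach as the paper: you reduce the colifting problem for $R$ to that for $P$ by placing the entries of $A'$ into the $R$-summand of $\Hom_{\CC}(z_1,z_2)$ and those of $B'$ into $\Hom_{\CC}(z_1,z_3)$, and you pass between solutions by extracting or inserting the $(2,3)$-component. The paper's proof is terser, merely asserting that the $(2,3)$-projection of a $P$-solution is an $R$-solution; your explicit computation of $A\cdot X$ and the resulting splitting into $(1,2)$- and $(1,3)$-parts makes this step transparent and also makes precise why the $\Omega^i$ cannot help on the right.
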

\begin{proof}
 We reduce the computationally undecidable colifting problem of $R$
 to the colifting problem of $P$.
 Given matrices $A,B$ over $R$, 
 we interpret the entries of $A$ as
 elements in $\Hom_{\CC}( z_1, z_2 )$ and the entries in 
 $B$ as elements in $\Hom_{\CC}(z_1,z_3)$.
 Then $A \cdot X = B$ has a solution over $R$
 if and only if it has a solution over $P$, since
 a solution over $R$ can be interpreted as
 a solution with entries in $\Hom_{\CC}(z_2,z_3) \subseteq P$,
 and a solution $X = (x_{mn})_{mn}$ over $P$
 gives rise to a solution $(x_{mn}^{(2,3)})_{mn}$
 with entries in $\Hom_{\CC}(z_2,z_3) = R$.
\end{proof}

For the proof of the next lemma, we
introduce the following spaces for all $i \in \N$:
\begin{itemize}
\itemsep0.5em
 \item $\Hom_{\CC}^i(z_1,z_2) := R^i \oplus (\bigoplus_{j \leq i} \Omega^j) \subseteq \Hom_{\CC}(z_1,z_2)$ 
 \item $\overline{\Hom_{\CC}^i}(z_1,z_2) := \overline{R^i} \oplus (\bigoplus_{j > i} \Omega^j) \subseteq \Hom_{\CC}(z_1,z_2)$
 \item $\Hom_{\CC}^i(z_m,z_n) := R^i \subseteq \Hom_{\CC}(z_m,z_n)$ for $(m,n) \in \{(1,3),(2,3)\}$
 \item $\overline{\Hom_{\CC}^i}(z_m,z_n) := \overline{R^i} \subseteq \Hom_{\CC}(z_m,z_n)$ for $(m,n) \in \{(1,3),(2,3)\}$
 \item $P^i := \bigoplus_{i=1}^3\Hom_{\CC}(z_i,z_i) \oplus \Hom_{\CC}^i(z_1,z_2) \oplus \Hom_{\CC}^i(z_2,z_3) \oplus \Hom_{\CC}^i(z_1,z_3)$
 \item $\overline{P^i} := \overline{\Hom_{\CC}^i}(z_1,z_2) \oplus \overline{\Hom_{\CC}^i}(z_2,z_3) \oplus \overline{\Hom_{\CC}^i}(z_1,z_3)$
\end{itemize}
Given an element $x \in P$,
we will write 
\[
x = x^i + \overline{x^i} 
\]
for its decomposition in $P^i \oplus \overline{P^i}$.

\begin{theorem}\label{theorem:kC_lifts}
 $P$ has decidable lifts.
\end{theorem}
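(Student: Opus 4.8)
The plan is to reduce an arbitrary lifting problem over $P$ to finitely many systems of $k$-linear equations in finitely many unknowns, each of which is then decided and solved by Gaussian elimination. So let $A \in P^{m \times n}$ and $B \in P^{\ell \times n}$ be given; we must decide whether some $X \in P^{\ell \times m}$ satisfies $X A = B$ and, if so, produce one. Since $P^{1 \times \ell}$ is free, $XA = B$ is solvable iff every row of $B$ lies in $\{\, v A \mid v \in P^{1 \times m}\,\}$, so it suffices to treat the single equation $x A = b$ with $x \in P^{1 \times m}$, $b \in P^{1 \times n}$. Write $e_1, e_2, e_3 \in P$ for the idempotents given by the identity morphisms of $z_1, z_2, z_3$. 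Multiplying $xA = b$ on the left by $e_i$ and on the right by $e_l$ and inserting $\id_P = e_1 + e_2 + e_3$ in the middle, the equation splits into one equation $\sum_{i \leq j \leq l} (e_i x e_j)(e_j A e_l) = e_i b e_l$ for each pair $(i,l)$ with $i \leq l$. These six equations fall into three clusters involving pairwise disjoint sets of unknowns, namely $\{(3,3)\}$ (unknown $e_3 x e_3$), $\{(2,2),(2,3)\}$ (unknowns $e_2 x e_2,\, e_2 x e_3$), and $\{(1,1),(1,2),(1,3)\}$ (unknowns $e_1 x e_1,\, e_1 x e_2,\, e_1 x e_3$), so the three clusters may be solved independently.

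Let $N$ be a computable bound such that every entry of $A$ and of $B$ lies in $P^N$, and write $\pi_N \colon P \to P^N$ for the $k$-linear projection along $\overline{P^N}$, likewise $\pi_N \colon R \to R^N$. The clusters $\{(3,3)\}$ and $\{(2,2),(2,3)\}$ are the easy ones: in them every $R$-valued unknown occurs only right-multiplied by the fixed $k$-valued block $e_3 A e_3$, and $\pi_N$ is $k$-linear, so applying $\pi_N$ entrywise to any solution yields a solution with all entries in $R^N \oplus k$; hence these clusters reduce to finite $k$-linear systems.

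The cluster $\{(1,1),(1,2),(1,3)\}$ is the heart, because the $\Hom_{\CC}(z_1,z_2)$-valued unknown $e_1 x e_2$ is right-multiplied by the \emph{$R$-valued} block $e_2 A e_3$ in the $(1,3)$-equation; over $R$ this would be an instance of the undecidable left-sided problem, and it is precisely to repair it that the spaces $\Omega^i$ were added. The $(1,1)$- and $(1,2)$-equations impose finite $k$-linear constraints on $e_1 x e_1$ and force $e_1 x e_2$ to lie in a coset $\Psi + \Phi K$, where $\Psi$ has entries in the finite-dimensional space $\Hom_{\CC}(z_1,z_2) \cap P^N$ (the $(1,2)$-equation has the $k$-matrix $e_2 A e_2$ as its right multiplier, so a particular solution of this shape and the kernel $\{W : W(e_2 A e_2) = 0\} = \{\Phi K\}$, with $K$ a $k$-matrix whose rows span $\ker(e_2Ae_2)$, both exist) and $\Phi$ ranges freely over matrices over $S := \Hom_{\CC}(z_1,z_2) = R \oplus \bigoplus_i \Omega^i$. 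Substituting into the $(1,3)$-equation and absorbing the terms of already-bounded support, the cluster reduces to deciding an equation of the form
\[
 \Phi \cdot C + Z \cdot F \;=\; D ,
\]
where $C = K \cdot (e_2 A e_3)$ and $D$ are matrices over $R$ all of whose entries lie in $R^{M}$ for a computable $M \geq N$, $F = e_3 A e_3$ is a $k$-matrix, $\Phi$ ranges over matrices over $S$ (with $\Phi \cdot C$ the product induced by composition in $\CC$, i.e.\ $\omega \cdot a = \omega(a)$ for $\omega \in \Omega^i$ with the extended action $\omega(\overline{R^i}) = 0$), and $Z$ ranges over matrices over $R$. The \textbf{key lemma} is that such an equation has a solution iff it has one with $\Phi$ over $\Omega^{M}$ and $Z$ over $R^{M}$, which is a finite $k$-linear system: given a solution $(\Phi_0, Z_0)$, put $\Phi_{bc} := [\, v \mapsto \pi_M((\Phi_0)_{bc} \cdot v)\,] \in \End_k(R^M) = \Omega^M$ and $Z := \pi_M(Z_0)$ entrywise; since $\pi_M$ is $k$-linear and the entries of $C$ and $D$ lie in $R^M$, one checks $\Phi \cdot C + Z \cdot F = \pi_M(\Phi_0 \cdot C + Z_0 \cdot F) = \pi_M(D) = D$.

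The main obstacle is the bookkeeping in the third cluster: one must verify carefully that nothing is lost in passing from $x$ to the bounded data above — the load-bearing points being that in the $(1,2)$-equation $e_1 x e_2$ meets only the $k$-matrix $e_2 A e_2$ (so a bounded particular solution exists and the free part is $\Phi K$ with $\Phi$ over $S$), that $e_1 x e_3$ meets only the $k$-matrix $e_3 A e_3$ (so it is projectable), and that the residual $R$-valued freedom reaches the $(1,3)$-equation only through a right multiplication by an $R$-matrix, which is exactly what lets it be folded into the $S$-valued unknown $\Phi$ of the key lemma. Finally, all the bounds used ($N$, $M$, and the degree bounds incurred by multiplying bounded elements of $R$) are computable because $R = k[G]$ is a group ring: the product of two basis elements is again a basis element, whose index in the fixed enumeration of $G$ is computable since the word problem in $G$ is decidable.
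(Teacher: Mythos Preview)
Your argument is correct and shares its technical heart with the paper's proof: the replacement of an arbitrary $S$-valued entry $\Phi_{bc}$ by the finite-rank operator $v\mapsto\pi_M(\Phi_{bc}\cdot v)$ in $\Omega^M$ is exactly the map the paper calls $\tau$, and your key lemma is the crux of the paper's verification. The packaging, however, differs. The paper never decomposes via idempotents or solves in stages: it writes down a single $k$-linear map $\sigma\colon P\to P^{d'}$ (namely $x\mapsto x^d+\tau(\overline{x^d})$, with $\tau$ sending the overflow $(2,3)$-part to $0$, the overflow $(1,3)$-part to its $R^{d'}$-component, and the overflow $(1,2)$-part to the associated operator in $\Omega^{d'}$) and proves by one direct computation that $\sigma(X)\cdot A=X\cdot A$ whenever the entries of $A$ and of $X\cdot A$ lie in $P^d$. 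This bypasses your parametrisation of the $(1,1)$--$(1,2)$ solutions and hence the need to track the finite parameter implicit in $e_1xe_1$ through to the $(1,3)$-equation. In your write-up that parameter silently makes the $D$ of your key lemma depend on $e_1xe_1$; the argument still goes through (the truncation in the key lemma does not touch $e_1xe_1$, so one obtains a finite $k$-linear system jointly in $e_1xe_1$, $\Phi\in\Omega^M$, and $Z\in R^M$), but this step deserves to be made explicit. Your route has the virtue of making the three independent sub-problems visible; the paper's has the virtue of avoiding all the staging and bookkeeping.
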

\begin{proof}
 Given matrices $A,B$ over $P$, we need to decide whether there
 exists a matrix $X$ over $P$ such that $X \cdot A = B$,
 and in the affirmative case construct such a matrix.
 There exist $d, d' \in \N$ such that all entries of $A$ and $B$ lie in $P^d$
 and such that such that $R^d \cdot R^d \subseteq R^{d'}$
 (which is actually equivalent to $P^d \cdot P^d \subseteq P^{d'}$).
 Our main claim is the following: 
 \begin{center}
 \fbox{
 $\exists X$ over $P$
 solving $X \cdot A = B$ $~\Leftrightarrow~$ $\exists X$ with all entries in $P^{d'}$ solving $X \cdot A = B$.
 }
 \end{center}
 Once we know that our main claim is true, $X \cdot A = B$ can simply be solved
 by making an ansatz with all entries of $X$ lying in $P^{d'}$.
 This ansatz yields a linear system over $k$ and can be dealt with since $k$ is a field with decidable equality.
 
 The ``only if'' direction of our main claim is the hard part, so let us assume that we are given a solution
 $X$ over $P$. 
 We define an operator $\sigma: P \rightarrow P^{d'}$
 with the idea that entrywise applied to $X$ it will also yield a solution:
 \[
  \sigma(x) := x^d + \tau( \overline{x^d} ),
 \]
 where we define
  \begin{align*}
  \tau: \overline{P^d} &\rightarrow P^{d'}: \\
  \overline{\Hom_{\CC}^d(z_1,z_2)} \ni p &\mapsto \left(\tau(p):R^{d'} \rightarrow R^{d'}: y \mapsto (p \cdot y)^{d'}\right)\\
  \overline{\Hom_{\CC}^d(z_2,z_3)} \ni q &\mapsto 0 \\
  \overline{\Hom_{\CC}^d(z_1,z_3)} \ni r &\mapsto r^{d'}
 \end{align*}
 Here, $\tau(p)$ denotes the operator in $\Omega^{d'} \subseteq \Hom_{\CC}(z_1,z_2)$
 that maps $y \in R^{d'} \subseteq \Hom_{\CC}(z_2,z_3)$ to $(p \cdot y)^{d'} \in R^{d'} \subseteq \Hom_{\CC}(z_1,z_3)$.
 
 We show that $\sigma$ entrywise applied to $X$ yields a solution:
 given $x_1, \dots, x_n \in P$ and $a_1, \dots, a_n \in P^d$ for $n \in \N_0$ such that
 $\sum_{i=1}^nx_i \cdot a_i \in P^d$, we claim that
 \[
  \sum_{i=1}^nx_i \cdot a_i = \sum_{i=1}^n \sigma(x_i) \cdot a_i.
 \]
 For this, it suffices to show that
 \begin{equation}\label{equation:claim1}
  \sum_{i=1}^n \overline{x_i^d} \cdot a_i = \sum_{i=1}^n \tau(\overline{x_i^d}) \cdot a_i.
 \end{equation}
 Due to our choice of $d'$, the term $\sum_{i=1}^n \overline{x_i^d} \cdot a_i = \sum_{i=1}^n x_i \cdot a_i - \sum_{i=1}^n x_i^d \cdot a_i$ lies in
 \begin{equation}\label{equation:subset}
  P^d \cdot P^d \subseteq \bigoplus_{i=1}^3\Hom_{\CC}(z_i,z_i) \oplus \Hom_{\CC}^d(z_1,z_2) \oplus \Hom_{\CC}^d(z_2,z_3) \oplus \Hom_{\CC}^{d'}(z_1,z_3).
 \end{equation}
 Now, to simplify our notation, we write
 \[
 \overline{x_i^d} = p_i + q_i + r_i \in \overline{\Hom_{\CC}^d}(z_1,z_2) \oplus \overline{\Hom_{\CC}^d}(z_2,z_3) \oplus \overline{\Hom_{\CC}^d}(z_1,z_3)
 \]
 for the decomposition of $\overline{x_i^d}$ w.r.t.\ the above direct sum.
 Rewriting the claim \ref{equation:claim1} gives 
 \begin{equation}\label{equation:claim2}
  \sum_{i=1}^n (p_i + q_i + r_i) \cdot a_i = \sum_{i=1}^n (\tau(p_i) + r_i^{d'} ) \cdot a_i.
 \end{equation}
 We have
 \[
  \sum_{i=1}^n q_i \cdot a_i = 0,
 \]
 since
 \[
  \sum_{i=1}^n q_i \cdot a_i = \sum_{i=1}^n q_i \cdot a_i^{(3,3)}
 \]
 both lies in $\overline{\Hom_{\CC}(z_2,z_3)^d}$ (since the $a_i^{(3,3)}$ act like scalars) and $\Hom_{\CC}(z_2,z_3)^d$ (due to \eqref{equation:subset}).
 Thus, the claim \eqref{equation:claim2} simplifies to
 \[
  \sum_{i=1}^n (p_i + r_i) \cdot a_i = \sum_{i=1}^n (\tau(p_i) + r_i^{d'} ) \cdot a_i,
 \]
 which is equivalent to
 \[
  \sum_{i=1}^n (p_i + r_i) \cdot (a_i^{(2,2)} + a_i^{(2,3)} + a_i^{(3,3)}) = \sum_{i=1}^n (\tau(p_i) + r_i^{d'} ) \cdot (a_i^{(2,2)} + a_i^{(2,3)} + a_i^{(3,3)}),
 \]
 which in turn is equivalent to the two equations
 \begin{equation}\label{equation:claim3}
  \sum_{i=1}^n p_i\cdot a_i^{(2,2)} = \sum_{i=1}^n \tau(p_i)\cdot a_i^{(2,2)}
 \end{equation}
 and
 \begin{equation}\label{equation:claim4}
  \sum_{i=1}^n p_i\cdot a_i^{(2,3)} + r_i\cdot a_i^{(3,3)} = \sum_{i=1}^n \tau(p_i)\cdot a_i^{(2,3)} + r_i^{d'}\cdot a_i^{(3,3)}.
 \end{equation}
 First, we deal with \eqref{equation:claim3}:
 \[
  \sum_{i=1}^n p_i\cdot a_i^{(2,2)} = 0
 \]
 since this term both lies in 
 $\overline{\Hom_{\CC}(z_1,z_2)^d}$ (since the $a_i^{(2,2)}$ act like scalars) and $\Hom_{\CC}(z_1,z_2)^d$ (due to \eqref{equation:subset}).
 Because $\tau$ is a linear map, we also have
 \[
  \sum_{i=1}^n \tau(p_i)\cdot a_i^{(2,2)} = 0.
 \]
 Finally, we deal with \eqref{equation:claim4}:
 \begin{align*}
  \sum_{i=1}^n p_i\cdot a_i^{(2,3)} + r_i\cdot a_i^{(3,3)} &= (\sum_{i=1}^n p_i\cdot a_i^{(2,3)} + r_i\cdot a_i^{(3,3)})^{d'} & \text{(due to \eqref{equation:subset})} \\
                                                           &= \sum_{i=1}^n (p_i\cdot a_i^{(2,3)})^{d'} + r_i^{d'}\cdot a_i^{(3,3)}& \text{(linearity)} \\
                                                           &= \sum_{i=1}^n \tau(p_i)\cdot a_i^{(2,3)} + r_i^{d'}\cdot a_i^{(3,3)}& \text{(definition of $\tau$).} \\
 \end{align*}

\end{proof}

\subsection{An additive category with decidable equality, having kernels, and a computationally undecidable lifting problem}\label{subsection:add_cat_dec_eq_kernels_undec_lifts}
Let $P$ be the ring constructed in Subsection \ref{subsection:computable_lift_undecidable_colift}.
It has decidable lifts but a computationally undecidable colifting problem.
We set $\PC := \mathcal{A}( \Rows_{P} )^{\op}$.
Then $\PC$ is additive 
with decidable equality,
since $P$ has decidable lifts.
Furthermore it has weak kernels (even kernels),
since $\mathcal{A}( \Rows_{P} )$
has cokernels by Construction \ref{construction:cokernel}.

However, the lifting problem for $\PC$ is computationally undecidable:
an algorithm for deciding lifts in $\PC = \mathcal{A}( \Rows_{P} )^{\op}$,
i.e., colifts in $\mathcal{A}( \Rows_{P} )$,
immediately gives an algorithm for deciding colifts in $\Rows_{P}$
(because $\Rows_{P} \subseteq \Freyd( \Rows_{P} )$ is a full embedding).
But the colifting problem is computationally undecidable for $P$ by Lemma \ref{lemma:kCcolifts}.
We have proven the main theorem of this section:

\begin{theorem}\label{theorem:add_cat_dec_eq_kernels_undec_lifts}
The category $\Freyd(\Rows_{P})^{\op}$ is additive with decidable equality,
has kernels, but
its lifting problem is computationally undecidable.
\end{theorem}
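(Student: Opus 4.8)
The plan is to establish the four claims of the theorem one at a time, each by transporting a property of $\Freyd( \Rows_P )$ along the passage to the opposite category, and then to obtain the undecidability statement by a straightforward reduction to Lemma~\ref{lemma:kCcolifts}. Throughout, $P$ is the ring built in Subsection~\ref{subsection:computable_lift_undecidable_colift}, and we abbreviate $\PC := \Freyd( \Rows_P )^{\op}$.

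\emph{Additivity, decidable equality, kernels.} Since $\Freyd( \Rows_P )$ is additive (it inherits the additive structure from the category of arrows of $\Rows_P$, as recorded in the remark following the definition of the Freyd category), the opposite category $\PC$ is additive as well. For decidable equality: by Theorem~\ref{theorem:kC_lifts} the ring $P$ has decidable lifts, which is equivalent to $\Rows_P$ having decidable lifts; hence $\Freyd( \Rows_P )$ is computable, i.e.\ has decidable equality of morphisms (this is exactly the equivalence recorded in the remark preceding Corollary~\ref{corollary:freyd}), and decidable equality is trivially inherited by $\PC$ because the hom-sets and the equality relation of $\PC$ are literally those of $\Freyd( \Rows_P )$. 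Finally, Construction~\ref{construction:cokernel} produces, for any additive category, cokernels together with their universal property in the associated Freyd category; applying this to $\Rows_P$ and dualising shows that $\PC = \Freyd( \Rows_P )^{\op}$ has kernels.

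\emph{Undecidability of the lifting problem.} Suppose, towards a contradiction, that $\PC$ has decidable lifts. Unwinding the definition of the opposite category, an algorithm deciding lifts of cospans in $\PC$ is the same datum as an algorithm deciding colifts of spans in $\Freyd( \Rows_P )$; in particular it decides colifts of spans $A \stackrel{\alpha}{\longleftarrow} B \stackrel{\gamma}{\longrightarrow} C$ all of whose objects and morphisms lie in the full subcategory $\Rows_P \subseteq \Freyd( \Rows_P )$. For such a span, fullness of the embedding forces any colift $\lambda: C \to A$ taken in $\Freyd( \Rows_P )$ to already be a morphism of $\Rows_P$, so a colift exists in $\Rows_P$ if and only if one exists in $\Freyd( \Rows_P )$. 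Thus we would obtain an algorithm deciding colifts in $\Rows_P$, equivalently an algorithm deciding solvability of (and producing particular solutions of) right-sided matrix equations $A \cdot X = B$ over $P$ — that is, an algorithm solving the colifting problem for $P$. This contradicts Lemma~\ref{lemma:kCcolifts}, which finishes the argument.

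\emph{Where the difficulty lies.} There is no genuinely hard step inside this theorem itself: it is an assembly of the constructions for (co)kernels in Freyd categories together with the ring $P$ of Subsection~\ref{subsection:computable_lift_undecidable_colift}. The substantial technical work is already absorbed into Theorem~\ref{theorem:kC_lifts}, where the ``unbalanced'' operator construction yields a $P$ with decidable lifts even though its colifting problem stays undecidable. The only point demanding care in the present argument is to track correctly how the opposite-category flip exchanges ``lift'' and ``colift'': lifts in $\PC$ correspond to colifts in $\Freyd( \Rows_P )$ and hence to right-sided equations over $P$, so the undecidable instance we reduce from must be the \emph{colifting} problem of $P$ from Lemma~\ref{lemma:kCcolifts}, not its lifting problem.
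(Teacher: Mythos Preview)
Your proof is correct and follows essentially the same route as the paper's: both establish additivity and decidable equality from $P$ having decidable lifts, obtain kernels in $\PC$ by dualising the cokernels of Construction~\ref{construction:cokernel}, and derive undecidability of lifts in $\PC$ by reducing colifts in $\Freyd(\Rows_P)$ to colifts in the full subcategory $\Rows_P$ and invoking Lemma~\ref{lemma:kCcolifts}. Your version is somewhat more explicit about why fullness of $\Rows_P \hookrightarrow \Freyd(\Rows_P)$ suffices for the reduction, but the argument is the same.
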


\begin{remark}
 Theorem \ref{theorem:add_cat_dec_eq_kernels_undec_lifts}
 shows that even if we have an additive category
 with decidable equality and weak kernels
 (this includes having an algorithm for lifting those cospans
 representing a test situation for the weak kernel),
 we cannot expect to derive an algorithm for 
 solving the lifting problem from these data.
\end{remark}

\begin{remark}\label{remark:abelian_not_computable}
$\Freyd(\Freyd(\Rows_{P})^{\op})$ is
abelian by Theorem \ref{theorem:freyd}.
However, $\Freyd(\Freyd(\Rows_{P})^{\op})$
being computable abelian
implies $P$ having decidable colifts,
which is a computationally undecidable problem.
This behavior is not uncommon in our constructive setup.
Another example of such an abelian category is the category of unbounded chain complexes of
finite dimensional $k$-vector spaces for a field $k$ with decidable equality.
\end{remark}

\section{Lifts and homomorphism structures}\label{section:lifts}

We are going to address the problem of computing lifts in 
the Freyd category $\mathcal{A}(\PC)$ for an additive category $\PC$.
To this end, we take a look at the following diagram:
\begin{center}
        \begin{tikzpicture}[label/.style={postaction={
          decorate,
          decoration={markings, mark=at position .5 with \node #1;}},
          mylabel/.style={thick, draw=none, align=center, minimum width=0.5cm, minimum height=0.5cm,fill=white}}]
          \coordinate (r) at (4.5,0);
          \coordinate (u) at (0,2);
          \node (m) {$(B \stackrel{\rho_B}{\longleftarrow} R_B)$};
          \node (n) at ($(m)+(r)$) {$(C \stackrel{\rho_C}{\longleftarrow} R_C)$.};
          \node (r1) at ($(m) + (u)$) {$(A \stackrel{\rho_A}{\longleftarrow} R_A)$};
          \node (al) at ($(r1) - 0.15*(r) - 0.1*(u)$) {};
          \node (bl) at ($(m) - 0.15*(r) + 0.1*(u)$) {};
          \node (br) at ($(m) + 0.15*(r) + 0.1*(u)$) {};
          \draw[->,thick] (n) --node[below]{$\{ \gamma, \rho_{\gamma} \}$} (m);
          \draw[->,thick,dashed] (r1) --node[above, inner sep = 0.7em]{$\{ X,Y \}$} (n);
          \draw[->,thick] (al) --node[left]{$\{ \alpha, \rho_{\alpha} \}$} (bl);
          \draw[->,thick,dotted,label={[right]{$Z$}}] (al) to (br);
    \end{tikzpicture}
  \end{center}
The solid arrows represent given morphisms in $\mathcal{A}(\PC)$,
the dashed morphism $\{X,Y\}$ in $\mathcal{A}(\PC)$ is the lift
which we want to compute (with unknowns $X,Y$), i.e., it has to satisfy
\begin{equation}\label{equation:welldef}
 \rho_A \cdot X = Y \cdot \rho_C
\end{equation}
for being well-defined and
\begin{equation}\label{equation:lift}
 X \cdot \gamma - \alpha = Z \cdot \rho_B
\end{equation}
for being a lift, where the unknown $Z$ is a witness for $\alpha$ and $X \cdot \gamma$ being equal.
Thus, computing a lift in $\mathcal{A}(\PC)$
means finding morphisms $X,Y,Z$ in $\PC$ satisfying equations \eqref{equation:welldef} and \eqref{equation:lift}.

\subsection{Linear systems in additive categories}

In order to develop a strategy for solving equations \eqref{equation:welldef} and \eqref{equation:lift},
we discuss arbitrary linear systems in $\PC$.

\begin{definition}\label{definition:linear_system}
 Let $\PC$ be an additive category.
 A \textbf{linear system in $\PC$}
 with $m \in \N$ equations in $n \in \N$ indeterminates
 is defined by the following data:
 \begin{enumerate}
  \item Objects $(A_i)_i, (D_i)_i$ and $(B_j)_j, (C_j)_j$ in $\PC$ for $i = 1, \dots, m$, $j = 1, \dots, n$.
  \item Morphisms $(\alpha_{ij}: A_i \rightarrow B_j)_{ij}$ and $(\beta_{ij}: C_j \rightarrow D_i)_{ij}$ in $\PC$ for $i = 1, \dots, m$, $j = 1, \dots, n$.
  \item Morphisms $(\gamma_i: A_i \rightarrow D_i)_i$ in $\PC$ for $i = 1, \dots, m$.
 \end{enumerate}
 A \textbf{solution} is given by morphisms $(X_j: B_j \rightarrow C_j)_{j=1,\dots,n}$
 such that the equations
\[
 \begin{array}{ccccccccc} 
 \alpha_{11} \cdot X_1 \cdot \beta_{11} &+& \alpha_{12} \cdot X_2 \cdot \beta_{12} &+& \dots &+& \alpha_{1n} \cdot X_n \cdot \beta_{1n} &=& \gamma_1 \\
 \vdots & & \vdots& & \vdots& & \vdots& &\vdots \\
 \alpha_{m1} \cdot X_1 \cdot \beta_{m1} &+& \alpha_{m2} \cdot X_2 \cdot \beta_{m2} &+& \dots &+& \alpha_{mn} \cdot X_n \cdot \beta_{mn} &=& \gamma_m \\
 \end{array}
\]
hold.
We say $\PC$ has \textbf{decidable linear systems}
if we have an algorithm that constructs for a given linear system
a solution or disproves its existence.
\end{definition}

\begin{definition}
 Let $\PC$ be an additive category.
 The set of \textbf{iterated Freyd categories of $\PC$}
 is defined inductively:
 \begin{enumerate}
  \item $\PC$ is an iterated Freyd category of $\PC$.
  \item If $\XC$ is an iterated Freyd category of $\PC$, then so are $\XC^{\op}$ and $\Freyd( \XC )$.
 \end{enumerate}
\end{definition}

Important examples of iterated Freyd categories are $\Freyd( \Freyd( \PC ) )$ and $\Freyd( \Freyd( \PC )^{\op} )$
(see Example \ref{example:fpfunctors}). The next theorem
is a generalization of the discussion in the beginning of this section.

\begin{theorem}\label{theorem:linear_system_in_Freyd}
 Let $\PC$ be an additive category.
 Any linear system in an iterated Freyd category $\XC$ of $\PC$
 gives rise to a linear system in $\PC$ 
 such that the former has a solution if and only if the latter has a solution. 
\end{theorem}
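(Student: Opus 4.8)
The plan is to prove Theorem \ref{theorem:linear_system_in_Freyd} by structural induction over the inductive definition of iterated Freyd categories. The base case $\XC = \PC$ is trivial, since a linear system in $\PC$ is already a linear system in $\PC$. For the inductive step there are two cases to treat: the passage from $\XC$ to $\XC^{\op}$, and the passage from $\XC$ to $\Freyd(\XC)$. Since the property of ``having a solution'' is what we need to track, and since taking opposites merely reverses the direction of all morphisms while leaving the equations $\sum_j \alpha_{ij} \cdot X_j \cdot \beta_{ij} = \gamma_i$ formally intact (reading them in $\XC^{\op}$), the $\XC^{\op}$ case is essentially a bookkeeping exercise: a linear system in $\XC^{\op}$ is literally the data of a linear system in $\XC$ with the roles of the $\alpha$'s and $\beta$'s swapped and all arrows reversed, and a solution in one is a solution in the other. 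So the real content is the reduction from a linear system in $\Freyd(\XC)$ to a linear system in $\XC$.

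For that step I would unwind the definition of morphisms in $\Freyd(\XC)$. An indeterminate $X_j$ is a morphism in $\Freyd(\XC)$ from $(B_j \stackrel{\rho_{B_j}}{\longleftarrow} R_{B_j})$ to $(C_j \stackrel{\rho_{C_j}}{\longleftarrow} R_{C_j})$, hence consists of a morphism datum $X_j$ in $\XC$ together with a morphism witness $Y_j: R_{B_j} \to R_{C_j}$ satisfying the well-definedness relation $\rho_{B_j} \cdot X_j = Y_j \cdot \rho_{C_j}$, exactly as in equation \eqref{equation:welldef} of the motivating discussion. The equation $\sum_j \alpha_{ij} \cdot X_j \cdot \beta_{ij} = \gamma_i$ in $\Freyd(\XC)$ does not mean equality of morphism data on the nose; it means equality \emph{in} $\Freyd(\XC)$, i.e., the existence of a witness $Z_i: A_i \to R_{D_i}$ (the relations object of the range of $\gamma_i$) with $\bigl(\sum_j \alpha_{ij} \cdot X_j \cdot \beta_{ij}\bigr) - \gamma_i = Z_i \cdot \rho_{D_i}$, which is precisely the shape of equation \eqref{equation:lift}. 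So the solvability of the original $m \times n$ system in $\Freyd(\XC)$ is equivalent to the solvability, in $\XC$, of the enlarged system whose indeterminates are the triples $(X_j, Y_j, Z_i)$ and whose equations are: the $n$ well-definedness equations $\rho_{B_j} \cdot X_j - Y_j \cdot \rho_{C_j} = 0$, one for each $j$; and the $m$ ``lift'' equations $\sum_j \alpha_{ij} \cdot X_j \cdot \beta_{ij} - Z_i \cdot \rho_{D_i} = \gamma_i$, one for each $i$. Each of these is manifestly of the form demanded by Definition \ref{definition:linear_system} once one absorbs the various $\rho$'s and the identity morphisms into the coefficient morphisms $\alpha_{\bullet\bullet}$, $\beta_{\bullet\bullet}$ (for instance, the term $Y_j \cdot \rho_{C_j}$ is $\id_{R_{B_j}} \cdot Y_j \cdot \rho_{C_j}$, and $Z_i \cdot \rho_{D_i}$ is $\id_{A_i} \cdot Z_i \cdot \rho_{D_i}$). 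Composing this reduction with the inductive hypothesis applied to $\XC$ then yields the desired linear system in $\PC$.

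The main obstacle, such as it is, is not conceptual but organizational: one must be careful that the notion of ``equality of morphisms in $\Freyd(\XC)$'' is used in the definition of a solution, so that a solution to the $\Freyd(\XC)$-system genuinely requires producing the extra witnesses $Z_i$ as part of the solved data — otherwise the equivalence of solvability fails in one direction. Concretely, the forward direction (a $\Freyd(\XC)$-solution gives an $\XC$-solution) needs us to extract morphism witnesses $Y_j$ from the given morphisms $X_j$ and equality-witnesses $Z_i$ from the given equations; the backward direction (an $\XC$-solution gives a $\Freyd(\XC)$-solution) needs us to check that the $X_j$ produced are genuine morphisms in $\Freyd(\XC)$ — which is exactly what the well-definedness equations guarantee — and that they satisfy the equations in $\Freyd(\XC)$ — which is exactly what the lift equations, read as providing the $Z_i$, guarantee. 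Once this is laid out, one also has to handle the trivial point that the enlarged system may have more than $m$ equations and more than $n$ indeterminates, but Definition \ref{definition:linear_system} allows arbitrary finite $m$ and $n$, so this is no issue. Finally, I would note that the reduction is explicit and algorithmic, so that ``decidable linear systems'' is inherited in the evident way, although the statement of the theorem only asserts the solvability equivalence.
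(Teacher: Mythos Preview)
Your proposal is correct and follows essentially the same approach as the paper: structural induction on the formation of iterated Freyd categories, with the substantive step being the $\Freyd(\XC)$ case handled by introducing, for each indeterminate $X_j$, an auxiliary witness variable $Y_j$ (the paper writes $X_j^2$) subject to the well-definedness equation $\rho_{B_j} \cdot X_j = Y_j \cdot \rho_{C_j}$, and for each original equation an equality-witness variable $Z_i$ subject to $\sum_j \alpha_{ij} \cdot X_j \cdot \beta_{ij} - Z_i \cdot \rho_{D_i} = \gamma_i$. Your discussion of the two directions of the solvability equivalence is in fact more explicit than the paper's, which simply records the translated system.
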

\begin{proof}[Proof by induction]
 The case $\XC = \PC$ is trivial.
 Furthermore, any linear system in $\XC^{\op}$ trivially gives rise to an equivalent linear system in $\XC$.
 So,
 let 
\[
 \begin{array}{ccc} 
 \sum_{j=1}^n \{\alpha_{1j}, \rho_{\alpha_{1j}} \} \cdot X_j \cdot \{ \beta_{1j}, \rho_{\beta_{1j}} \}  &=& \{\gamma_1, \rho_{\gamma_1} \} \\
 \vdots & &\vdots \\
 \sum_{i=j}^n \{\alpha_{mj}, \rho_{\alpha_{mj}} \} \cdot X_j \cdot \{\beta_{mj}, \rho_{\beta_{mj}}\}  &=& \{\gamma_m, \rho_{\gamma_m} \}\\
 \end{array}
\]
be a linear system in $\Freyd( \XC )$,
where the corresponding sources and ranges are denoted by
\begin{itemize}
 \item $\{\alpha_{ij}, \rho_{\alpha_{ij}}\}: (A_i \stackrel{\rho_{A_i}}{\longleftarrow} R_{A_i}) \longrightarrow (B_j \stackrel{\rho_{B_j}}{\longleftarrow} R_{B_j})$,
 \item $\{\beta_{ij}, \rho_{\beta_{ij}}\}: (C_j \stackrel{\rho_{C_j}}{\longleftarrow} R_{C_j}) \longrightarrow (D_i \stackrel{\rho_{D_i}}{\longleftarrow} R_{D_i})$,
 \item $\{\gamma_{i}, \rho_{\gamma_{i}}\}: (A_i \stackrel{\rho_{A_i}}{\longleftarrow} R_{A_i}) \longrightarrow (D_i \stackrel{\rho_{D_i}}{\longleftarrow} R_{D_i})$.
\end{itemize}
To define an equivalent linear system in $\XC$,
we introduce variables $(X_j^1)_j$, $(X_j^2)_j$,
and the set of linear equations
\[
 X_j^2 \cdot \rho_{C_j} = \rho_{B_j} \cdot X_j^1
\]
encoding well-definedness of $\{X_j^1, X_j^2\}: (B_j \stackrel{\rho_{B_j}}{\longleftarrow} R_{B_j}) \longrightarrow (C_j \stackrel{\rho_{C_j}}{\longleftarrow} R_{C_j})$.
Furthermore, for each original equation, we need a ``witness-variable'' $(Z_i: A_i \rightarrow R_{D_i})_i$.
Now, we can simply encode the original equations
by the following linear equations in $\XC$:
\[
 \begin{array}{ccc} 
 \sum_{j=1}^n \alpha_{1j} \cdot X_j^1 \cdot \beta_{1j}  &=& \gamma_1 + Z_1 \cdot \rho_{D_1} \\
 \vdots & &\vdots \\
 \sum_{i=j}^n \alpha_{mj} \cdot X_j^1 \cdot \beta_{mj}  &=& \gamma_m + Z_m \cdot \rho_{D_m}. \\
 \end{array}
\]
\end{proof}

\textbf{Key observation:}
finding a solution of a linear system in $\PC$
is equivalent to finding a lift of the following
diagram of abelian groups:
\begin{center}
        \begin{tikzpicture}[label/.style={postaction={
          decorate,
          decoration={markings, mark=at position .5 with \node #1;}},
          mylabel/.style={thick, draw=none, align=center, minimum width=0.5cm, minimum height=0.5cm,fill=white}}]
          \coordinate (r) at (7.5,0);
          \coordinate (u) at (0,2);
          \node (Z) {$\Z$};
          \node (AB) at ($(Z)-(u)$) {$\bigoplus_{i}\Hom_{\PC}( A_i, D_i )$};
          \node (AC) at ($(AB) + (r)$) {$\bigoplus_{j}\Hom_{\PC}( B_j, C_j )$.};
          
          \draw[->,thick] (Z) to node[left]{$(1 \mapsto (\gamma_i)_i)$} (AB);
          \draw[->,thick] (AC) to node[below]{$\big(\Hom_{\PC}(\alpha_{ij}, \beta_{ij})\big)_{ij}$}(AB);
          \draw[->,thick,dashed] (Z) to (AC);
    \end{tikzpicture}
  \end{center}
The data needed to form this diagram
are
\begin{enumerate}
 \item the abelian group $\Z$,
 \item the $\Hom$-functor of $\PC$ mapping to the category of abelian groups,
 \item the translation of elements in $\Hom_{\PC}(A,B)$ to morphisms $\Z \rightarrow \Hom_{\PC}(A,B)$.
\end{enumerate}
We abstract these data
in the following definition\footnote{The author is not aware of such a definition appearing in the literature.}.

\begin{definition}\label{definition:homomorphism_strucutre}
 Let $\PC$, $\BC$ be additive categories.
 A \textbf{$\BC$-homomorphism structure} for $\PC$ consists of the following data:
 \begin{enumerate}
  \item A distinguished object $1 \in \BC$.
  \item A bilinear functor $H: \PC^{\op} \times \PC \rightarrow \BC$, i.e., a functor which is additive in each component.
  \item An isomorphism $\nu: \Hom_{\PC}( P, Q ) \xrightarrow{\sim} \Hom_{\BC}( 1, H( P, Q ) )$ natural in $P,Q \in \PC$,
        i.e, $\nu( \alpha \cdot X \cdot \beta ) = \nu( X ) \cdot H(\alpha, \beta)$ for all composable triples of
        morphisms $\alpha, X, \beta$.
 \end{enumerate}
\end{definition}

\begin{example}\label{example:hom_struct_comm}
We use the notation of Example \ref{example:fpmod}.
Let $R$ be a \emph{commutative} ring.
The functor
\begin{align*}
 H: \Rows_R^{\op} \times \Rows_R &\longrightarrow \Rows_{R}:  \\ 
 (R^{1 \times a} \stackrel{A}{\longleftarrow} R^{1 \times a'}, R^{1 \times b} \stackrel{B}{\longrightarrow} R^{1 \times b'})
 &\longmapsto
 \big(A_{ij}B_{kl}\big)_{ijkl}: \bigoplus_{j=1}^{a}\bigoplus_{k=1}^{b}{R^{1 \times 1}} \longrightarrow \bigoplus_{i=1}^{a'}\bigoplus_{l=1}^{b'}{R^{1 \times 1}}
\end{align*}
defines a $\Rows_R$-homomorphism structure for $\Rows_R$ with distinguished object $R^{1 \times 1}$
and the natural isomorphism of $R$-modules
\[\Hom_{\Rows_R}( R^{1 \times a}, R^{1 \times b} ) \simeq_R \Hom_{\Rows_R}( R^{1 \times 1}, \bigoplus_{j=1}^{a}\bigoplus_{k=1}^{b}{R^{1 \times 1}} ).\]
Note that $H$ can be interpreted as the restriction of the $\Hom$-functor
$R\Modl^{\op} \times R\Modl \rightarrow R\Modl$ to row modules.
\end{example}

The previous example only worked due to the commutativity of the ring
in question.
In the next example, we show what we can do in the non-commutative
case provided that the the center of the ring is ``big enough''.

\begin{example}\label{example:hom_struct_noncomm}
We use the notation of Example \ref{example:fpmod}.
Let $R$ be a ring and $C \subseteq R$ its center.
Assume that $R$ is finitely presented as a $C$-module,
i.e., there exists
an exact sequence of $C$-modules
\begin{center}
    \begin{tikzpicture}[label/.style={postaction={
    decorate,
    decoration={markings, mark=at position .5 with \node #1;}},
    mylabel/.style={thick, draw=none, align=center, minimum width=0.5cm, minimum height=0.5cm,fill=white}}]
    \coordinate (r) at (2.5,0);
    \coordinate (u) at (0,2);
    \node (D) {$0$};
    \node (C) at ($(D) + 0.75*(r)$) {${_{C}R}$};
    \node (B) at ($(C)+(r)$) {$C^{1\times a}$};
    \node (A) at ($(B)+(r)$) {$C^{1 \times b}$};
    
    \draw[->,thick] (A) -- (B);
    \draw[->,thick] (B) -- (C);
    \draw[->,thick] (C) -- (D);
    \end{tikzpicture}
\end{center}
for $a,b \in \N$, where ${_{C}R}$ denotes $R$ regarded as a $C$-module.
In this case the $\Hom$-functor for $\Rows_R$ can be seen as a functor mapping to $C\fpmodl$:
\begin{align*}
 \Rows_R^{\op} \times \Rows_R &\longrightarrow C\fpmodl:  \\ 
 (R^{1 \times c}, R^{1 \times d})
 &\longmapsto \Hom_R( R^{1 \times c}, R^{1 \times d}) \simeq {_{C}R}^{1 \times cd}
\end{align*}
This gives rise to a $C\fpmodl$-homomorphism structure
for $\Rows_R$ with distinguished object $C^{1 \times 1} \in C\fpmodl$
and natural isomorphism
\[\Hom_{\Rows_R}( R^{1 \times c}, R^{1 \times d}) \simeq_C \Hom_{C\fpmodl}\left( C^{1\times 1}, {_{C}R}^{1 \times cd} \right)\]
induced by the natural bijection between elements in $R$ and $C$-module homomorphisms $C \rightarrow {_{C}R}$.
This homomorphism structure transfers to 
a $\Freyd( \Rows_R )$-homomorphism structure
via the equivalence $R\fpmodl \simeq \Freyd( \Rows_R )$ (see Example \ref{example:fpmod}).

\end{example}

The next examples are more abstract.

\begin{example}
 If $\PC$ is an additive closed symmetric monoidal category,
 then its tensor unit $1 \in \PC$ and the internal $\Hom$-functor
 define a $\PC$-homomorphism structure for $\PC$.
\end{example}

\begin{example}\label{example:op}
 If $(1,H: \PC^{\op} \times \PC \rightarrow \B,\nu)$
 is a $\BC$-homomorphism structure for $\PC$, then
 swapping components $G: \PC \times \PC^{\op} \xrightarrow{\sim} \PC^{\op} \times \PC \stackrel{H}{\longrightarrow} \B$
 defines a $\BC$-homomorphism structure for $\PC^{\op}$
 with distinguished object $1$.
\end{example}

The following theorem is an abstraction
of a computational trick presented in
\cite{zl} (see also \cite{BR}).

\begin{theorem}\label{theorem:linear_system_in_P}
We use the notation of Definition \ref{definition:linear_system}.
Let $\PC$ be an additive category equipped with a $\BC$-homomorphism structure $(1, H, \nu)$.
Given a linear system in $\PC$,
then $(X_j)_j$ is a solution if and only if
it gives rise to a lift of the following diagram in $\BC$:
 \begin{center}
        \begin{tikzpicture}[label/.style={postaction={
          decorate,
          decoration={markings, mark=at position .5 with \node #1;}},
          mylabel/.style={thick, draw=none, align=center, minimum width=0.5cm, minimum height=0.5cm,fill=white}}]
          \coordinate (r) at (7.5,0);
          \coordinate (u) at (0,2);
          \node (Z) {$1$};
          \node (AB) at ($(Z)-(u)$) {$\bigoplus_{i}H( A_i, D_i )$};
          \node (AC) at ($(AB) + (r)$) {$\bigoplus_{j}H( B_j, C_j )$.};
          
          \draw[->,thick] (Z) to node[left]{$(\nu(\gamma_i))_i$} (AB);
          \draw[->,thick] (AC) to node[below]{$\big(H(\alpha_{ij}, \beta_{ij})\big)_{ij}$}(AB);
          \draw[->,thick,dashed] (Z) to node[above,xshift=1em]{$(\nu(X_j))_j$} (AC);
    \end{tikzpicture}
  \end{center}
\end{theorem}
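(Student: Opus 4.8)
The plan is to translate the linear system equation by equation from $\PC$ into $\BC$ by applying the natural isomorphism $\nu$, and then to recognise that the resulting system in $\BC$ is exactly the lifting problem displayed in the statement. Since each component $\nu = \nu_{P,Q}$ is an isomorphism of abelian groups — in particular additive — equality of morphisms transports faithfully between $\PC$ and $\BC$, and it is this faithfulness that will make the equivalence in the theorem an honest ``if and only if''.

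In detail, I would fix $i \in \{1,\dots,m\}$ and start from the $i$-th equation of the system (Definition \ref{definition:linear_system}),
\[
 \sum_{j=1}^n \alpha_{ij} \cdot X_j \cdot \beta_{ij} = \gamma_i
\]
in $\Hom_{\PC}(A_i, D_i)$. Applying $\nu_{A_i, D_i}$, using additivity to pull the sum outside, and then the naturality axiom $\nu(\alpha \cdot X \cdot \beta) = \nu(X) \cdot H(\alpha,\beta)$ on each summand (Definition \ref{definition:homomorphism_strucutre}), the left-hand side becomes $\sum_{j=1}^n \nu(X_j) \cdot H(\alpha_{ij}, \beta_{ij})$ and the right-hand side becomes $\nu(\gamma_i)$. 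Here one checks that the two variances of $H$ match up: from $\alpha_{ij}: A_i \to B_j$ and $\beta_{ij}: C_j \to D_i$ one gets $H(\alpha_{ij}, \beta_{ij}): H(B_j, C_j) \to H(A_i, D_i)$, so each composite $\nu(X_j) \cdot H(\alpha_{ij}, \beta_{ij}): 1 \to H(A_i, D_i)$ is well-defined.

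It then remains to bundle these $m$ equations into a single equality of morphisms into $\bigoplus_i H(A_i, D_i)$. Under the row convention for matrices of morphisms, the morphism $\big(H(\alpha_{ij},\beta_{ij})\big)_{ij}\colon \bigoplus_j H(B_j,C_j) \to \bigoplus_i H(A_i,D_i)$ precomposed with the morphism $1 \to \bigoplus_j H(B_j,C_j)$ having components $\nu(X_j)$ has $i$-th component exactly $\sum_j \nu(X_j) \cdot H(\alpha_{ij},\beta_{ij})$, whereas the morphism $1 \to \bigoplus_i H(A_i,D_i)$ with components $\nu(\gamma_i)$ is $(\nu(\gamma_i))_i$. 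Hence the displayed triangle commutes for $(\nu(X_j))_j$ precisely when all $m$ translated equations hold, which by injectivity of each $\nu$ happens precisely when $(X_j)_j$ solves the original system. Finally, surjectivity of each $\nu$ shows that every morphism $1 \to \bigoplus_j H(B_j,C_j)$ is of the form $(\nu(X_j))_j$ for some family $(X_j)_j$, so lifts of the diagram and solutions of the linear system correspond bijectively via $\nu$, which is the full content of the theorem.

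I do not expect a genuine obstacle here: the proof is a direct unwinding of the axioms of a homomorphism structure together with the bookkeeping of the row convention for direct sums. The only points demanding a little care are keeping the two variances of $H$ straight and checking that composition in $\BC$ reproduces exactly the triple products $\alpha_{ij} \cdot X_j \cdot \beta_{ij}$ of Definition \ref{definition:linear_system}; and, implicitly, using that $\nu$ is additive — which is why it is required to be an isomorphism of the $\Hom$-groups rather than a bare bijection.
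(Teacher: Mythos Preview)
Your proof is correct and follows essentially the same approach as the paper: both arguments compute the composite $(\nu(X_j))_j \cdot \big(H(\alpha_{ij},\beta_{ij})\big)_{ij}$ componentwise using matrix multiplication, naturality of $\nu$, and additivity of $\nu$, and then invoke that $\nu$ is an isomorphism. Your additional remark about surjectivity of $\nu$ giving a bijection between lifts and solutions is a nice observation but slightly exceeds what the theorem literally claims, since the statement only asks whether the particular morphism $(\nu(X_j))_j$ makes the triangle commute.
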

\begin{proof}
 We compute
 \begin{align*}
  ( v(X_j) )_j \cdot ( H( \alpha_{ij}, \beta_{ij} ) )_{ij} 
  &= ( \sum_j \nu(X_j) \cdot H( \alpha_{ij}, \beta_{ij} ) )_i & \text{matrix multiplication} \\
  &= ( \sum_j \nu( \alpha_{ij} \cdot X_j \cdot \beta_{ij} ) )_i & \text{naturality of $\nu$}\\
  &= (  \nu( \sum_j \alpha_{ij} \cdot X_j \cdot \beta_{ij} ) )_i & \text{linearity of $\nu$}
 \end{align*}
Since $\nu$ is an isomorphism, the last term equals $(\nu(\gamma_i))_i$
if and only if $(X_j)_j$ is a solution.
\end{proof}

\begin{corollary}\label{corollary:computable_lifts_in_Freyd}
Let $\PC$ be an additive category equipped with a $\BC$-homomorphism structure $(1, H, \nu)$.
If $\BC$ has decidable lifts, then
$\PC$ has decidable linear systems.
In particular, any iterated Freyd category of $\PC$
has decidable linear systems.
\end{corollary}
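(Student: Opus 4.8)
The plan is to read the two preceding theorems as algorithms and chain them.

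For the first claim, suppose we are handed a linear system in $\PC$ in the sense of Definition~\ref{definition:linear_system}. First I would assemble the diagram in $\BC$ that appears in Theorem~\ref{theorem:linear_system_in_P}: its data are the distinguished object $1$, the morphism $(\nu(\gamma_i))_i \colon 1 \to \bigoplus_i H(A_i,D_i)$, and the morphism $(H(\alpha_{ij},\beta_{ij}))_{ij} \colon \bigoplus_j H(B_j,C_j) \to \bigoplus_i H(A_i,D_i)$, all of which are computable from the given morphisms by applying the functor $H$ and the isomorphism $\nu$. Next I would run the lifting algorithm provided by the hypothesis that $\BC$ has decidable lifts on this cospan. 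If it returns a lift $\ell \colon 1 \to \bigoplus_j H(B_j,C_j)$, I compose $\ell$ with the direct-sum projections to get components $\ell_j \colon 1 \to H(B_j,C_j)$ and set $X_j := \nu^{-1}(\ell_j)$; since $(\nu(X_j))_j = \ell$ is a lift, Theorem~\ref{theorem:linear_system_in_P} guarantees that $(X_j)_j$ is a solution. If instead the algorithm disproves the existence of a lift, then no solution of the linear system can exist, because any solution $(X_j)_j$ would, again by Theorem~\ref{theorem:linear_system_in_P}, produce the lift $(\nu(X_j))_j$. Hence $\PC$ has decidable linear systems.

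For the ``in particular'' part, let $\XC$ be an iterated Freyd category of $\PC$ and let a linear system in $\XC$ be given. By Theorem~\ref{theorem:linear_system_in_Freyd} it gives rise to a linear system in $\PC$ that is solvable exactly when the original one is. Inspecting the inductive proof of that theorem, the passage is algorithmic in both directions: at each step one merely splits the indeterminates $X_j$ of a system over $\Freyd(\XC)$ into pairs $\{X_j^1,X_j^2\}$ together with the witness variables $Z_i$, and conversely reads these pieces back off, while the $\op$ step is purely cosmetic. So I would apply the first part to the resulting system over $\PC$; a solution is then transported back up the finite chain of reductions to a solution of the original system in $\XC$, and a report of unsolvability propagates back unchanged.

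The only point requiring care is the constructivity of Theorem~\ref{theorem:linear_system_in_Freyd}, which as stated only asserts equi-solvability; one must verify that its proof actually exhibits a computable correspondence between the two solution sets, which it does, since every step is an explicit manipulation of tuples of morphisms (invoking nothing beyond the harmless axiom of unique choice). Beyond this bookkeeping there is no genuine obstacle: the corollary is a formal consequence of Theorems~\ref{theorem:linear_system_in_P} and~\ref{theorem:linear_system_in_Freyd}, and in particular it never needs to transport the $\BC$-homomorphism structure along any equivalence — one always works with the fixed homomorphism structure on $\PC$ itself.
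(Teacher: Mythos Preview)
Your proposal is correct and follows exactly the approach the paper takes: the paper's proof consists of a single sentence invoking Theorem~\ref{theorem:linear_system_in_Freyd} and Theorem~\ref{theorem:linear_system_in_P}, and you have simply unpacked how those two results compose into an algorithm. Your extra paragraph checking that the reduction in Theorem~\ref{theorem:linear_system_in_Freyd} is algorithmically effective in both directions is a worthwhile elaboration in this constructive setting, but it does not deviate from the paper's route.
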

\begin{proof}
 We use Theorem \ref{theorem:linear_system_in_Freyd} and Theorem \ref{theorem:linear_system_in_P}.
\end{proof}

\begin{example}\label{example:fpmod_lifts}
Let $R$ be a \emph{commutative} ring.
Then
$\Rows_R$ has a $\Rows_R$-homomorphism structure (Example \ref{example:hom_struct_comm}).
If $R$ has decidable lifts,
then
$\Freyd( \Rows_R )$ 
and all the other iterated Freyd categories of $\Rows_R$
have decidable linear systems (in particular lifts and colifts) (Corollary \ref{corollary:computable_lifts_in_Freyd}).
\end{example}

Using the previous example, we can deal with more general rings:

\begin{example}\label{example:fpmod_noncom_lifts}
Let $R$ be a ring
that is finitely presented as a module over its center.
Then $\Rows_R$ has an $\Freyd( \Rows_C )$-homomorphism structure (Example \ref{example:hom_struct_noncomm}).
If $C$ has decidable lifts,
then so does $\Freyd( \Rows_C )$ (Example \ref{example:fpmod_lifts}).
It follows that $\Freyd( \Rows_R )$ 
and all the other iterated Freyd categories of $\Rows_R$
have decidable linear systems (in particular lifts and colifts) (Corollary \ref{corollary:computable_lifts_in_Freyd}).
\end{example}

\subsection{Homomorphism structures in Freyd categories}

We show that certain $\BC$-homo\-morphism structures for $\PC$
induce $\BC$-homomorphism structures for $\mathcal{A}(\PC)$.
We will use such an induced homomorphism structure
in Subsection \ref{subsection:nat} to compute the sets of natural
transformations between finitely presented functors.

\begin{construction}\label{construction:hom_structure_freyd}
 Let $\PC$ be an additive category, $\BC$ an abelian category,
 and $H: \PC^{\op} \times \PC \rightarrow \BC$ a bilinear functor.
 We want to construct from these data a bilinear functor
 \[
  H^{\Freyd}: \Freyd(\PC)^{\op} \times \Freyd( \PC ) \longrightarrow \BC
 \]
 extending $H$ (where we think of $\PC$ as a full subcategory of $\Freyd( \PC )$).
 Given morphisms 
 \[\{\alpha, \rho_{\alpha}\}: (A' \stackrel{\rho_{A'}}{\longleftarrow} R_{A'}) \longrightarrow (A \stackrel{\rho_A}{\longleftarrow} R_A)\]
 and
 \[\{\beta, \rho_{\beta}\}: (B \stackrel{\rho_B}{\longleftarrow} R_B) \longrightarrow (B' \stackrel{\rho_{B'}}{\longleftarrow} R_{B'})\]
 in $\mathcal{A}(\PC)$,
 we set $H^{\Freyd}( \{\alpha, \rho_{\alpha}\}, \{\beta, \rho_{\beta}\} )$ as the morphism
 between the kernels in the diagram
 \begin{center}
    \begin{tikzpicture}[label/.style={postaction={
      decorate,
      decoration={markings, mark=at position .5 with \node #1;}},
      mylabel/.style={thick, draw=black, align=center, minimum width=0.5cm, minimum height=0.5cm,fill=white}}]
      \coordinate (r) at (5,0);
      \coordinate (u) at (0,2.5);
      \node (K1) {$\kernel( \overline{H(\rho_A,B)} )$};
      \node (K2) at ($(K1) - (u)$) {$\kernel( \overline{H(\rho_{A'},B')} )$};
      \node (A) at ($(K1) + (r)$) {$\frac{H(A,B)}{\image(H(A,\rho_B))}$};
      \node (A2) at ($(A)+(r)$) {$\frac{H(R_A,B)}{\image(H(R_A,\rho_B))}$};
      \node (B) at ($(K2) + (r)$) {$\frac{H(A',B')}{\image(H(A',\rho_{B'}))}$};
      \node (B2) at ($(B)+(r)$) {$\frac{H(R_{A'},B')}{\image(H(R_{A'},\rho_{B'}))}$};
      \draw[->,thick] (A) --node[above]{$\overline{H(\rho_A,B)}$} (A2);
      \draw[->,thick] (B) --node[above]{$\overline{H(\rho_{A'},B')}$} (B2);
      \draw[->,thick] (A) --node[left]{$\overline{H(\alpha, \beta)}$} (B);
      \draw[->,thick] (A2) --node[right]{$\overline{H(\rho_\alpha, \beta)}$} (B2);
      \draw[right hook->, thick] (K1) -- (A);
      \draw[right hook->, thick] (K2) -- (B);
      \draw[->, dashed, thick] (K1) --node[left]{$H^{\Freyd}( \{\alpha, \rho_{\alpha}\}, \{\beta, \rho_{\beta}\} )$} (K2);
    \end{tikzpicture}
  \end{center}
  where we overline a morphism to address its induced morphism on quotient objects.
\end{construction}
\begin{proof}[Correctness of the construction]
 To see that all $4$ overlined morphisms in the right square are well-defined,
 let us take a look at $H(\rho_A,B)$.
 It induces a well-defined morphism on the quotient objects in question
 if $\image( H(A,\rho_B) \cdot H(\rho_A,B) ) \subseteq \image( H(R_A, \rho_B) )$.
 But this is true since the interchange law for $H$, i.e., its functoriality as a bifunctor, implies
 \begin{align*}
  H(A,\rho_B) \cdot H(\rho_A,B) = H( \rho_A, R_B ) \cdot H(R_A, \rho_B).
 \end{align*}
 We can deal similarly with the remaining $3$ overlined morphisms. 
 
 Clearly, the right square commutes.
 Furthermore, since $H^{\Freyd}( \{\alpha, \rho_{\alpha}\}, \{\beta, \rho_{\beta}\} )$
 is constructed as a morphism between kernels from the right square,
 it is independent of the morphism $\overline{H(\rho_\alpha, \beta)}$
 and thus of the chosen morphism witnesses $\rho_{\alpha}, \rho_{\beta}$.
 
 Now, let $\sigma_A: A' \rightarrow R_A$ and $\sigma_B: B \rightarrow R_{B'}$
 be arbitrary morphisms.
 Then
 $\{\alpha, \rho_{\alpha}\} = \{\alpha + \sigma_A \cdot \rho_A, \rho_{\alpha} + \rho_{A'} \cdot \sigma_A \}$ and
 $\{\beta, \rho_{\beta}\} = \{\beta + \sigma_B \cdot \rho_{B'}, \rho_{\beta} + \rho_{B} \cdot \sigma_B \}$
 and we have to prove that our construction is independent of this choice of representatives.
 To this end, we compute
 \begin{align*}
  &\overline{H(\alpha + \sigma_A \cdot \rho_A, \beta + \sigma_B \cdot \rho_{B'})} \\
  &= \overline{H(\alpha, \beta)}
  + \overline{H(\sigma_A \cdot \rho_A, \beta )}
  + \underbrace{\overline{H(\alpha ,\sigma_B \cdot \rho_{B'})}}_{= 0}
  + \underbrace{\overline{H(\sigma_A \cdot \rho_A, \sigma_B \cdot \rho_{B'})}}_{= 0}
 \end{align*}
 in $\frac{H(A',B')}{\image(H(A',\rho_{B'}))}$, where the last $2$ summands are $0$ because the images of
 $H(\alpha ,\sigma_B \cdot \rho_{B'})$ and $H(\sigma_A \cdot \rho_A, \sigma_B \cdot \rho_{B'})$
 lie in $\image(H(A',\rho_{B'}))$.
 And since $\overline{H(\sigma_A \cdot \rho_A, \beta )} = \overline{H(\rho_A , B )} \cdot \overline{H( \sigma_A, \beta )}$,
 i.e., it factors over $\overline{H(\rho_A , B )}$,
 this summand does not contribute to the morphism between the kernels.
 It follows that our construction is well-defined, and its
 functoriality and bilinearity are easy to see.
\end{proof}

\begin{theorem}\label{theorem:induced_hom_structure}
 Let $\PC$ be an additive category equipped with a $\BC$-homomorphism structure $(1, H, \nu)$.
 If $\BC$ is abelian and if $1 \in \BC$ is a projective object,
 then the functor $H^{\Freyd}$ of 
 Construction \ref{construction:hom_structure_freyd}
 fits into a $\BC$-homomorphism structure
 $(1, H^{\Freyd}, \nu^{\Freyd})$ for $\Freyd(\PC)$.
\end{theorem}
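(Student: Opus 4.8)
The functor $H^{\Freyd}$ is already given by Construction \ref{construction:hom_structure_freyd}, so the task is to produce the natural isomorphism $\nu^{\Freyd}\colon \Hom_{\Freyd(\PC)}(X,Y) \xrightarrow{\sim} \Hom_{\BC}(1, H^{\Freyd}(X,Y))$. The plan is to transport $\nu$ through the passage to quotients and kernels used to define $H^{\Freyd}$; the point where the hypotheses enter is that, since $\BC$ is abelian and $1 \in \BC$ is projective, the functor $\Hom_{\BC}(1,-)$ is exact, hence commutes with the kernel and with the quotients appearing in Construction \ref{construction:hom_structure_freyd}.

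\textbf{Applying $\Hom_{\BC}(1,-)$.} Fix objects $(A \stackrel{\rho_A}{\longleftarrow} R_A)$ and $(B \stackrel{\rho_B}{\longleftarrow} R_B)$ and apply $\Hom_{\BC}(1,-)$ to the defining diagram of $H^{\Freyd}$. By exactness we get
\[
 \Hom_{\BC}\big(1, H^{\Freyd}((A \stackrel{\rho_A}{\longleftarrow} R_A),(B \stackrel{\rho_B}{\longleftarrow} R_B))\big) = \kernel\big(\Hom_{\BC}(1, \overline{H(\rho_A,B)})\big),
\]
and $\Hom_{\BC}(1,-)$ turns $\frac{H(A,B)}{\image(H(A,\rho_B))}$ into $\frac{\Hom_{\BC}(1,H(A,B))}{\image \Hom_{\BC}(1,H(A,\rho_B))}$ (and similarly for the target). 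Next I would use $\nu$ to rewrite $\Hom_{\BC}(1,H(A,B)) \cong \Hom_{\PC}(A,B)$; by naturality of $\nu$ the subgroup $\image \Hom_{\BC}(1,H(A,\rho_B))$ corresponds to $\{\lambda \cdot \rho_B \mid \lambda \in \Hom_{\PC}(A,R_B)\}$, which is exactly the subgroup of $\Hom_{\PC}(A,B)$ by which one quotients to obtain equality in $\Freyd(\PC)$. Thus the codomain of $\nu^{\Freyd}$ is identified with a subgroup of $\Hom_{\PC}(A,B)/\{\lambda \cdot \rho_B\}$, and I would define $\nu^{\Freyd}(\{\alpha,\rho_{\alpha}\})$ to be the element of $\kernel(\Hom_{\BC}(1,\overline{H(\rho_A,B)}))$ corresponding to the class of $\nu(\alpha)$.

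\textbf{The key step: identifying the subgroup.} A class $[\alpha]$ lies in $\kernel(\Hom_{\BC}(1,\overline{H(\rho_A,B)}))$ iff $\nu(\alpha) \cdot H(\rho_A,B) = \nu(\rho_A \cdot \alpha)$ dies in $\frac{H(R_A,B)}{\image(H(R_A,\rho_B))}$, i.e.\ iff $\nu(\rho_A \cdot \alpha)$ factors through the monomorphism $\image(H(R_A,\rho_B)) \hookrightarrow H(R_A,B)$. Using projectivity of $1$ a second time, such a factorization lifts along the epimorphism $H(R_A,R_B) \twoheadrightarrow \image(H(R_A,\rho_B))$ to a morphism $\psi\colon 1 \to H(R_A,R_B)$ with $\psi \cdot H(R_A,\rho_B) = \nu(\rho_A \cdot \alpha)$; writing $\psi = \nu(\rho_{\alpha})$ and applying $\nu^{-1}$ this says exactly $\rho_{\alpha} \cdot \rho_B = \rho_A \cdot \alpha$, i.e.\ that $\alpha$ admits a morphism witness. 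The converse is immediate, and since ``admits a morphism witness'' is stable under adding elements of $\{\lambda \cdot \rho_B\}$ it is a property of the class $[\alpha]$; hence the subgroup in question is precisely $\Hom_{\Freyd(\PC)}((A \stackrel{\rho_A}{\longleftarrow} R_A),(B \stackrel{\rho_B}{\longleftarrow} R_B))$. Therefore $\nu^{\Freyd}$ is a bijective group homomorphism, every arrow in the chain being one.

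\textbf{Naturality and the obstacle.} Naturality of $\nu^{\Freyd}$ in both variables follows by unwinding definitions: $\nu^{\Freyd}$ is obtained from $\nu$ by applying the functorial operations ``pass to $\frac{H(-,-)}{\image(H(-,\rho_{(-)}))}$'' and ``take $\kernel(\overline{H(\rho_{(-)},-)})$'', and by Construction \ref{construction:hom_structure_freyd} the functor $H^{\Freyd}$ acts on morphisms through these very same operations, so the naturality square for $\nu^{\Freyd}$ is the image under $\Hom_{\BC}(1,-)$ of (a subdiagram of) the defining diagram of $H^{\Freyd}$, which commutes by the naturality of $\nu$. I expect the main obstacle to be purely bookkeeping in the third step — making precise that $\Hom_{\BC}(1,-)$ of a quotient is the corresponding quotient of abelian groups, that the subgroup $\{\lambda\cdot\rho_B\}$ is matched correctly under $\nu$, and that ``admits a witness'' is a class property — rather than anything conceptually deep; the two invocations of projectivity of $1$ (exactness of $\Hom_{\BC}(1,-)$, and lifting along the epimorphism onto the image) are exactly what makes the calculation close up.
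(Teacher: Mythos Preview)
Your proposal is correct and follows essentially the same approach as the paper: use projectivity of $1$ to make $\Hom_{\BC}(1,-)$ exact, commute it past the kernel and the quotients defining $H^{\Freyd}$, and then identify the resulting abelian-group kernel with $\Hom_{\Freyd(\PC)}$. The paper's proof is much terser---it packages everything into the single phrase ``$\Hom_{\BC}(1,-)$ commutes with all abelian constructions'' and says the final identification is ``readily'' seen---whereas you unpack explicitly why a class $[\alpha]$ lies in the kernel exactly when $\alpha$ admits a morphism witness; your ``second invocation of projectivity'' is just a concrete instance of that same exactness (preservation of the epimorphism onto the image).
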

\begin{proof}
 We are using the notation of Construction \ref{construction:hom_structure_freyd}.
 Since $1$ is projective, $\Hom_{\BC}(1,-)$ is exact and thus
 commutes with all abelian constructions.
 From this, it follows that $\Hom_{\BC}\big(1, \kernel( \overline{H(\rho_A,B)} ) \big)$
 is naturally isomorphic to the kernel of the morphism between abelian groups
 \begin{center}
    \begin{tikzpicture}[label/.style={postaction={
      decorate,
      decoration={markings, mark=at position .5 with \node #1;}},
      mylabel/.style={thick, draw=black, align=center, minimum width=0.5cm, minimum height=0.5cm,fill=white}}]
      \coordinate (r) at (6,0);
      \coordinate (u) at (0,2.5);
      \node (A) at ($(K1) + (r)$) {$\frac{\Hom_{\PC}(A,B)}{\image(\Hom_{\PC}(A,\rho_B))}$};
      \node (A2) at ($(A)+(r)$) {$\frac{\Hom_{\PC}(R_A,B)}{\image(\Hom_{\PC}(R_A,\rho_B))}$};
      \draw[->,thick] (A) --node[above]{$\overline{\Hom_{\PC}(\rho_A,B)}$} (A2);
    \end{tikzpicture}
  \end{center}
 which readily identifies with $\Hom_{\Freyd(\PC)}( (A \stackrel{\rho_A}{\longleftarrow} R_A), (B \stackrel{\rho_B}{\longleftarrow} R_B))$.
\end{proof}

\begin{corollary}\label{corollary:iterated_hom_structures}
 Let $\PC$ be an additive category equipped with a $\BC$-homomorphism structure $(1, H, \nu)$.
 If $\BC$ is abelian and if $1 \in \BC$ is a projective object,
 then all iterated Freyd categories $\XC$ of $\PC$
 can be equipped with a $\BC$-homomorphism structure.
\end{corollary}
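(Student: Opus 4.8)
The plan is to argue by induction along the inductive definition of the set of iterated Freyd categories of $\PC$, keeping track of the fact that the target category $\BC$ and the distinguished object $1 \in \BC$ never change during the process. Concretely, I would prove by induction that every iterated Freyd category $\XC$ of $\PC$ admits a $\BC$-homomorphism structure whose distinguished object is the \emph{same} projective object $1 \in \BC$ that was given for $\PC$. The base case $\XC = \PC$ is exactly the hypothesis $(1, H, \nu)$.

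For the first inductive step, suppose $\XC$ carries a $\BC$-homomorphism structure $(1, H_{\XC}, \nu_{\XC})$ with $1 \in \BC$ projective. Then Example \ref{example:op} produces, by swapping the two arguments of $H_{\XC}$, a $\BC$-homomorphism structure for $\XC^{\op}$ with the very same distinguished object $1$; since $\BC$ and $1$ are unchanged, the inductive hypothesis (abelian $\BC$, projective $1$) is preserved. For the second inductive step, suppose again $\XC$ carries a $\BC$-homomorphism structure $(1, H_{\XC}, \nu_{\XC})$ with $\BC$ abelian and $1 \in \BC$ projective. Then Theorem \ref{theorem:induced_hom_structure} applies verbatim: the bilinear functor $H_{\XC}^{\Freyd}$ of Construction \ref{construction:hom_structure_freyd} fits into a $\BC$-homomorphism structure $(1, H_{\XC}^{\Freyd}, \nu_{\XC}^{\Freyd})$ for $\Freyd(\XC)$, again with distinguished object $1$. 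So both constructors that generate iterated Freyd categories preserve the property ``admits a $\BC$-homomorphism structure with distinguished object the projective $1 \in \BC$'', which closes the induction.

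There is essentially no hard step here; the only thing that has to be checked — and the point worth flagging — is that the hypotheses of Theorem \ref{theorem:induced_hom_structure} (namely that $\BC$ be abelian and $1 \in \BC$ be projective) are \emph{invariants} of the induction rather than something that might degrade when one passes to $\Freyd(\XC)$ or $\XC^{\op}$. This holds trivially because neither constructor touches $\BC$ or $1$: the target remains $\BC$ and the distinguished object remains $1$ in Example \ref{example:op} and in Construction \ref{construction:hom_structure_freyd}. Hence the corollary follows immediately from Example \ref{example:op} and Theorem \ref{theorem:induced_hom_structure} by structural induction.
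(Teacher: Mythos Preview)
Your proof is correct and follows essentially the same approach as the paper: structural induction on the definition of iterated Freyd categories, using Example~\ref{example:op} for the $\XC^{\op}$ step and Theorem~\ref{theorem:induced_hom_structure} for the $\Freyd(\XC)$ step. Your explicit observation that the invariant ``$\BC$ abelian and $1 \in \BC$ projective'' is preserved through each constructor is a helpful elaboration that the paper leaves implicit.
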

\begin{proof}[Proof by induction]
 The case $\XC = \PC$ is trivial. The case $\XC^{\op}$ follows from Example \ref{example:op}.
 The case $\Freyd( \XC )$ is Theorem \ref{theorem:induced_hom_structure}.
\end{proof}

We summarize the computationally best case for rings.

\begin{corollary}\label{corollary:fpmod}
Let $R$ be a left coherent ring that is finitely presented as a module over its center $C$.
If $C$ is computable,
then all iterated Freyd categories $\XC$ of $\Freyd( \Rows_R )$
are computable abelian, have decidable linear systems (in particular lifts and colifts),
and have an $\Freyd( \Rows_C )$-homomorphism structure.
\end{corollary}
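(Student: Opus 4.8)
The plan is to assemble the preceding results, so this is mostly bookkeeping. Write $\BC := \Freyd( \Rows_C )$. First I would dispatch the \emph{abelian} claim: since $R$ is left coherent, $\Freyd( \Rows_R )$ is abelian by Theorem~\ref{theorem:rings_and_freyd_1}, and the class of abelian categories is closed under passage to opposites and — because abelian categories possess kernels, hence weak kernels — under the operation $\Freyd(-)$ (Theorem~\ref{theorem:freyd}); an induction along the inductive definition of iterated Freyd categories then shows that every iterated Freyd category $\XC$ of $\Freyd( \Rows_R )$ is abelian. Note for later that any iterated Freyd category of $\Freyd( \Rows_R )$ is, by that same inductive definition, also an iterated Freyd category of $\Rows_R$.

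Next I would collect the properties of $\BC$ that the homomorphism-structure machinery needs. Being the center of $R$, the ring $C$ is commutative, and being computable it is in particular left coherent, so $\BC = \Freyd( \Rows_C )$ is abelian (Theorem~\ref{theorem:rings_and_freyd_1}). The distinguished object of the homomorphism structure of Example~\ref{example:hom_struct_noncomm} is $C^{1 \times 1}$, which under $C\fpmodl \simeq \BC$ (Example~\ref{example:fpmod}) corresponds to the image of the free module $C^{1 \times 1}$, a projective object of $\BC$. Finally — and this is the one point where commutativity of $C$ is genuinely used — $\BC$ itself has decidable lifts: since $C$ is commutative and, being left computable, has decidable lifts, Example~\ref{example:fpmod_lifts} shows that $\Freyd( \Rows_C ) = \BC$ has decidable linear systems, in particular decidable lifts.

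With these in hand I would run the two corollaries. Because $R$ is finitely presented as a module over its center $C$, the category $\Rows_R$ carries a $\BC$-homomorphism structure (Example~\ref{example:hom_struct_noncomm}). Corollary~\ref{corollary:iterated_hom_structures}, applicable since $\BC$ is abelian with projective distinguished object, then equips every iterated Freyd category of $\Rows_R$ — hence every iterated Freyd category $\XC$ of $\Freyd( \Rows_R )$ — with a $\BC$-homomorphism structure. Since $\BC$ has decidable lifts, Corollary~\ref{corollary:computable_lifts_in_Freyd} shows that each such $\XC$ has decidable linear systems; a lifting problem is a linear system with one equation in one indeterminate, a colifting problem is a lifting problem in $\XC^{\op}$ (again an iterated Freyd category of $\Freyd( \Rows_R )$), and deciding $\alpha = \beta$ amounts to deciding the existence of a lift of $\alpha - \beta$ along $0 \rightarrow B$, so each $\XC$ has decidable lifts, decidable colifts, and decidable equality of morphisms. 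Together with abelianness this makes $\XC$ computable abelian, completing the proof. The only genuine obstacle is the decidable-lifts check for $\BC$ above; everything else is assembling cited statements and checking that their hypotheses are met.
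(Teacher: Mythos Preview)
Your proof is correct and follows essentially the same route as the paper: it assembles Corollary~\ref{corollary:computable_lifts_in_Freyd}, Corollary~\ref{corollary:iterated_hom_structures}, and Theorem~\ref{theorem:rings_and_freyd}, which is exactly what the paper's (very terse) proof does. You simply spell out in detail how the hypotheses of each cited result are met, including the verification that $\BC = \Freyd(\Rows_C)$ is abelian with projective distinguished object and has decidable lifts, and you make explicit the easy induction showing abelianness propagates through iterated Freyd categories; the paper leaves all of this to the reader.
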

\begin{proof}
Use Corollary \ref{corollary:computable_lifts_in_Freyd}, Corollary \ref{corollary:iterated_hom_structures},
and Theorem \ref{theorem:rings_and_freyd}.
Also note that the assumptions imply that $R$ is in fact a left computable ring.
\end{proof}

We close this section with an example
of a ring that does not meet the assumptions of Corollary \ref{corollary:fpmod}.

\begin{theorem}\label{theorem:ring_non_coherent_but_iterated_freyd_computable}
 Let $k$ be a field with decidable equality and set
 \[
 R := k[ z, x_i \mid i \in \N ] / \langle zx_i \mid i \in \N \rangle.
 \]
 Then $R$ has decidable lifts, is not left coherent,
 but
 $\Freyd( \Freyd( \Rows_R )^{\op} )$ is a computable abelian category
 that has decidable linear systems.
\end{theorem}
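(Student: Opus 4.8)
The plan is to establish the three assertions in turn; only the decidability of lifts over $R$ requires genuine work. For that, the key observation is that $R$ is commutative and that any finite family of matrices over $R$ involves only finitely many of the variables $x_i$. For $N \in \N$ put $R_N := k[z, x_1, \dots, x_N]/\langle zx_1, \dots, zx_N \rangle$. The $k$-algebra homomorphism induced by the polynomial inclusion $k[z, x_1, \dots, x_N] \hookrightarrow k[z, x_i \mid i \in \N]$ descends to a map $\iota \colon R_N \to R$, and the assignment $z \mapsto z$, $x_i \mapsto x_i$ for $i \le N$, $x_i \mapsto 0$ for $i > N$ respects the relations $zx_j$ and hence defines a ring homomorphism $\pi_N \colon R \to R_N$ with $\pi_N \circ \iota = \id_{R_N}$; in particular $\iota$ is a split embedding. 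Given matrices $A, B$ over $R$ one reads off an $N$ with all entries in the image of $\iota$, say $A = \iota(A')$, $B = \iota(B')$. Then $XA = B$ is solvable over $R$ if and only if $X'' A' = B'$ is solvable over $R_N$: from a solution $X''$ over $R_N$ one obtains the solution $\iota(X'')$ over $R$, and applying $\pi_N$ entrywise to any solution $X$ over $R$ gives $\pi_N(X)\,A' = B'$, using $\pi_N(\iota(A')) = A'$ and $\pi_N(\iota(B')) = B'$. Since $R_N$ is a finitely generated algebra over the field $k$ (which has decidable equality), it is Noetherian and computable via Gr\"obner bases (a standard fact from computer algebra), hence has decidable lifts; running that algorithm over $R_N$ decides and, when possible, solves the lifting problem over $R$.

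For the non-coherence of $R$ I would exhibit a finitely generated ideal that fails to be finitely presented, namely the kernel $\mathrm{Ann}_R(z)$ of the multiplication-by-$z$ endomorphism of $R$. Writing every $f \in R$ uniquely as $p + g$ with $p \in k[z]$ and $g$ in the $k$-span of the positive-degree monomials in the $x_i$ (a legitimate $k$-basis description, since the relations $zx_i$ annihilate exactly the monomials divisible by both $z$ and some $x_i$), one finds $zf = zp$, whence $\mathrm{Ann}_R(z) = \langle x_i \mid i \in \N \rangle_R$. This ideal is not finitely generated, because finitely many of the $x_i$ generate an ideal all of whose elements involve, in the above basis, only those finitely many variables. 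Hence $\Rows_R$ has no weak kernels, i.e.\ $R$ is not left coherent. I expect this to be the routine step.

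For the iterated Freyd category, abelianness is formal: $\Freyd(\Rows_R)$ has cokernels by Construction~\ref{construction:cokernel}, so $\Freyd(\Rows_R)^{\op}$ has kernels, in particular weak kernels, and Theorem~\ref{theorem:freyd} yields that $\Freyd(\Freyd(\Rows_R)^{\op})$ is abelian. For the computational part, since $R$ is commutative and has decidable lifts by the first step, Example~\ref{example:fpmod_lifts} (resting on the $\Rows_R$-homomorphism structure of Example~\ref{example:hom_struct_comm} and on Corollary~\ref{corollary:computable_lifts_in_Freyd}) shows that $\Freyd(\Rows_R)$ and all other iterated Freyd categories of $\Rows_R$ have decidable linear systems. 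Since $\Freyd(\Freyd(\Rows_R)^{\op})$ arises from $\Rows_R$ by applying $\Freyd(-)$, then $(-)^{\op}$, then $\Freyd(-)$, it is such an iterated Freyd category and therefore has decidable linear systems; in particular equality of morphisms is decidable, being the solvability of the linear system $\alpha - \beta = 0$, so the category is computable abelian.

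The step I expect to be the main obstacle is the first one: reducing the lifting problem over the non-coherent ring $R$ to the Noetherian subalgebra $R_N$ via the retraction $\pi_N$, together with the verification that a solution over $R$ can be pushed forward to one over $R_N$, so that no ``extra'' variables $x_i$ are needed. Everything afterwards is bookkeeping or an appeal to the general theory of Sections~\ref{section:constructive_freyd_categories} and~\ref{section:lifts}.
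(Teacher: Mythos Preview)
Your proposal is correct and follows essentially the same route as the paper: the retraction $\pi_N$ onto the Noetherian subalgebra $R_N$ (the paper indexes by an arbitrary finite variable set $M$ rather than an initial segment, a purely cosmetic difference), the annihilator-of-$z$ argument for non-coherence, and the appeal to Example~\ref{example:fpmod_lifts} plus the cokernel/kernel observation for the iterated Freyd category. The only stylistic difference is that the paper invokes Corollary~\ref{corollary:freyd} for computability, whereas you observe directly that decidable linear systems subsume decidable equality; both are fine.
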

\begin{proof}
 $R$ is not left coherent since the
 the kernel (in $R\Modl$) of
 $R^{1 \times 1 } \stackrel{(z)}{\longrightarrow} R^{1 \times 1}$
 is given by $\langle x_i \mid i \in \N \rangle \leq R^{1 \times 1}$
 and a simple degree argument shows that it cannot be generated by finitely many elements.
 Next, we show that $\Rows_R$ has decidable lifts.
 For any finite subset $M \subseteq \{x_i \mid i \in \N\}$ of variables,
 define $R_M := k[z,m \mid m \in M]/ \langle zm \mid m \in M \rangle$.
 The following ring homomorphisms
 \begin{center}
    \begin{tikzpicture}[label/.style={postaction={
      decorate,
      decoration={markings, mark=at position .5 with \node #1;}},
      mylabel/.style={thick, draw=black, align=center, minimum width=0.5cm, minimum height=0.5cm,fill=white}}]
      \coordinate (r) at (5,0);
      \coordinate (u) at (0,2.5);
      \node (A) {$R_M$};
      \node (B) at ($(A)+(r)$) {$R$};
      \node (C) at ($(B)+(r)$) {$R_M$};
      \draw[right hook->,thick] (A) --node[below]{$ 
              \begin{array}{cc}
                 z \mapsto z  \\
                 m \mapsto m 
              \end{array}$} (B);
      \draw[->>,thick] (B) --node[below]{$ 
              \begin{array}{cc}
                 z \mapsto& \hspace{-7.5em}z  \\
                 x_i \mapsto&
                 \left\{ 
                    \begin{array}{cc}
                 x_i & x_i \in M  \\
                 0 & \text{else}  
                  \end{array} 
                 \right.
              \end{array}$} (C);
      \draw[bend left,->,thick,label={[above]{$\id$}},out=25,in=155] (A) to (C);
    \end{tikzpicture}
  \end{center}
 give rise to the functor $- \otimes_{R_M} R: \Rows_{R_M} \rightarrow \Rows_{R}$
 that interprets entries in $R_M$ as entries in $R$,
 and to the functor $- \otimes_{R} R_M: \Rows_{R} \rightarrow \Rows_{R_M}$
 that replaces every $x_i \not\in M$ with $0$.
 
 Let $R^{1 \times a} \stackrel{A}{\longrightarrow} R^{1 \times b} \stackrel{C}{\longleftarrow} R^{1 \times c}$
 be a cospan for $a,b,c \in \N_0$. We define a particular finite subset $M$ by taking all variables $x_i$
 that occur in the representatives of the entries in $A$ and $C$.
 Due to this choice, we have $(A \otimes_{R} R_M) \otimes_{R_M} R = A$ and $(C \otimes_{R} R_M) \otimes_{R_M} R = C$.
 It follows that any lift $L$ of $(A \otimes_{R} R_M)$ along $(C \otimes_{R} R_M)$
 yields a lift $L \otimes_{R_M} R$ of $A$ along $C$.
 Conversely, any lift $L'$ of $A$ along $C$ yields a lift $L' \otimes_{R} R_M$ of $A\otimes_{R} R_M$ along $C\otimes_{R} R_M$.
 Since $R_M$ is a computable ring (by means of Gröbner bases \cite{GP}), $\Rows_R$ has decidable lifts.
 
 It follows from Example \ref{example:fpmod_lifts}
 that $\Freyd( \Rows_R )^{\op}$ and $\Freyd( \Freyd( \Rows_R )^{\op} )$ have decidable linear systems.
 Furthermore, $\Freyd( \Rows_R )^{\op}$ has kernels since $\Freyd( \Rows_R )$ has cokernels (by Construction \ref{construction:cokernel}).
 Now, it follows from Corollary \ref{corollary:freyd} that $\Freyd( \Freyd( \Rows_R )^{\op} )$ is a computable abelian category.
\end{proof}

\section{Applications to finitely presented functors}\label{section:applications_fpfunctors}

For an additive category $\AC$ 
we have the classical interpretations
\begin{equation}\label{equation:freyd_fp_functors_id}
 \Freyd( \AC ) \simeq \fp( \AC^{\op}, \Ab ) \hspace{1em} \text{and} \hspace{1em} \Freyd( \AC^{\op} ) \simeq \fp( \AC, \Ab )
\end{equation}
of the Freyd categories as categories of finitely presented (contravariant) functors
(see Example \ref{example:fpfunctors}).
The categories $\fp( \AC^{\op}, \Ab )$ and $\fp( \AC, \Ab )$
in the case $\AC$ abelian
were extensively studied by Auslander in \cite{A}.
The goal of this section is to
benefit from our constructive approach to Freyd categories
for the study of finitely presented functors via the equivalence \eqref{equation:freyd_fp_functors_id}:
we describe constructions on the level of Freyd categories
and interpret them in terms of (classical) notions and constructions of finitely presented functors.

\subsection{Ext and Tor}

\begin{example}\label{example:exts}
Let $\AC$ be an abelian category with enough projectives.
Then for all $A \in \AC$ and $i \geq 0$, the functors
$\Ext^i(A,-)$ are finitely presented \cite{A}.
We describe a way to construct them as objects in $\mathcal{A}(\AC^{\op})$.
To any chain complex 
\begin{center}
    \begin{tikzpicture}[label/.style={postaction={
    decorate,
    decoration={markings, mark=at position .5 with \node #1;}},
    mylabel/.style={thick, draw=none, align=center, minimum width=0.5cm, minimum height=0.5cm,fill=white}}]
    \coordinate (r) at (2.5,0);
    \coordinate (u) at (0,2);
    \node (A) {$A_{\bullet}:$};
    \node (dots1) at ($(A)+0.5*(r)$) {$\dots$};
    \node (B) at ($(dots1)+(r)$) {$A_{i-1}$};
    \node (C) at ($(B) + (r)$) {$A_{i}$};
    \node (D) at ($(C) + (r)$) {$A_{i+1}$};
    \node (dots2) at ($(D)+(r)$) {$\dots$};
    \draw[->,thick] (B) -- (dots1);
    \draw[->,thick] (C) --node[above]{$d^A_i$} (B);
    \draw[->,thick] (D) --node[above]{$d^A_{i+1}$} (C);
    \draw[->,thick] (dots2) -- (D);
    \end{tikzpicture}
\end{center}
in $\AC$, we can associate the object $(\kernel(d^A_{i}) \hookrightarrow A_i) \in \mathcal{A}(\AC^{\op})$
and this association is easily seen to define a contravariant functor $\ker_i$ from
the category of chain complexes modulo homotopy $K_{\bullet}(\AC)$ to $\mathcal{A}(\AC^{\op})$.
Now, let $\ProjRes: \AC \rightarrow K_{\bullet}(\AC)$
denote the functor sending an object $A\in \AC$ to its projective resolution $(P_{\bullet},d^P_{\bullet})$
(which is uniquely determined up to homotopy).
Then the functor
\[
 \AC^{\op} \longrightarrow \fp( \AC, \Ab): A \mapsto \Ext^i(A,-)
\]
corresponds to 
\[
\ker_i \circ \ProjRes^{\op}: \AC^{\op} \longrightarrow \Freyd( \AC^{\op} ): A \mapsto (\Omega^i A \hookrightarrow P_{i-1})
\]
via the equivalence \eqref{equation:freyd_fp_functors_id},
where $\Omega^i A := \kernel( d^P_{i-1} )$ denotes the $i$-th syzygy object.
Note that this correspondence is simply given by the
description of $\Ext^i$ in terms of an $i$-th right satellite:
\begin{align*}
 \Ext^i(A,-)(B) & \simeq S^i(-,B)(A) \\
                & \simeq \cokernel\left( (\Omega^i A, B ) \longleftarrow ( P_{i-1}, B ) \right) 
\end{align*}
natural in $A,B \in \AC$ (see \cite[Chapter 3]{ce} for an explanation of satellites).
\end{example}

\begin{example}\label{example:tors}
 Let $R$ be a ring and $M$ be a right $R$-module.
 Then it is shown in \cite[Lemma 6.1]{A}
 that the functor $(M \otimes_R -): R\Modl \rightarrow \Ab$
 is finitely presented if and only if $M$ is a finitely presented module.
 Concretely, if $0 \longleftarrow M \longleftarrow P_0 \longleftarrow P_1$ is a 
 presentation with $P_0, P_1$ finitely presented projective modules, then
 right exactness of the tensor product implies exactness of the rows in
 \begin{center}
    \begin{tikzpicture}[label/.style={postaction={
    decorate,
    decoration={markings, mark=at position .5 with \node #1;}},
    mylabel/.style={thick, draw=none, align=center, minimum width=0.5cm, minimum height=0.5cm,fill=white}}]
    \coordinate (r) at (4,0);
    \coordinate (u) at (0,1.5);
    \node (A) {$0$};
    \node (B) at ($(A)+(r)$) {$(M \otimes_R -)$};
    \node (C) at ($(B)+(r)$) {$(P_0 \otimes_R -)$};
    \node (D) at ($(C) + (r)$) {$(P_1 \otimes_R -)$};
    \node (A2) at ($(A) - (u)$) {$0$};
    \node (B2) at ($(A2)+(r)$) {$(M \otimes_R -)$};
    \node (C2) at ($(B2)+(r)$) {$(P_0^{\vee},-)$};
    \node (D2) at ($(C2) + (r)$) {$(P_1^{\vee},-)$,};
    \draw[->,thick] (B) -- (A);
    \draw[->,thick] (C) -- (B);
    \draw[->,thick] (D) -- (C);
    
    \draw[draw=none] (B) --node[above,rotate=90,inner sep=0em]{$=$} (B2);
    \draw[->,thick] (C) --node[above,rotate=90,inner sep=0em]{$\sim$} (C2);
    \draw[->,thick] (D) --node[above,rotate=90,inner sep=0em]{$\sim$} (D2);
    
    \draw[->,thick] (B2) -- (A2);
    \draw[->,thick] (C2) -- (B2);
    \draw[->,thick] (D2) -- (C2);
    \end{tikzpicture}
\end{center}
 where $(-)^{\vee}$ denotes the dualization $\Hom_R(-,R)$.
 Thus, $(M \otimes_R -)$ corresponds via \eqref{equation:freyd_fp_functors_id} to 
 \[(P_0^{\vee} \longrightarrow P_1^{\vee}) \in \Freyd( R\Modl^{\op} ).\]
 
 Next, we discuss the $\Tor$ functors.
 We set $\Tr(M) := \cokernel( P_0^{\vee} \longrightarrow P_1^{\vee} )$.
 Then there is an exact sequence of the form
 \begin{equation}\label{equation:tor_seq}
  0 \longleftarrow \Tor_1(M,-) \longleftarrow (\Tr(M),-) \longleftarrow (\Tr(M)^{\vee}\otimes_R -)
 \end{equation}
 due to \cite[Proposition 6.3]{A} and \cite[Proposition 7.1]{A}.
 Let us translate this sequence to $\Freyd( R\Modl^{\op} )$ in the case where $\Tr(M)^{\vee}$ is finitely presented
 (e.g., when $R$ is right coherent).
 Let 
 \[
 0 \longleftarrow \Tr(M)^{\vee} \stackrel{\varepsilon}{\longleftarrow} Q_0 \stackrel{q}{\longleftarrow} Q_1
 \]
 be a presentation with $Q_0, Q_1$ finitely presented projective right modules.
 Then a short computation shows that the following diagram
 \begin{center}
    \begin{tikzpicture}[label/.style={postaction={
    decorate,
    decoration={markings, mark=at position .5 with \node #1;}},
    mylabel/.style={thick, draw=none, align=center, minimum width=0.5cm, minimum height=0.5cm,fill=white}}]
    \coordinate (r) at (3.5,0);
    \coordinate (u) at (0,2);
    \node (Z) {$0$};
    \node (T1) at ($(Z) + (r)$) {$\Tor_1(M,-)$};
    \node (TrM) at ($(T1) + (r)$) {$(\Tr(M),-)$};
    \node (TrMd) at ($(TrM) + 2*(r)$) {$(\Tr(M)^{\vee}\otimes_R - )$};
    
    \node (Q0) at ($(TrMd) + (u)$) {$(Q_0 \otimes_R -)$};
    \node (Q0d) at ($(TrMd) + (u) - (r)$) {$(Q_0^{\vee},-)$};
    
    \node (Q1) at ($(Q0) + (u)$) {$(Q_1 \otimes_R -)$};
    \node (Q1d) at ($(Q0d) + (u)$) {$(Q_1^{\vee},-)$};
    
    \node (Z2) at ($(TrMd) - (u)$) {$0$};
    
    \draw[->,thick] (T1) -- (Z);
    \draw[->,thick] (TrM) -- (T1);
    \draw[->,thick] (TrMd) -- (TrM);
    
    \draw[->,thick] (Q0) -- (TrMd);
    \draw[->,thick] (Q1) -- (Q0);
    \draw[->,thick] (TrMd) -- (Z2);
    
    \draw[->,thick] (Q0) --node[above,inner sep = 0em]{$\sim$} (Q0d);
    \draw[->,thick] (Q1) --node[above,inner sep = 0em]{$\sim$} (Q1d);
    
    \draw[->,thick] (Q1d) --node[right]{$(q^{\vee},-)$} (Q0d);
    
    \draw[->,thick] (Q0d) --node[right,yshift = -0.3em]{$(\psi,-)$} (TrM);
    
    \draw (Q1d) [bend right,->,thick,out=-45]  to node[right]{$0$} (TrM);
    \end{tikzpicture}
\end{center}
 commutes, where 
 \[
 \psi := \Tr(M) \longrightarrow (\Tr(M)^{\vee})^{\vee} \stackrel{\varepsilon^{\vee}}{\longrightarrow} Q_0^{\vee}.
 \]
 It follows that the natural transformation $(\Tr(M),-) \longleftarrow (\Tr(M)^{\vee}\otimes_R -)$
 is induced by the universal property of the cokernel
 of $(Q_0^{\vee},-) \longleftarrow (Q_1^{\vee},-)$.
 But the computation of such a morphism is a task that we can easily translate to $\Freyd( R\Modl^{\op} )$
 using the methods of Section \ref{section:constructive_freyd_categories}.
 We end up with the following translation:
 \begin{center}
    \begin{tikzpicture}[label/.style={postaction={
    decorate,
    decoration={markings, mark=at position .5 with \node #1;}},
    mylabel/.style={thick, draw=none, align=center, minimum width=0.5cm, minimum height=0.5cm,fill=white}}]
    \coordinate (r) at (4,0);
    \coordinate (u) at (0,2);
    \node (A) {$0$};
    \node (B) at ($(A)+(r)$) {$\Tor_1(M,-)$};
    \node (C) at ($(B)+(r)$) {$(\Tr(M),-)$};
    \node (D) at ($(C) + (r)$) {$(\Tr(M)^{\vee}\otimes_R -)$};
    \node (A2) at ($(A) - (u)$) {$0$};
    \node (B2) at ($(A2)+(r)$) {$(\Tr(M) \stackrel{\psi}{\longrightarrow} Q_0^{\vee})$};
    \node (C2) at ($(B2)+(r)$) {$(\Tr(M) {\longrightarrow} 0)$};
    \node (D2) at ($(C2) + (r)$) {$(Q_0^{\vee} \stackrel{q^{\vee}}{\longrightarrow} Q_1^{\vee})$.};
    \node (I1) at ($(B) + (r) - 0.5*(u)$) {$\big\updownarrow$\text{~ translates to}};
    \draw[->,thick] (B) -- (A);
    \draw[->,thick] (C) -- (B);
    \draw[->,thick] (D) -- (C);
    \draw[->,thick] (B2) --node[below]{$\{1,0\}$} (C2);
    \draw[->,thick] (C2) --node[below]{$\{\psi,0\}$} (D2);
    \draw[->,thick] (A2) -- (B2);
    \end{tikzpicture}
\end{center}
Note that since we work with $\Freyd(\PC)$ for $\PC = R\Modl^{\op}$, the arrows in the second sequence point to the right.

For higher $\Tor$s, we can use the isomorphism $\Tor_i(M,-) \simeq \Tor_{1}( \Omega^{i-1}M,-)$ for $i \geq 1$
whenever $\Omega^{i-1}M$ is a finitely presented left module and $\Tr(\Omega^{i-1}M)^{\vee}$
is a finitely presented right module (for example in the case where $R$ is both left and right coherent).

Alternatively, we can represent $\Tor_i(M,-)$ as a finitely presented functor in terms of left satellites:
for $N \in R\Modl$, we have
\begin{align*}
 \Tor_i( M, - )(N) &\simeq S_i( - \otimes N)(M) \\
                   &\simeq \kernel( \Omega^i M \otimes N \longrightarrow P_{i-1} \otimes M ),
\end{align*}
where $\Omega^i M \hookrightarrow P_{i-1}$ denotes the embedding of the $i$-th syzygy object
in the $(i-1)$-th object of a projective resolution of $M$. If $\Omega^i M$ and $P_{i-1}$ are finitely presented as modules,
then so is
\[
 \Tor_i(M,-) \simeq \kernel \left( (\Omega^i M \otimes -) \longrightarrow (P_{i-1} \otimes -) \right)
\]
as a functor (see also \cite[Theorem 10.2.35]{PrestPSL}).
\end{example}

\begin{remark}
 Let $R$ be a ring.
 Then $\fp( R\fpmodl, \Ab )$ can be seen as a 
 full and exact subcategory of $\fp( R\Modl, \Ab)$ by mapping 
 a finitely presented functor on $R\fpmodl$ to its colimit extension
 (see, e.g., the discussion in \cite[Section 4]{MartRussII}).
 Thus, if we use our model $R\fpmodl \simeq \Freyd( \Rows_R )$ described
 in Example \ref{example:fpmod}, then
 \[\Freyd( \Freyd( \Rows_R )^{\op}) \simeq \fp( R\fpmodl, \Ab ) \subseteq \fp( R\Modl, \Ab) \]
 gives a model for explicit computations with those finitely presented functors
 on $R\Modl$ that commute with filtered colimits.
\end{remark}

\subsection{Computing sets of natural transformations}\label{subsection:nat}
If $R$ is a left coherent ring
that is finitely presented as a module over its computable center $C$,
then due to Corollary \ref{corollary:fpmod},
$\Freyd( \Freyd( \Rows_R )^{\op} ) \simeq \fp( R\fpmodl, \Ab )$ 
has an $\Freyd( \Rows_C ) \simeq C\fpmodl$-homomorphism structure $H$.
This enables us to explicitly compute
sets of natural transformations.
For example, 
let $A$ be a finitely presented left module,
$M$ a finitely presented right module
such that $\Tr(M)^{\vee}$ is also finitely presented
(where we use the notation of Example \ref{example:exts}
and Example \ref{example:tors}).
Then we may determine for $i \geq 0$ (in a possibly new way)
\[\Hom_{\fp( R\fpmodl, \Ab )}\big( \Tor_1(M,-), \Ext^i(A,-) \big)\]
by computing
\[
 H\big( (\Tr(M) \stackrel{\psi}{\longrightarrow} Q_0^{\vee}), (\Omega^i A \hookrightarrow P_{i-1}) \big)
\]
as an object $(C^{1 \times m} \longleftarrow C^{1 \times n})$ in $\Freyd( \Rows_C )$, where $m,n \in \N_0$.
This tells us that as a $C$-module, $\Hom_{\fp( R\fpmodl, \Ab )}( \Tor_1(M,-), \Ext^i(A,-) )$
can be presented by $m$ generators and $n$ relations.
Using the definition of an $\Freyd( \Rows_C )$-homomorphism structure,
we can explicitly determine the $m$ morphisms in $\Freyd( \Freyd( \Rows_R )^{\op} )$ 
that correspond to our generators.

Thus, we have realized the idea of $\Hom$-computability
for $\Freyd( \Freyd( \Rows_R )^{\op} )$
in the sense of finding a way to ``oversee'' a $\Hom$-group in
that category. All we needed for the realization of that idea
was the notion of a homomorphism structure,
which is much easier to implement
on a computer than the creation of a constructive setup
for arbitrary enriched categories as it is suggested in
\cite[Appendix]{BL_ExtComputability}.

\subsection{Deciding left exactness}\label{subsection:left}

\begin{theorem}
 Let $\AC$ be an abelian category. An object
 $(A \stackrel{\rho_A}{\longleftarrow} R_A) \in \Freyd(\AC)$
 corresponds to a left exact functor in $\fp( \AC^{\op}, \Ab)$
 (via \eqref{equation:freyd_fp_functors_id})
 if and only if
 \[
  \{\varepsilon,0\}: (A \stackrel{\rho_A}{\longleftarrow} R_A) {\longrightarrow} (\cokernel{\rho_A} {\longleftarrow} 0)
 \]
 is an isomorphism in $\Freyd(\AC)$, where $\varepsilon: A \rightarrow \cokernel{\rho_A}$
 denotes the cokernel projection.
\end{theorem}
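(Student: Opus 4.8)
The plan is to transport the statement across the equivalence $U\colon \Freyd(\AC)\xrightarrow{\sim}\fp(\AC^{\op},\Ab)$ given by the induced functor of the Yoneda embedding (Example~\ref{example:fpfunctors}, Theorem~\ref{theorem:classical_equivalence}) and to argue on the side of finitely presented functors. Write $C:=\cokernel\rho_A$ and $\varepsilon\colon A\to C$ for the cokernel projection, so $\rho_A\cdot\varepsilon=0$. Under $U$ the object $(A\stackrel{\rho_A}{\longleftarrow}R_A)$ goes to the functor $F:=\cokernel\big((-,\rho_A)\colon(-,R_A)\to(-,A)\big)$, the object $(C\longleftarrow 0)$ goes to the representable functor $(-,C)$, and $\{\varepsilon,0\}$ goes to the natural transformation $\eta\colon F\to(-,C)$ induced on cokernels by $(-,\varepsilon)$ (equivalently, $\eta$ is the unique map with $\eta\circ q=(-,\varepsilon)$, where $q\colon(-,A)\to F$ is the cokernel projection). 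Since $U$ is an equivalence, $\{\varepsilon,0\}$ is an isomorphism in $\Freyd(\AC)$ if and only if $\eta$ is an isomorphism; so it suffices to show: \emph{$F$ is left exact if and only if $\eta$ is an isomorphism.}

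The implication ``$\Leftarrow$'' is immediate: if $\eta$ is an isomorphism then $F\simeq(-,C)$ is representable, and a contravariant representable functor sends right-exact sequences to left-exact ones. For ``$\Rightarrow$'', I apply the left-exact functor $F$ to the right-exact sequence $R_A\stackrel{\rho_A}{\longrightarrow}A\stackrel{\varepsilon}{\longrightarrow}C\to 0$ in $\AC$, obtaining a left-exact sequence $0\to F(C)\stackrel{F(\varepsilon)}{\longrightarrow}F(A)\stackrel{F(\rho_A)}{\longrightarrow}F(R_A)$. Representing $F(A)=\Hom_{\AC}(A,A)/\big(\Hom_{\AC}(A,R_A)\cdot\rho_A\big)$, the class $\overline{\id_A}$ lies in $\kernel\big(F(\rho_A)\big)$, because $F(\rho_A)$ sends it to the class of $\rho_A=\id_{R_A}\cdot\rho_A$, which vanishes in $F(R_A)$. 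By exactness there is a unique $u\in F(C)$ with $F(\varepsilon)(u)=\overline{\id_A}$, and by the Yoneda lemma $u$ corresponds to a natural transformation $\theta_u\colon(-,C)\to F$, $\theta_{u,X}(f)=F(f)(u)$. I claim $\theta_u$ is a two-sided inverse of $\eta$.

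Both composites are checked by Yoneda. The composite $\eta\circ\theta_u\colon(-,C)\to(-,C)$ corresponds to $\eta_C(u)\in\Hom_{\AC}(C,C)$; writing $u=\overline{\lambda}$ with $\lambda\colon C\to A$, the relation $F(\varepsilon)(u)=\overline{\id_A}$ unwinds to $\varepsilon\cdot\lambda-\id_A=h\cdot\rho_A$ for some $h\colon A\to R_A$, and appending $\varepsilon$ on the right together with $\rho_A\cdot\varepsilon=0$ yields $\varepsilon\cdot(\lambda\cdot\varepsilon)=\varepsilon$, whence $\lambda\cdot\varepsilon=\id_C$ since $\varepsilon$ is an epimorphism; thus $\eta_C(u)=\lambda\cdot\varepsilon=\id_C$ and $\eta\circ\theta_u=\id$. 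For $\theta_u\circ\eta\colon F\to F$ I precompose with the epimorphism $q\colon(-,A)\to F$: the resulting transformation $(-,A)\to F$ corresponds under Yoneda to $\theta_{u,A}\big(\eta_A(\overline{\id_A})\big)=\theta_{u,A}(\varepsilon)=F(\varepsilon)(u)=\overline{\id_A}$, which is exactly the element corresponding to $q$ itself; since $q$ is epic this forces $\theta_u\circ\eta=\id_F$. The only real content lies in the ``$\Rightarrow$'' direction — producing $u$ from left exactness and recognizing that $\theta_u$ inverts $\eta$; the ``$\Leftarrow$'' direction and the bookkeeping that matches $\{\varepsilon,0\}$ with $\eta$ (keeping all variances straight) are routine.
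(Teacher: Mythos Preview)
Your argument is correct. After transporting along the equivalence $U$, you prove directly that $\eta\colon F\to(-,C)$ is an isomorphism precisely when $F$ is left exact: the backward direction is the triviality that representables are left exact, and the forward direction is a clean Yoneda computation producing the inverse $\theta_u$ from the element $u\in F(C)$ supplied by left exactness. The checks that $\eta\circ\theta_u=\id$ and $\theta_u\circ\eta=\id$ are carried out correctly (in particular the cancellation step uses that $\varepsilon$ is an epimorphism, and the second identity uses that $q$ is an epimorphism of functors).

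This is a genuinely different route from the paper. The paper does not argue directly but invokes Auslander's result \cite[Section~3]{A} that $\eta\colon F\to(-,\cokernel\rho_A)$ is the \emph{universal left exact approximation} of $F$ (the reflection of $F$ into the full subcategory of left exact functors); the theorem then follows because an object is isomorphic to its reflection exactly when it already lies in the reflective subcategory. What the paper's approach buys is context: it identifies $\{\varepsilon,0\}$ as a unit of an adjunction, so the statement becomes an instance of a general principle. What your approach buys is self-containment: you reprove exactly the piece of Auslander's result that is needed, using nothing beyond Yoneda and the definition of left exactness, so the reader does not have to chase the reference.
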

\begin{proof}
 Let $F := \cokernel( (-,A) \stackrel{(-,\rho_A)}{\longleftarrow} (-,R_A) )$.
 In \cite[Section 3]{A}, it is shown that
 \[ F\longrightarrow (-,\cokernel{\rho_A}) \] 
 is the universal left exact approximation of $F$,
 i.e., for any left exact $G: \AC^{\op} \rightarrow \Ab$ and natural
 transformation $F \rightarrow G$, there exists 
 exactly one natural transformation making the diagram
 \begin{center}
    \begin{tikzpicture}[label/.style={postaction={
    decorate,
    decoration={markings, mark=at position .5 with \node #1;}},
    mylabel/.style={thick, draw=none, align=center, minimum width=0.5cm, minimum height=0.5cm,fill=white}}]
    \coordinate (r) at (1.5,0);
    \coordinate (u) at (0,1.5);
    \node (A) {$F$};
    \node (C) at ($(A)-(u)+(r)$) {$(-,\cokernel{\rho_A})$};
    \node (B) at ($(C)-2*(r)$) {$G$};
    
    \draw[->,thick] (A) -- (B);
    \draw[->,thick] (A) -- (C);
    \draw[->,thick,dashed] (C) -- (B);
    \end{tikzpicture}
\end{center}
 commutative. From this and the fact that any left exact functor is its own universal left exact approximation, the claim follows.
\end{proof}

\begin{remark}\label{remark:isomorphism}
Deciding if a given morphism $\varphi$ in $\Freyd(\AC)$ is an isomorphism 
can be done
by deciding whether $\id_{\kernel( \varphi )} = 0$ and $\id_{\cokernel( \varphi )} = 0$,
which can be constructively realized with the methods provided
by Section \ref{section:constructive_freyd_categories} and \ref{section:lifts}.
\end{remark}

\subsection{Computing injective resolutions}\label{subsection:inj}

\begin{theorem}\label{theorem:injectives}
 Let $\AC$ be an abelian category having enough projectives.
 Let furthermore $(A \stackrel{\rho_A}{\longrightarrow} R_A) \in \Freyd(\AC^{\op})$,
 $\varepsilon_Q: Q \twoheadrightarrow R_A$ an epimorphism with $Q$ projective,
 $\varepsilon_P: P \twoheadrightarrow A \times_{R_A} Q$ an epimorphism with $P$ projective.
 Set $\rho_P := \varepsilon_P \cdot \secondproj: P \rightarrow Q$.
 Then the morphism 
 \[
  \{ \varepsilon_P \cdot \firstproj, \varepsilon_Q \}: (A \stackrel{\rho_A}{\longrightarrow} R_A) \longleftarrow (P \stackrel{\rho_P}{\longrightarrow} Q) \in \Freyd(\AC^{\op})
 \]
 is a monomorphism with $(P \stackrel{\rho_P}{\longrightarrow} Q)$ an injective object.
\end{theorem}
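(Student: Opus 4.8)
The plan is to establish the two assertions separately: that $\iota := \{\varepsilon_P \cdot \firstproj, \varepsilon_Q\}$ is a monomorphism, and that $(P \stackrel{\rho_P}{\longrightarrow} Q)$ is an injective object. (That $\iota$ is a well-defined morphism of $\Freyd(\AC^{\op})$ is the routine check $(\varepsilon_P \cdot \firstproj) \cdot \rho_A = \varepsilon_P \cdot (\firstproj \cdot \rho_A) = \varepsilon_P \cdot (\secondproj \cdot \varepsilon_Q) = \rho_P \cdot \varepsilon_Q$, using the defining square of the pullback $A \times_{R_A} Q$ together with $\rho_P := \varepsilon_P \cdot \secondproj$.) The monomorphism part I would prove by a direct computation in $\Freyd(\AC^{\op})$, unwinding the definition of when a morphism there is zero; the only nontrivial input is the standard fact that in an abelian category a pullback square along an epimorphism is bicartesian, i.e.\ is simultaneously a pushout. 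The injective part I would deduce from the functor interpretation \eqref{equation:freyd_fp_functors_id} together with Auslander's description of the injective coherent functors.

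For the monomorphism, since $\Freyd(\AC^{\op})$ is abelian (Theorem \ref{theorem:freyd}), it suffices to show that every $\{\gamma, \rho_\gamma\} \colon (T \stackrel{\rho_T}{\longrightarrow} R_T) \to (A \stackrel{\rho_A}{\longrightarrow} R_A)$ with $\{\gamma, \rho_\gamma\} \cdot \iota = 0$ is itself zero. Tracing through the composition, the morphism datum of $\{\gamma, \rho_\gamma\} \cdot \iota$ is $(\varepsilon_P \cdot \firstproj) \cdot \gamma$, and a witness for this composite being zero is a morphism $\lambda \colon Q \to T$ with $\rho_P \cdot \lambda = (\varepsilon_P \cdot \firstproj) \cdot \gamma$. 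Substituting $\rho_P = \varepsilon_P \cdot \secondproj$ and cancelling the epimorphism $\varepsilon_P$ yields $\secondproj \cdot \lambda = \firstproj \cdot \gamma$. Because $\varepsilon_Q$ is an epimorphism, the pullback square defining $A \times_{R_A} Q$ is also a pushout, so $R_A$ is the pushout of $A \stackrel{\firstproj}{\longleftarrow} A \times_{R_A} Q \stackrel{\secondproj}{\longrightarrow} Q$ with structure maps $\rho_A$ and $\varepsilon_Q$. The universal property of this pushout, applied to $\gamma$ and $\lambda$, produces $\mu \colon R_A \to T$ with $\rho_A \cdot \mu = \gamma$; that is, $\mu$ witnesses $\{\gamma, \rho_\gamma\} = 0$. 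Hence $\iota$ is a monomorphism. (Alternatively one could try to identify $\iota$ up to isomorphism with a morphism of the shape treated in Lemma \ref{lemma:monos}, but the computation above is more direct.)

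For injectivity, I would pass through the equivalence $\Freyd(\AC^{\op}) \simeq \fp(\AC, \Ab)$ of Example \ref{example:fpfunctors} and \eqref{equation:freyd_fp_functors_id}, under which $(P \stackrel{\rho_P}{\longrightarrow} Q)$ corresponds to the finitely presented functor $\cokernel\bigl(\Hom_{\AC}(Q,-) \to \Hom_{\AC}(P,-)\bigr)$. Since $P$ and $Q$ are projective in $\AC$, both representables $\Hom_{\AC}(P,-)$ and $\Hom_{\AC}(Q,-)$ are exact, so this functor is right exact; by Auslander's work on coherent functors \cite{A}, the right exact objects of $\fp(\AC, \Ab)$ are exactly its injective objects. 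Transporting back, $(P \stackrel{\rho_P}{\longrightarrow} Q)$ is injective in $\Freyd(\AC^{\op})$.

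I expect the injectivity to be the main obstacle: the monomorphism part is essentially the bicartesian-square bookkeeping above, whereas injectivity rests on the correct form of Auslander's classification, so the work there is to pin down the precise statement in \cite{A} --- or, failing a clean citation, to give a self-contained proof that every short exact sequence in $\Freyd(\AC^{\op})$ with left-hand term $(P \stackrel{\rho_P}{\longrightarrow} Q)$ splits, using projectivity of $P$ and $Q$ in $\AC$ and the explicit (co)kernel constructions of Section \ref{section:constructive_freyd_categories}. A persistent minor nuisance throughout is keeping the $(-)^{\op}$ bookkeeping straight, since objects of $\Freyd(\AC^{\op})$ are written with their arrows pointing as in $\AC$ while composition takes place in $\AC^{\op}$.
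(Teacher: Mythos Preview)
Your argument is correct and considerably more explicit than the paper's. The paper proves this theorem by a single citation: it observes that the dual construction appears in Gentle \cite[below Proposition 1.4]{Gentle91}, where $\fp(\AC^{\op},\Ab)$ is identified with the category of left exact sequences in $\AC$ modulo chain homotopy, and defers both claims to that reference. You instead unpack the monomorphism claim by hand and reduce injectivity to Auslander's characterisation \cite[Lemma 5.1]{A} (which the paper itself invokes in Subsection \ref{subsection:right}).

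Your monomorphism argument is the more illuminating part of the comparison. The key step --- cancelling the epimorphism $\varepsilon_P$ to obtain $\secondproj \cdot \lambda = \firstproj \cdot \gamma$ and then invoking that the pullback square along the epimorphism $\varepsilon_Q$ is bicartesian --- is exactly the right mechanism and makes transparent why the construction works; nothing analogous is visible in the paper's citation-only proof. For injectivity, your route through right exactness (cokernel of a transformation between exact functors is right exact) and Auslander's classification is clean and matches the spirit of the surrounding section. The paper's route via Gentle has the advantage of giving both claims at once from a single structural identification, but at the cost of importing another equivalence of categories; your approach stays closer to the tools already developed in the paper. Your worry about finding ``the precise statement in \cite{A}'' is already resolved by the paper's own use of \cite[Lemma 5.1]{A} in Subsection~\ref{subsection:right}, so no self-contained splitting argument is needed.
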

\begin{proof}
The construction dual to the one described 
in our theorem can be found in
\cite[below Proposition 1.4]{Gentle91},
where Gentle identifies $\fp( \AC^{\op}, \Ab )$
with the category of left exact sequences in $\AC$
modulo chain homotopy, a category which is readily seen
to be equivalent to $\Freyd( \AC )$.
\end{proof}

\subsection{Deciding right exactness}\label{subsection:right}

\begin{theorem}
 Let $\AC$ be an abelian category having enough projectives.
 An object $(A \stackrel{\rho_A}{\longrightarrow} R_A) \in \Freyd(\AC^{\op})$
 corresponds
 to a right exact functor in $\fp( \AC, \Ab)$
 (via \eqref{equation:freyd_fp_functors_id})
 if and only if
 the monomorphism defined in Theorem \ref{theorem:injectives} splits.
\end{theorem}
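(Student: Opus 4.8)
The plan is to pass through the equivalence \eqref{equation:freyd_fp_functors_id}. Under it, the object $(A \stackrel{\rho_A}{\longrightarrow} R_A)$ corresponds to the finitely presented functor $F := \cokernel\big( (R_A,-) \longrightarrow (A,-) \big)$, the object $(P \stackrel{\rho_P}{\longrightarrow} Q)$ of Theorem \ref{theorem:injectives} corresponds to $I := \cokernel\big( (Q,-) \longrightarrow (P,-) \big)$, and the monomorphism of Theorem \ref{theorem:injectives} becomes a monomorphism $\iota\colon F \hookrightarrow I$ with $I$ an injective object of $\fp(\AC,\Ab)$. The key preliminary observation is that $I$ is right exact: since $P$ and $Q$ are projective objects of $\AC$, the representable functors $(P,-)$ and $(Q,-)$ have exact covariant $\Hom$-functors and are in particular right exact, and the cokernel of a morphism between right exact functors is again right exact (a routine diagram chase, taking cokernels being a right exact operation).

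For the direction in which $\iota$ splits, suppose $\iota$ splits. Then $F$ is a direct summand of $I$, and a direct summand of a right exact functor is right exact, since a direct sum of sequences of abelian groups is exact precisely when each summand is. Hence $(A \stackrel{\rho_A}{\longrightarrow} R_A)$ corresponds to a right exact functor.

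For the converse, suppose $F$ is right exact. Here I would invoke Auslander's description of the right exact finitely presented functors \cite[Sections 3 and 6]{A} (for modules see Example \ref{example:tors}): such an $F$ admits a presentation $(Q',-) \longrightarrow (P',-) \longrightarrow F \longrightarrow 0$ with $P'$ and $Q'$ projective objects of $\AC$; equivalently, $F$ is isomorphic in $\Freyd(\AC^{\op})$ to an object $(P' \stackrel{d'}{\longrightarrow} Q')$ with $P'$ and $Q'$ projective. By the reasoning behind Theorem \ref{theorem:injectives} --- Gentle's identification \cite{Gentle91} of $\fp(\AC,\Ab)$ with left exact sequences in $\AC^{\op}$ modulo homotopy, under which the injective objects are exactly those homotopy equivalent to a complex $P' \to Q'$ of projective objects of $\AC$ --- every such $(P' \stackrel{d'}{\longrightarrow} Q')$ is an injective object of $\fp(\AC,\Ab)$. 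Thus $F$ is injective, and therefore the monomorphism $\iota$ splits, since $\mathrm{id}_F$ extends along $\iota$ to a retraction $I \to F$. I expect this last direction to be the main obstacle: it rests on the classical fact (essentially Auslander's) that the right exact objects of $\fp(\AC,\Ab)$ are precisely the injective ones, together with matching this up against the concrete construction of Theorem \ref{theorem:injectives}. An alternative avoiding that citation would be to set $C := \cokernel(\iota)$, verify by a diagram chase that $C$ is right exact whenever $F$ is, and then prove directly that $0 \to F \to I \to C \to 0$ splits --- but identifying and killing the relevant class in $\Ext^1_{\fp(\AC,\Ab)}(C,F)$ appears to need the same input.
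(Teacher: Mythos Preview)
Your proof is correct, and in spirit close to the paper's, but the two are organized differently.

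The paper's argument is a two-liner: the monomorphism of Theorem~\ref{theorem:injectives} splits if and only if $(A \stackrel{\rho_A}{\longrightarrow} R_A)$ is injective in $\Freyd(\AC^{\op})$ (one direction because $I$ is injective and direct summands of injectives are injective, the other because a monomorphism out of an injective always splits), and then one invokes Auslander \cite[Lemma~5.1]{A} for the equivalence ``injective $\Leftrightarrow$ right exact'' in $\fp(\AC,\Ab)$. Both directions go through the single characterization of injectives.

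Your forward direction takes a genuinely more elementary route: you never touch injectivity, instead showing directly that $I$ is right exact (because $(P,-)$ and $(Q,-)$ are exact and cokernels preserve right exactness) and that direct summands of right exact functors are right exact. That is a nice self-contained argument and avoids citing Auslander for this half. Your backward direction, however, ends up at the same place as the paper's, just via a detour: you use Auslander to produce a projective presentation and then Gentle's description of injectives to conclude $F$ is injective, whereas the paper simply cites \cite[Lemma~5.1]{A} for ``right exact $\Rightarrow$ injective'' in one step. Your honest remark that the alternative (splitting the short exact sequence by hand) ``appears to need the same input'' is exactly right: there is no way around Auslander's identification here.
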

\begin{proof}
 The monomorphism splits if and only if $(A \stackrel{\rho_A}{\longrightarrow} R_A)$
 is an injective object,
 which is the case if and only if 
 it represents a right exact functor \cite[Lemma 5.1]{A}.
\end{proof}

\begin{remark}
Deciding if a given monomorphism $\varphi: A \rightarrow B$ splits
can be done
by deciding whether there exists a colift of the span $A \stackrel{\id}{\longleftarrow} A \stackrel{\varphi}{\longrightarrow} B$.
We have also seen (Corollary \ref{corollary:fpmod}) that
$\Freyd( \Freyd( \Rows_R )^{\op} )$
has decidable colifts
provided that
$R$ is a left coherent ring
that is finitely presented as a module over its computable center $C$.
\end{remark}

\begin{appendix}

\section{Axioms of categories}\label{appendixa}

We give complete definitions
of various kinds of categories in a constructive context.
We also hint at important differences to the corresponding list in \cite[Appendix B]{BL_GabrielMorphisms}.

\begin{definition}\label{definition:computable_category}
 A \textbf{category} $\AC$ consists of the following data:
 \begin{enumerate}
  \item A data type $\Obj_{\AC}$ 
        (\textbf{objects}).
  \item Depending on $A,B \in \Obj_{\AC}$, a data type $\Hom_{\AC}(A,B)$ (\textbf{morphisms}),
        each equipped with an equivalence relation $=$ (\textbf{equality}).
  \item An algorithm that computes for given $A,B,C \in \Obj_{\AC}$, $\alpha \in \Hom_{\AC}(A,B)$, $\beta \in \Hom_{\AC}(B,C)$
        a morphism $\alpha \cdot \beta \in \Hom_{\AC}(A,C)$
        (\textbf{composition}).
        If $D \in \Obj_{\AC}$ and $\gamma \in \Hom_{\AC}(C,D)$, then
        \[
         (\alpha \cdot \beta)\cdot \gamma = \alpha \cdot ( \beta \cdot \gamma ) \text{~(\textbf{associativity})}.
        \]
  \item An algorithm that constructs for given $A \in \Obj_{\AC}$ a morphism $\id_A \in \Hom_{\AC}(A,A)$
        (\textbf{identities}). For $B, C \in \Obj_{\AC}$, $\beta \in \Hom_{\AC}(B,A)$, $\gamma \in \Hom_{\AC}(A,C)$,
        we have 
        \[
         \beta \cdot \id_A = \beta \text{\hspace{1em} and \hspace{1em}} \id_A \cdot \gamma = \gamma.
        \]
  \end{enumerate}
\end{definition}

\begin{remark}\label{remark:treatment_of_equality}
Note that we treat equality for morphisms as an extra datum
attached to $\Hom_{\AC}(A,B)$ and so we think
of $\Hom_{\AC}(A,B)$ more as a setoid.
This point of view is very convenient
in our treatment of Freyd categories,
since witnesses of two morphisms being equal 
are actually used in concrete constructions and thus cannot be ignored (see Remark \ref{remark:constructiveness}).
Also note that this point is not stressed in the corresponding list of axioms in \cite[Appendix B]{BL_GabrielMorphisms}.
\end{remark}

\begin{definition}
  An \textbf{Ab-category} is a category $\AC$ for which we have:
  \begin{enumerate}
  \setcounter{enumi}{5}
  \item An algorithm that computes for given $A,B \in \Obj_{\AC}$, $\alpha, \beta \in \Hom_{\AC}(A,B)$
        a morphism $\alpha + \beta \in \Hom_{\AC}(A,B)$ (\textbf{addition}).
  \item An algorithm that constructs for given $A,B \in \Obj_{\AC}$ a morphism $0 \in \Hom_{\AC}(A,B)$
        (\textbf{zero morphisms}).
  \item An algorithm that computes for given $A,B \in \Obj_{\AC}$, $\alpha \in \Hom_{\AC}(A,B)$ 
        a morphism $-\alpha \in \Hom_{\AC}(A,B)$ (\textbf{additive inverse}).
  \item For $A,B,C \in \Obj_{\AC}$, $\alpha, \beta, \gamma \in \Hom_{\AC}(A,B)$, $\delta, \epsilon \in \Hom_{\AC}(B,C)$, we have
        \begin{enumerate}
         \item $\alpha + \beta = \beta + \alpha$ (\emph{commutativity}),
         \item $(\alpha + \beta) + \gamma = \alpha + (\beta + \gamma)$ (\emph{associativity}),
         \item $0 + \alpha = \alpha + 0 = \alpha$ (\emph{neutral element}),
         \item $\alpha + (-\alpha) = (-\alpha) + \alpha = 0$ (\emph{inverse element}),
         \item $\alpha \cdot (\delta + \epsilon) = \alpha \cdot \delta + \alpha \cdot \epsilon$ (\emph{left distributivity}),
         \item $(\alpha + \beta) \cdot \delta = \alpha \cdot \delta + \alpha \cdot \delta$ (\emph{right distributivity}).
        \end{enumerate}
\end{enumerate}
\end{definition}

\begin{definition}
  An \textbf{additive category} is an Ab-category $\AC$ for which we have:
  \begin{enumerate}
  \setcounter{enumi}{9}
  \item An algorithm that computes for a given finite (possibly empty) list of objects $A_1, \dots, A_n$ in $\Obj_{\AC}$ (for $n \in \N_0$)
        an object $\bigoplus_{i=1}^n A_i \in \Obj_{\AC}$ (\textbf{direct sum}).
        If we are additionally given an integer $i \in \{ 1 \dots n\}$,
        we furthermore have algorithms for computing 
        morphisms $\pi_i \in \Hom_{\AC}(\bigoplus_{i=1}^n A_i, A_i)$ (\textbf{direct sum projection})
        and $\iota_i: \Hom_{\AC}(A_i, \bigoplus_{i=1}^n A_i)$ (\textbf{direct sum injection}).
  \item The identities
        $\sum_{i=1}^n \pi_i \cdot \iota_i = \id_{\bigoplus_{i=1}^n A_i}$,
        $\iota_i \cdot \pi_i = \id_{A_i}$,
        and 
        $\iota_i \cdot \pi_j = 0$,
        hold for all $i,j = 1, \dots, n$, $i \not= j$.
 \end{enumerate}
\end{definition}

\begin{definition}
A \textbf{preabelian category} is an additive category $\AC$ for which we have:
 \begin{enumerate}
  \setcounter{enumi}{11}
  \item Algorithms that compute for given $A,B \in \Obj_{\AC}$, $\alpha \in \Hom_{\AC}(A,B)$ an object $\kernel( \alpha ) \in \Obj_{\AC}$ (\textbf{kernel object})
        and a morphism 
        \[\KernelEmbedding( \alpha ) \in \Hom_{\AC}(\kernel( \alpha ), A) \text{\hspace{1em}(\textbf{kernel embedding})}\]
        for which $\KernelEmbedding( \alpha ) \cdot \alpha = 0$.
  \item An algorithm that computes for given $A,B,T \in \Obj_{\AC}$, $\alpha \in \Hom_{\CC}(A,B)$, $\tau \in \Hom_{\AC}(T,A)$
        such that $\tau \cdot \alpha = 0$
        a morphism $u \in \Hom_{\AC}(T,\kernel( \alpha ))$ 
        such that 
        \[ u \cdot \KernelEmbedding( \alpha ) = \tau,\]
        where $u$ is uniquely determined (up to $=$) by this property
        (\textbf{kernel lift}).
  \item Algorithms that compute for given $A,B \in \Obj_{\AC}$, $\alpha \in \Hom_{\AC}(A,B)$ an object $\cokernel( \alpha )$ (\textbf{cokernel object}) 
        and a morphism
        \[
         \CokernelProjection(\alpha) \in \Hom_{\AC}( B, \cokernel( \alpha )) \text{\hspace{1em}(\textbf{cokernel projection})}
        \]
        such that $\alpha \cdot \CokernelProjection(\alpha) = 0$.
  \item An algorithm that computes for given $A,B,T \in \Obj_{\AC}$ $\alpha \in \Hom_{\AC}(A,B)$, $\tau \in \Hom_{\AC}(B,T)$
        such that $\alpha \cdot \tau = 0$ 
        a morphism $u \in \Hom_{\AC}(\cokernel( \alpha ), T)$
        such that 
        \[
        \CokernelProjection( \alpha ) \cdot u = \tau,
        \]
        where $u$ is uniquely determined (up to $=$) by this property
        (\textbf{cokernel colift}).
  \end{enumerate}
\end{definition}

\begin{remark}
Note that for morphisms $\alpha$, $\alpha'$ such that $\alpha = \alpha'$,
we do not require some ``naive'' equality between the terms
$\kernel( \alpha )$ and $\kernel( \alpha' )$,
but the kernel lift gives us a specific isomorphism.
\end{remark}

\begin{remark}
 Being a \textbf{monomorphism} for $\alpha \in \Hom_{\AC}(A,B)$ in a preabelian category $\AC$
 can be characterized by $\KernelEmbedding( \alpha ) = 0$.
 Dually, being an \textbf{epimorphism} can be characterized by $\CokernelProjection( \alpha ) = 0$.
\end{remark}

\begin{definition}
  An \textbf{abelian category} is a preabelian category $\AC$ for which we have:
  \begin{enumerate}
  \setcounter{enumi}{15}
   \item An algorithm that computes for a given monomorphism $\alpha \in  \Hom_{\AC}(A,B)$ and 
        given morphism $\tau \in \Hom_{\AC}(T,B)$ such that $\tau \cdot \CokernelProjection( \alpha ) = 0$
        the \textbf{lift along a monomorphism} $u \in \Hom_{\AC}( T,A )$ (i.e., $u \cdot \alpha = \tau$).
  \item An algorithm that computes for a given epimorphism $\alpha \in \Hom_{\AC}(A,B)$ and 
        given morphism $\tau \in \Hom_{\AC}(A,T)$ such that $\KernelEmbedding( \alpha ) \cdot \tau = 0$
        the \textbf{colift along an epimorphism} $u \in \Hom_{\AC}(B,T)$ (i.e., $\alpha \cdot u = \tau$).
 \end{enumerate}
\end{definition}

Since we encounter categories with decidable equality for morphisms
as well as with computationally undecidable equality for morphisms (see Remark \ref{remark:abelian_not_computable}),
we introduce a special definition.

\begin{definition}
 A category $\AC$ is called \textbf{computable}
 if we have an algorithm that decides for given $A,B \in \Obj_{\AC}$, $\alpha, \beta \in \Hom_{\AC}(A,B)$
 whether $\alpha = \beta$.
\end{definition}

Note that in \cite[Appendix B]{BL_GabrielMorphisms} decidable equality for morphisms is always assumed.
\end{appendix}

\def\cprime{$'$} \def\cprime{$'$} \def\cprime{$'$} \def\cprime{$'$}
  \def\cprime{$'$}
\providecommand{\bysame}{\leavevmode\hbox to3em{\hrulefill}\thinspace}
\providecommand{\MR}{\relax\ifhmode\unskip\space\fi MR }
\providecommand{\MRhref}[2]{%
  \href{http://www.ams.org/mathscinet-getitem?mr=#1}{#2}
}
\providecommand{\href}[2]{#2}

\end{document}